
\documentclass[a4paper,oneside,11pt, reqno, psamsfonts]{amsart}
\usepackage{graphicx}
\usepackage{float}
\usepackage{pstricks}
\usepackage{amscd}
\usepackage{amsmath}
\usepackage{amsxtra}
\usepackage[T1]{fontenc}
\usepackage[utf8]{inputenc}
\usepackage{lipsum}
\usepackage{tikz}
\usetikzlibrary{arrows}
\usetikzlibrary{calc}





\usepackage{amsfonts,amssymb,amsmath,amsthm}
\usepackage{url}
\usepackage{enumerate}

\newcommand{\strutstretchdef}{\newcommand{\strutstretch}{\vphantom{\raisebox{1pt}{$\big($}\raisebox{-1pt}{$\big($}}}}
\theoremstyle{plain}
\newtheorem{theorem}{Theorem}[section]

\newtheorem{lemma}[theorem]{Lemma}
\newtheorem{proposition}[theorem]{Proposition}
\newtheorem{corollary}[theorem]{Corollary}

\theoremstyle{definition}

\newtheorem*{problem*}{Problem}
\newtheorem{example}[theorem]{Example}

\theoremstyle{remark}
\newtheorem{remark}[theorem]{Remark}

\numberwithin{equation}{section}

\newlength{\struh}
\setlength{\struh}{16pt}
\newlength{\textminustop}
\setlength{\textminustop}{\textheight}
\addtolength{\textminustop}{-\topskip}

\strutstretchdef
\hyphenation{arc-length}








\usepackage{mathrsfs}
\usepackage{epsfig}
\usepackage[active]{srcltx}

\newcommand{\ncom}{\newcommand}
\ncom{\bq}{\begin{equation}}
\ncom{\eq}{\end{equation}}
\ncom{\beqn}{\begin{eqnarray*}}
\ncom{\eeqn}{\end{eqnarray*}}
\ncom{\beq}{\begin{eqnarray}}
\ncom{\eeq}{\end{eqnarray}}
\ncom{\nno}{\nonumber}
\ncom{\rar}{\rightarrow}
\ncom{\Rar}{\Rightarrow}
\ncom{\noin}{\noindent}
\ncom{\bc}{\begin{centre}}
\ncom{\ec}{\end{centre}}
\ncom{\sz}{\scriptsize}
\ncom{\rf}{\ref}
\ncom{\sgm}{\sigma}
\ncom{\Sgm}{\Sigma}
\ncom{\dt}{\delta}
\ncom{\Dt}{Delta}
\ncom{\lmd}{\lambda}
\ncom{\Lmd}{\Lambda}
\ncom{\eps}{\epsilon}
\ncom{\pcc}{\stackrel{P}{>}}
\ncom{\dist}{{\rm\,dist}}
\ncom{\sspan}{{\rm\,span}}
\ncom{\im}{{\rm Im\,}}
\ncom{\sgn}{{\rm sgn\,}}
\ncom{\ba}{\begin{array}}
\ncom{\ea}{\end{array}}
\ncom{\eop}{\hfill{{\rule{1.5mm}{1.5mm}}}}
\ncom{\eoe}{\hfill{{\rule{1.5mm}{1.5mm}}}}
\ncom{\eof}{\hfill{{\rule{1.5mm}{1.5mm}}}}
\ncom{\hone}{\mbox{\hspace{1em}}}
\ncom{\htwo}{\mbox{\hspace{2em}}}
\ncom{\hthree}{\mbox{\hspace{3em}}}
\ncom{\hfour}{\mbox{\hspace{4em}}}
\ncom{\hsev}{\mbox{\hspace{7em}}}
\ncom{\vone}{\vskip 2ex}
\ncom{\vtwo}{\vskip 4ex}
\ncom{\vonee}{\vskip 1.5ex}
\ncom{\vthree}{\vskip 6ex}
\ncom{\vfour}{\vspace*{8ex}}
\ncom{\norm}{\|\;\;\|}
\ncom{\integ}[4]{\int_{#1}^{#2}\,{#3}\,d{#4}}
\ncom{\inp}[2]{\langle{#1},\,{#2} \rangle}
\ncom{\Inp}[2]{\Langle{#1},\,{#2} \Langle}
\ncom{\vspan}[1]{{{\rm\,span}\#1 \}}}
\ncom{\dm}[1]{\displaystyle {#1}}

\begin{document}
\title[The Cauchy dual subnormality problem]{The Cauchy dual subnormality problem via \\ de Branges-Rovnyak spaces}

\author[S. Chavan, S. Ghara and Md. R. Reza]{Sameer Chavan, Soumitra Ghara and Md. Ramiz Reza}

\address{Department of Mathematics and Statistics\\
Indian Institute of Technology Kanpur, India}
   \email{chavan@iitk.ac.in}
 \email{sghara@iitk.ac.in}
 \email{ramiz@iitk.ac.in}


\thanks{The second author is supported by the SERB through the NPDF post-doctoral fellowship (Ref.No. PDF/2019/002724).}

\keywords{de Branges-Rovnyak space, Dirichlet-type spaces, Cauchy dual, subnormal, $2$-isometry}

\subjclass[2000]{Primary 47B32, 47B38; Secondary 44A60, 31C25}

\begin{abstract} The Cauchy dual subnormality problem (for short, CDSP) asks whether the Cauchy dual of a $2$-isometry is subnormal. In this paper, we address this problem for cyclic $2$-isometries.
In view of some recent developments in operator theory on function spaces (see \cite{AM, LGR}), one may recast CDSP as the problem of subnormality of the Cauchy dual $\mathscr M'_z$ of the multiplication operator $\mathscr M_z$ acting on a de Branges-Rovnyak space $\mathcal H(B),$ where $B$ is a vector-valued rational function. The main result of this paper characterizes the subnormality of $\mathscr M'_z$ on $\mathcal H(B)$ provided $B$ is a vector-valued rational function with simple poles. As an application, we provide affirmative solution to CDSP for the Dirichlet-type spaces $\mathscr D(\mu)$ associated with measures $\mu$ supported on two antipodal points of the unit circle.  
\end{abstract}

\maketitle

\section{Cauchy dual subnormality problem for $2$-isometries} The Cauchy dual subnormality problem (for short, CDSP) for $2$-isometries can be seen as a manifestation of the rich interplay between positive definite and negative definite functions on abelian semigroups. Indeed, CDSP can be considered as the non-commutative variant of the fact from the harmonic analysis on semigroups that the reciprocal of a Bernstein function $f : [0, \infty) \rar (0, \infty)$ is completely monotone (see \cite[Theorem 3.6]{SSV}). This fact turns out to be somewhat equivalent to the solution of CDSP for completely hyperexpansive weighted shifts (see \cite[Proposition 6]{At} for a generalization). Another early result 
towards the solution of CDSP asserts that the Cauchy dual of any concave operator is a hyponormal contraction (see \cite[Equation (26)]{S1}). Later this fact was generalized in \cite[Theorem 3.1]{Ch} by deducing power hyponormality of the Cauchy dual of any concave operator. Around the same time CDSP was settled affirmatively 
for $\Delta_T$-regular $2$-isometries in \cite[Theorem 3.4]{BS} and for $2$-isometric operator-valued weighted shifts in \cite[Theorems 2.5 and 3.3]{ACJS} (see also \cite[Corollary 6.2]{CJJS} for the solution for yet another subclass of $2$-isometries). Further, it was shown in \cite[Examples 6.6 and 7.10]{ACJS} that there exist $2$-isometric weighted shifts on directed trees (that include adjacency operators) whose Cauchy dual is not necessarily subnormal. Recently, a class of cyclic $2$-isometric composition operators without subnormal Cauchy dual has been exhibited in \cite[Theorem 4.4]{ACJS-1}. 

For a complex Hilbert space $\mathcal H,$ let $\mathcal B(\mathcal H)$ denote the $C^*$-algebra of bounded linear operators on $\mathcal H.$ A bounded linear operator $T$ in $\mathcal B(\mathcal H)$ is {\it cyclic} if there exists a vector $f \in \mathcal H,$  called a {\it cyclic vector} of $T,$ such that $$\bigvee \{T^n f : n \geqslant 0\}=\mathcal H.$$ We say that $T$ is {\it analytic} if $\cap_{n \geqslant 0}T^n\mathcal H = \{0\}.$
Following \cite{S}, the {\it Cauchy dual} $T'$ of a left-invertible $T \in \mathcal B(\mathcal H)$ is defined by $T'=T(T^*T)^{-1}.$  
An operator $T \in \mathcal B(\mathcal H)$ is a {\it $2$-isometry} if 
\beqn
I - 2T^*T + T^{*2}T^2 =0.
\eeqn
An operator $T$ is said to be a {\it concave operator} if $I - 2T^*T + T^{*2}T^2 \leqslant 0.$
Clearly, any $2$-isometry is a concave operator.
By \cite[Lemma 1]{R1}, a concave operator $T$ is {\it norm increasing} (that is, $T^*T \geqslant I$), and hence the Cauchy dual of $T$ is well-defined (see \cite{Ag-St, R1} for the basic properties of $2$-isometries and concave operators). An operator $T \in \mathcal B(\mathcal H)$ is said to be {\it subnormal} if  there exist a Hilbert space $\mathcal K$ containing $\mathcal H$ and a normal operator $N \in \mathcal B(\mathcal K)$ such that $Nx=Tx$ for every $x \in \mathcal H$ (the reader is referred to \cite{Co1} for an excellent exposition on the theory of subnormal operators). 
With these notions, one can now state 
the Cauchy dual subnormality problem. This problem asks {\it when is the Cauchy dual $T'$ of a cyclic $2$-isometry $T$ in $\mathcal B(\mathcal H)$ subnormal?}

For a finite positive Borel measure $\mu$ on the unit circle $\mathbb T,$ the {\it Dirichlet-type space} $\mathcal D(\mu)$ is defined by
$$D(\mu):= \left\{f\in \mathcal O(\mathbb D) : \int_{\mathbb D}|f^{\prime}(z)|^2 P_{\mu}(z) \,dA(z) < \infty\right\},$$ where
$P_{\mu}(z)$ denotes the \emph{Poisson integral} $\int_{\mathbb T} \frac{1-|z|^2}{|z-\zeta|^2}d\mu(\zeta)$ of the measure $\mu,$ $dA(z)$ denotes the \emph{normalized Lebesgue area measure} on the open unit disc $\mathbb D$  and $\mathcal O(\mathbb D)$ denotes the space of complex valued holomorphic functions on  $\mathbb D.$
It is well-known that any cyclic analytic $2$-isometry is unitarily equivalent to the operator $\mathscr M_z$ of multiplication by the coordinate function $z$ on a Dirichlet-type space $D(\mu)$ for some finite positive Borel measure $\mu$ on the unit circle $\mathbb T$  (see \cite[Theorems 3.7 and 5.1]{R1}).
In view of this model theorem and the fact that any $2$-isometry decomposes into a direct sum of an analytic $2$-isometry and a unitary (see \cite[Proposition 3.4]{S}), the forgoing problem is equivalent to the following variant of CDSP.
\begin{problem*}[CDSP] Classify all finite positive Borel measures $\mu$ on the unit circle $\mathbb T$ for which the Cauchy dual $\mathscr M'_z$ of the multiplication operator $\mathscr M_z$ on the Dirichlet-type space $\mathscr D(\mu)$ is subnormal.
\end{problem*}

The counter-example to CDSP, as constructed in \cite[Section 4]{ACJS-1}, is a cyclic $2$-isometry $T$ for which the range of $T^*T-I$ is of infinite dimension. This construction poses the following question: Whether or not CDSP has affirmative answer for all cyclic $2$-isometries for which $T^*T-I$ is of finite rank? It turns out that all such $2$-isometries arise as the multiplication operator $\mathscr M_z$ acting on a Dirichlet-type space associated with a positive finitely supported measure (see Theorem \ref{lem-finite-rank-rep}). Since every analytic norm increasing operator is unitarily equivalent to the multiplication operator $\mathscr M_z$ acting on a de Branges-Rovnyak space (see \cite[Theorem 4.6]{LGR} and Lemma \ref{expansion-pd}),
the above problem for Dirichlet-type spaces associated with finitely supported measures can be addressed by examining de Branges-Rovnyak spaces of finite rank. This is precisely the purpose of the present paper.

\section{Main Results}

For complex separable Hilbert spaces $\mathcal D$ and $\mathcal E,$ let 
$\mathcal B(\mathcal D, \mathcal E)$ denote the Banach space of bounded linear transformations from $\mathcal D$ into $\mathcal E.$ 
The {\it Schur class} $\mathcal S(\mathcal D, \mathcal E)$ is given by 
\beqn
\mathcal S(\mathcal D, \mathcal E) = \{B : \mathbb D \rar \mathcal B(\mathcal D, \mathcal E) \,|\, B\mbox{~is~holomorphic~with~}\sup_{z \in \mathbb D} \|B(z)\|_{_{\mathcal B(\mathcal D, \mathcal E)}} \leqslant 1\}. 
\eeqn  
In case $\mathcal D = \mathbb C = \mathcal E,$ the Schur class is equal to the closed unit ball of 
 the space $H^{\infty}(\mathbb D)$ of bounded holomorphic functions on the unit disc $\mathbb D$. 
For any $B \in \mathcal S(\mathcal D, \mathcal E),$  the {\it de Branges-Rovnyak space} $\mathcal H(B)$ is the reproducing kernel Hilbert space associated with the $\mathcal B(\mathcal E)$-valued positive semi-definite kernel 
\beqn 
\kappa_B(z, w) = \frac{I_{\mathcal E}-B(z)B(w)^*}{1-z\overline{w}}, \quad z, w \in \mathbb D.
\eeqn
The kernel $\kappa_B$ is {\it normalized} if $\kappa_B(z, 0)=I_{\mathcal E}$ for every $z \in \mathbb D$. This is equivalent to $B(0)=0.$ 
If $\mathcal D = \mathbb C = \mathcal E,$ then we refer to $\mathcal H(B),$ to be denoted by $\mathcal H(b),$ as the {\it classical} de Branges-Rovnyak space (refer to \cite{Sa-0} and \cite{FM} for the basic theory of classical de Branges-Rovnyak spaces). 
Following \cite[Definition 1.1]{AM},  the spaces $\mathcal H(B)$ are referred to as {\it finite rank de Branges-Rovnyak spaces} provided $\mathcal D$ is finite dimensional and $\mathcal E$ is of dimension $1$. More generally, for a positive integer $k$ and $B \in 
\mathcal S({\mathcal D}, \mathcal E),$
the de Branges-Rovnyak space $\mathcal H(B)$ is said to be of of 
{\it rank} $k$ if 
\beqn
 k ~=& \inf \big\{  \dim \,\tilde{\mathcal D} : \mbox{there exist a Hilbert space}~\tilde{\mathcal D} ~\mbox{and~}\tilde{B} \in 
\mathcal S(\tilde{\mathcal D}, \mathcal E) \\
& \mbox{such that~} \mathcal H(B) = \mathcal H(\tilde{B})~\mbox{with equality of norms}\big\},
\eeqn
where $\dim$ denotes the Hilbert space dimension.
Assume that $\mathcal D =\mathbb C^k$ and $\mathcal E =\mathbb C.$ Then, for $B=(b_1, \ldots, b_k) \in \mathcal S(\mathcal D, \mathcal E)$, $z$ is a {\it multiplier}, that is, $zf \in \mathcal H(B)$ whenever $f \in \mathcal H(B)$ if and only if $$\int_{\mathbb T} \log\big(1-\textstyle \sum_{j=1}^{k}|\tilde{b}_j|^2\big)d\theta > -\infty,$$ 
where $d\theta$ denotes the normalized Lebesgue measure on the unit circle and $\tilde{b}_j(\zeta)$ denotes the non-tangential boundary value of $b_j$ at $\zeta \in \mathbb T$ 
(see \cite[Theorem 5.2]{AM} and \cite[Corollary 20.20]{FM}). In case of $k=1,$ by de Leeuw–Rudin theorem, $z$ is a multiplier if and only if $b$ is a non-extreme point of the unit ball of $H^{\infty}(\mathbb D)$ (see \cite[Theorem 12]{LR}).

Let $B \in \mathcal S(\mathcal D, \mathcal E).$
If $z$ is a multiplier, then by the closed graph theorem, the operator $\mathscr M_z$ of multiplication by $z$ on $\mathcal H(B)$ is bounded. In this case, $\mathscr M_z$ is always a norm increasing operator, that is, 
\beq \label{n-increasing}
\|\mathscr M_zf\| \geqslant \|f\|, \quad f \in \mathcal H(B)
\eeq
(see \cite[Theorem 2.1]{BB} and 
\cite[Pg 17]{LGR}). Conversely, every analytic norm increasing operator is unitarily equivalent to the operator $\mathscr M_z$ of multiplication by $z$ acting on a de Branges-Rovnyak space $\mathcal H(B)$ for some Schur-class function $B$  (see \cite[Theorem 4.6]{LGR} and Lemma \ref{expansion-pd}).

Here is the main result of this paper concerning the CDSP for a family of finite rank de Branges-Rovnyak spaces.
\begin{theorem} \label{higher-r-thm}
Let $B=(b_1, \ldots, b_k) \in \mathcal S(\mathbb C^k, \mathbb C)$ be such that $B(0)=0,$ where $$b_j(z) =\frac{p_j(z)}{\prod_{j=1}^k (z-\alpha_j)}$$ for polynomials $p_j$ of degree at most $k$ and distinct numbers $\alpha_1, \cdots, \alpha_k$ in $\mathbb C \setminus \overline{\mathbb D}.$
For $r=1, \ldots, k,$
let $a_r=  \prod_{1 \leq t \neq r \leq k}(\alpha_r - \alpha_t).$ 
 Assume that the operator $\mathscr M_z$ of multiplication by $z$ on the de Branges-Rovnyak space $\mathcal H(B)$ is bounded. 
Then the Cauchy dual $\mathscr M'_z$ of $\mathscr M_z$ is subnormal if and only if the matrix 
\beqn
\sum_{r, t=1}^k  \Big(\frac{1}{a_r \overline{a_t}}   \sum_{j=1}^k p_{j}(\alpha_r) \overline{p_{j}(\alpha_t)} \Big) (1-\frac{1}{\alpha_r \overline{\alpha}_t})^l \Big(\!\Big(\frac{1}{\alpha^{m+2}_r \overline{\alpha}^{n+2}_t}\Big)\!\Big)_{m, n \geqslant 0} ~\mbox{~}
\eeqn
is formally positive semi-definite for every  $l \geqslant 1.$
\end{theorem}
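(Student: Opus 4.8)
My plan is to turn the subnormality of $\mathscr M'_z$ into a family of operator-positivity conditions via Agler's theory of hypercontractions, carry them over to the reproducing kernel $\kappa_B$, and then collapse the resulting kernels to the displayed finite matrices by partial fractions at the poles $\alpha_1,\dots,\alpha_k$. \textbf{Step 1 (reduction to complete hypercontractivity).} By \eqref{n-increasing} we have $\mathscr M_z^*\mathscr M_z\geq I$, hence $\mathscr M'_z=\mathscr M_z(\mathscr M_z^*\mathscr M_z)^{-1}$ satisfies $(\mathscr M'_z)^*\mathscr M'_z=(\mathscr M_z^*\mathscr M_z)^{-1}\leq I$, so $\mathscr M'_z$ is a contraction. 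By Agler's characterization a contraction is subnormal if and only if it is completely hypercontractive; thus $\mathscr M'_z$ is subnormal if and only if for every $l\geq1$ the operator
\[ B_l:=\sum_{j=0}^{l}(-1)^{j}\binom{l}{j}\,(\mathscr M'_z)^{*j}(\mathscr M'_z)^{j} \]
is nonnegative. Since the functions $\kappa_B(\cdot,w)$, $w\in\mathbb D$, span a dense subspace of $\mathcal H(B)$, the inequality $B_l\geq0$ is equivalent to positive semi-definiteness of the scalar kernel
\[ L_l(z,w):=\langle B_l\kappa_B(\cdot,w),\kappa_B(\cdot,z)\rangle=\sum_{j=0}^{l}(-1)^{j}\binom{l}{j}\bigl\langle(\mathscr M'_z)^{j}\kappa_B(\cdot,w),(\mathscr M'_z)^{j}\kappa_B(\cdot,z)\bigr\rangle \]
on $\mathbb D$, and since $L_l(\cdot,w)=B_l\kappa_B(\cdot,w)\in\mathcal H(B)$ is holomorphic, this is in turn equivalent to positive semi-definiteness of the matrix of its Taylor coefficients at the origin. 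So it remains to evaluate $L_l$.

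\textbf{Step 2 (action on kernels; partial fractions).} Since $B(0)=0$ the kernel is normalized, $\kappa_B(z,0)\equiv1$, so $\mathbf 1\in\mathcal H(B)$ with $\|\mathbf 1\|=1$, $\langle f,\mathbf 1\rangle=f(0)$, and $\ker\mathscr M_z^*=\mathcal H(B)\ominus z\mathcal H(B)=\mathbb C\mathbf 1$. Combining $\mathscr M_z(\mathscr M_z^*\mathscr M_z)^{-1}\mathscr M_z^*=P_{z\mathcal H(B)}=I-\langle\cdot,\mathbf 1\rangle\mathbf 1$ with $\mathscr M_z^*\kappa_B(\cdot,w)=\overline{w}\,\kappa_B(\cdot,w)$ yields
\[ \mathscr M'_z\kappa_B(\cdot,w)=\tfrac1{\overline{w}}\bigl(\kappa_B(\cdot,w)-\mathbf 1\bigr),\qquad w\in\mathbb D\setminus\{0\}, \]
and iterating gives a closed formula for $(\mathscr M'_z)^{j}\kappa_B(\cdot,w)$ as a combination of $\kappa_B(\cdot,w)$ and the fixed vectors $e_i:=(\mathscr M'_z)^{i}\mathbf 1$, $0\leq i<j$, with coefficients that are powers of $1/\overline{w}$. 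On the other side, the partial fraction identity $\tfrac1{\prod_{s=1}^{k}(z-\alpha_s)}=\sum_{r=1}^{k}\tfrac1{a_r(z-\alpha_r)}$ gives $b_j(z)=\sum_{r=1}^{k}\tfrac{p_j(\alpha_r)}{a_r}\cdot\tfrac{z}{\alpha_r(z-\alpha_r)}$, hence
\[ B(z)B(w)^*=\sum_{r,t=1}^{k}\Bigl(\tfrac1{a_r\overline{a_t}}\textstyle\sum_{j=1}^{k}p_j(\alpha_r)\overline{p_j(\alpha_t)}\Bigr)\frac{z\,\overline{w}}{\alpha_r\overline{\alpha}_t(z-\alpha_r)(\overline{w}-\overline{\alpha}_t)}, \]
while on $\mathbb D$ one has $\tfrac{z}{\alpha_r(z-\alpha_r)}=-\sum_{m\geq0}\tfrac{z^{m+1}}{\alpha_r^{\,m+2}}$.

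\textbf{Step 3 (collapsing $L_l$).} Substituting the formula of Step 2 for the vectors $(\mathscr M'_z)^{j}\kappa_B(\cdot,\cdot)$, together with the expansions above, into $L_l$ and collecting terms, the alternating binomial sum over $j$ achieves two things. Using $\sum_{j=0}^{l}(-1)^{j}\binom lj=0$ for $l\geq1$, it annihilates the Szegő-type contributions carried by $\kappa_B$ (the vectors $e_i$ being precisely what restores holomorphy in the intermediate steps); and within the $(r,t)$-block of the partial fraction decomposition it produces the scalar factor $\sum_{j=0}^{l}(-1)^{j}\binom{l}{j}(\alpha_r\overline{\alpha}_t)^{-j}=\bigl(1-\tfrac1{\alpha_r\overline{\alpha}_t}\bigr)^{l}$. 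The remaining terms recombine into the rank-one tails $\bigl(\!\bigl(\alpha_r^{-(m+2)}\overline{\alpha}_t^{-(n+2)}\bigr)\!\bigr)_{m,n\geq0}$ weighted by $\tfrac1{a_r\overline{a_t}}\sum_{j}p_j(\alpha_r)\overline{p_j(\alpha_t)}$; that is, up to re-indexing the matrix of Taylor coefficients of $L_l$ coincides with the displayed matrix. Hence $B_l\geq0$ for every $l\geq1$ if and only if the displayed matrix is formally positive semi-definite for every $l\geq1$, which by Step 1 is equivalent to the subnormality of $\mathscr M'_z$.

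\textbf{Main obstacle.} Step 1 and the partial fraction identities are routine; the technical heart is the bookkeeping in Step 3. One has to track every cross term in $L_l(z,w)$ produced by the vectors $e_i=(\mathscr M'_z)^i\mathbf 1$ — in particular to establish closed forms for $\langle e_i,e_{i'}\rangle$ and $\langle\kappa_B(\cdot,w),e_i\rangle$ in terms of the data $\{\alpha_r,\ p_j(\alpha_r)/a_r\}$ — and to check that these terms conspire, first to restore holomorphy, and then (after the sum over $j$) to cancel the Szegő kernel $\tfrac1{1-z\overline{w}}$ hidden inside $\kappa_B$, leaving only the rank-$\leq k$ rational tail with the clean factor $\bigl(1-\tfrac1{\alpha_r\overline{\alpha}_t}\bigr)^{l}$. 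Keeping the poles, the residue weights $p_j(\alpha_r)/a_r$, and the binomial coefficients correctly aligned throughout this computation is where the real work lies.
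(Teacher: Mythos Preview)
Your overall architecture is right and in fact coincides with the paper's: reduce subnormality of $\mathscr M'_z$ to Agler's complete hypercontractivity, convert this into positive semi-definiteness of the coefficient matrix of a family of kernels indexed by $l$, and then exploit the partial-fraction expansion of $b_j$ at the poles $\alpha_r$ to extract the factor $(1-\tfrac{1}{\alpha_r\overline{\alpha}_t})^{l}$. Step~1 and the partial-fraction identities in Step~2 are correct.

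Where you diverge from the paper is in the organization of Step~3, and this is also where your proposal stops being a proof and becomes a plan. You choose to iterate the formula $\mathscr M'_z\kappa_B(\cdot,w)=\overline{w}^{-1}(\kappa_B(\cdot,w)-\mathbf 1)$, which forces you to carry the correction vectors $e_i=(\mathscr M'_z)^i\mathbf 1$ through the whole computation, compute $\langle e_i,e_{i'}\rangle$ and $\langle \kappa_B(\cdot,w),e_i\rangle$ in closed form, and then argue that the non-holomorphic factors $(z\overline w)^{-j}$ and the Szeg\H{o} piece both disappear after summing in $j$. You yourself flag this bookkeeping as the ``main obstacle,'' and it is not carried out.

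The paper bypasses this entirely by working at the level of Taylor coefficients from the start. Writing $c_{m,n}=\frac{\partial^{m}\overline\partial^{n}\kappa_B(0,0)}{m!\,n!}$ and $B(z)=\sum_{j\geq 1}B_jz^j$, one reads off $c_{m,n}$ directly from $\kappa_B=(1-B B^*)/(1-z\overline w)$, and a two-line induction on $l$ yields the clean identity
\[
\sum_{j=0}^{l}(-1)^{j}\binom{l}{j}\,c_{m+j,\,n+j}\;=\;\sum_{j=0}^{l-1}(-1)^{j}\binom{l-1}{j}\,B_{m+1+j}\,B^*_{n+1+j}.
\]
This single formula replaces all of your cross-term tracking: the Szeg\H{o} contribution $\delta_{m,n}$ drops out at $l=1$ (the first forward difference of a constant is zero), and what survives is expressed purely through the Taylor coefficients $B_j$ of $B$, with no reference to the auxiliary vectors $e_i$ or to $\kappa_B(\cdot,w)$. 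Only after this general reduction does one substitute the partial-fraction expression $B_{m+1+s}B^*_{n+1+s}=\sum_{r,t}\big(\tfrac{1}{a_r\overline{a_t}}\sum_j p_j(\alpha_r)\overline{p_j(\alpha_t)}\big)\alpha_r^{-(m+2+s)}\overline{\alpha}_t^{-(n+2+s)}$, at which point the binomial sum over $s$ collapses to $(1-\tfrac{1}{\alpha_r\overline{\alpha}_t})^{l}$ by the binomial theorem.

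So the gap is not conceptual but executional: your Step~3 is asserted rather than proved, and the route you propose for it is substantially harder than necessary. The fix is to abandon the kernel-iteration bookkeeping and instead prove the coefficient identity above by induction; everything else in your outline then goes through.
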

\begin{remark}
One of the essential ingredients in the proof of Theorem \ref{higher-r-thm} is Theorem \ref{main-thm-0}. The latter theorem can be used to extend Theorem \ref{higher-r-thm} to the case in which the rational symbol $B$ admits a pole of multiplicity bigger than $1.$  However, the characterization in this case is not as elegant as we get when $B$ admits simple poles.
\end{remark}

As the first application of Theorem \ref{higher-r-thm}, we obtain a handy criterion for the subnormality of the Cauchy dual of the multiplication operator under consideration.
\begin{corollary} \label{coro-p-zero}
Assume that  the hypotheses of Theorem \ref{higher-r-thm} hold. If
\beq \label{orthogonality-p}
\sum_{j=1}^k p_j(\alpha_r) \overline{p_j(\alpha_t)} = 0, \quad 1 \leqslant r \neq t \leqslant k,
\eeq
then $\mathscr M'_z$ is subnormal.
\end{corollary}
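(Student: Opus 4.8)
The plan is to derive the corollary directly from Theorem \ref{higher-r-thm} by showing that the orthogonality hypothesis \eqref{orthogonality-p} forces the matrix appearing in Theorem \ref{higher-r-thm} to be formally positive semi-definite for every $l \geqslant 1$. The first step is to substitute \eqref{orthogonality-p} into that matrix: since $\sum_{j=1}^k p_j(\alpha_r)\overline{p_j(\alpha_t)}=0$ whenever $r \neq t$, all the off-diagonal ($r \neq t$) summands vanish, and the matrix collapses to the diagonal sum
\beqn
\sum_{r=1}^k \frac{1}{|a_r|^2}\Big(\sum_{j=1}^k |p_j(\alpha_r)|^2\Big)\Big(1-\frac{1}{|\alpha_r|^2}\Big)^l \Big(\!\Big(\frac{1}{\alpha_r^{m+2}\,\overline{\alpha}_r^{n+2}}\Big)\!\Big)_{m,n \geqslant 0}.
\eeqn

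The second step is to check that each summand is a nonnegative scalar multiple of a formally positive semi-definite matrix. Indeed, $|a_r|^{-2} \geqslant 0$ and $\sum_{j=1}^k |p_j(\alpha_r)|^2 \geqslant 0$ trivially; since $\alpha_r \in \mathbb C \setminus \overline{\mathbb D}$ we have $|\alpha_r|>1$, so $1-|\alpha_r|^{-2} \in (0,1)$ and hence $(1-|\alpha_r|^{-2})^l > 0$ for every $l \geqslant 1$; and the matrix $\big(\!\big(\alpha_r^{-(m+2)}\,\overline{\alpha}_r^{-(n+2)}\big)\!\big)_{m,n\geqslant 0}$ is the Gram matrix $v_r v_r^*$ of the vector $v_r=(\alpha_r^{-2}, \alpha_r^{-3}, \ldots)$, hence formally positive semi-definite: for any finitely supported sequence $(c_m)$ one has $\sum_{m,n} c_m \overline{c_n}\,\alpha_r^{-(m+2)}\,\overline{\alpha}_r^{-(n+2)} = \big|\sum_m c_m \alpha_r^{-(m+2)}\big|^2 \geqslant 0$.

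Finally, a finite sum of formally positive semi-definite matrices is again formally positive semi-definite, so the displayed matrix is formally positive semi-definite for every $l \geqslant 1$, and Theorem \ref{higher-r-thm} yields that $\mathscr M'_z$ is subnormal. There is essentially no obstacle here; the only point worth attention is the location of the poles, as the hypothesis $\alpha_r \notin \overline{\mathbb D}$ is precisely what makes the scalar factor $(1-|\alpha_r|^{-2})^l$ positive (rather than merely nonzero), while any index $r$ with $\sum_{j=1}^k |p_j(\alpha_r)|^2=0$ simply contributes a zero term and does not affect the conclusion.
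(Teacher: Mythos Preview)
Your proof is correct and follows essentially the same approach as the paper: both use the orthogonality hypothesis to collapse the double sum in Theorem~\ref{higher-r-thm} to its diagonal terms, observe that $|\alpha_r|>1$ makes the scalar factor $(1-|\alpha_r|^{-2})^l$ nonnegative, and recognize the remaining matrix as a rank-one Gram matrix $v_r v_r^*$. The paper's version is slightly terser but the argument is identical.
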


The following theorem provides an affirmative solution to CDSP for the Dirichlet-type spaces $\mathscr D(\mu)$ associated with measures $\mu$ supported on two antipodal points of the unit circle.
\begin{theorem} \label{rank-2-zeta}
For $\zeta \in \mathbb T$ and nonnegative numbers  $c_1,$ $c_2,$ let $\mathscr D(\mu)$ be the Dirichlet-type space  associated with the measure $\mu=c_1 \delta_{\zeta} + c_2 \delta_{-\zeta}$ and let $\mathscr M_z$ be the multiplication operator on $\mathscr D(\mu).$ Then the Cauchy dual $\mathscr M'_z$ of $\mathscr M_z$ is a subnormal contraction. 
\end{theorem}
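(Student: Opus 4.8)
The plan is to realize $\mathscr M_z$ on $\mathscr D(\mu)$ as the multiplication operator on a rank-$2$ de Branges--Rovnyak space $\mathcal H(B)$ with $B=(b_1,b_2)$ a vector-valued rational function having two simple poles, and then apply Theorem \ref{higher-r-thm} (together with Corollary \ref{coro-p-zero}). First I would recall that for a finitely supported measure $\mu = c_1\delta_\zeta + c_2\delta_{-\zeta}$, the space $\mathscr D(\mu)$ carries the norm $\|f\|^2 = \|f\|^2_{H^2} + c_1 D_\zeta(f) + c_2 D_{-\zeta}(f)$ where $D_\zeta$ is the local Dirichlet integral at $\zeta$; by the model theory invoked in the introduction (Theorem \ref{lem-finite-rank-rep} and \cite[Theorem 4.6]{LGR}), $\mathscr M_z$ on $\mathscr D(\mu)$ is unitarily equivalent to $\mathscr M_z$ on $\mathcal H(B)$ for a suitable $B$, and because $\mu$ is supported on two points one expects $B$ to have rank $2$ with poles at $1/\overline\zeta$ and $-1/\overline\zeta$. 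I would pin down $B$ explicitly: up to the normalization $B(0)=0$ one can take $b_j$ built from the elementary reproducing-kernel data of the two point masses, so that $\alpha_1 = 1/\overline\zeta$, $\alpha_2 = -1/\overline\zeta$ (both of modulus $>1$, hence in $\mathbb C\setminus\overline{\mathbb D}$ as required), and $p_1,p_2$ polynomials of degree at most $2$ whose values at $\alpha_1,\alpha_2$ are controlled by $c_1,c_2$.

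The key observation that makes the proof short is that the two poles $\alpha_1,\alpha_2$ are antipodal, $\alpha_2 = -\alpha_1$, and the point masses at $\zeta$ and $-\zeta$ are "decoupled": the local Dirichlet integral at $\zeta$ only sees the reproducing data at $\zeta$, so in the explicit description of $B$ the vector $(p_1(\alpha_1), p_2(\alpha_1))$ should be (a scalar multiple of) the first coordinate direction and $(p_1(\alpha_2), p_2(\alpha_2))$ the second, forcing the cross term
\[
\sum_{j=1}^{2} p_j(\alpha_1)\overline{p_j(\alpha_2)} = 0 .
\]
This is exactly the orthogonality hypothesis \eqref{orthogonality-p} of Corollary \ref{coro-p-zero} in the case $k=2$, and that corollary then immediately yields subnormality of $\mathscr M'_z$. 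So the bulk of the work is the bookkeeping that establishes this orthogonality from the antipodal structure of $\mu$. I would verify it by computing the $2\times 2$ Gram-type matrix $\big(\sum_j p_j(\alpha_r)\overline{p_j(\alpha_t)}\big)_{r,t}$ directly from the formula for the reproducing kernel of $\mathscr D(c_1\delta_\zeta + c_2\delta_{-\zeta})$ — which is a rank-$2$ perturbation of the Szegő kernel, with the perturbation splitting as a sum of two rank-one pieces supported "at $\zeta$" and "at $-\zeta$" — and reading off that the off-diagonal entry vanishes. Finally, to get the stronger conclusion that $\mathscr M'_z$ is a \emph{contraction}, I would invoke the general fact, recalled in the introduction via \cite[Equation (26)]{S1} and \cite[Theorem 3.1]{Ch}, that the Cauchy dual of any concave operator (in particular of a $2$-isometry) is a hyponormal contraction; since $\mathscr M_z$ on $\mathscr D(\mu)$ is a $2$-isometry, $\|\mathscr M'_z\|\le 1$ for free, so combining with subnormality gives that $\mathscr M'_z$ is a subnormal contraction.

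The main obstacle I anticipate is not the positivity check — Corollary \ref{coro-p-zero} trivializes it once orthogonality is in hand — but rather making the passage from $\mathscr D(\mu)$ to the concrete $\mathcal H(B)$ fully explicit: one must produce $B=(b_1,b_2)\in\mathcal S(\mathbb C^2,\mathbb C)$ with $B(0)=0$, with exactly the two simple poles $\pm 1/\overline\zeta$, and with $\mathcal H(B) = \mathscr D(\mu)$ isometrically, and then identify the polynomials $p_j$ precisely enough to evaluate $p_j(\alpha_r)$. There may also be a degenerate subtlety when $c_1 = 0$ or $c_2 = 0$ (so $\mu$ is a single point mass and the rank drops to $1$), which I would handle separately, either by a direct reference to the known rank-$1$ results or by a limiting argument; and one should check the boundedness hypothesis "$\mathscr M_z$ on $\mathcal H(B)$ is bounded" holds here, which follows since $\mathscr M_z$ on $\mathscr D(\mu)$ is a bounded $2$-isometry, but it is worth stating via the multiplier criterion recalled before Theorem \ref{higher-r-thm}.
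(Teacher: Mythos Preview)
Your overall strategy --- pass from $\mathscr D(\mu)$ to a rank-$2$ de Branges--Rovnyak space $\mathcal H(B)$, verify the orthogonality condition \eqref{orthogonality-p}, and invoke Corollary~\ref{coro-p-zero}, while treating the degenerate cases $c_1=0$ or $c_2=0$ separately --- is exactly the route the paper takes. But the concrete identification you propose for the symbol $B$ is wrong, and the error propagates through the rest of the argument.

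First, the poles cannot be $\alpha_1 = 1/\overline{\zeta}$ and $\alpha_2 = -1/\overline{\zeta}$: since $\zeta \in \mathbb T$, these numbers have modulus $1$, not modulus $>1$, so they do not lie in $\mathbb C \setminus \overline{\mathbb D}$ as required in Theorem~\ref{higher-r-thm}. The actual poles are determined by a Fej\'er--Riesz-type factorization (this is Theorem~\ref{Diri-de B} and equation~\eqref{Costara} in the paper): after rotating to $\zeta=1$ via Proposition~\ref{mu-zeta}, one must factor
\[
|z-1|^2|z+1|^2 + c_1|z+1|^2 + c_2|z-1|^2 = \gamma\,|z-\alpha_1|^2|z-\alpha_2|^2 \quad (z \in \mathbb T),
\]
and the resulting $\alpha_1,\alpha_2$ are real with $\alpha_1>1$, $\alpha_2<-1$, depending nontrivially on $c_1,c_2$ (see \eqref{gamma-alpha-1}). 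In particular $\alpha_2 = -\alpha_1$ only when $c_1 = c_2$, so your ``antipodal poles'' premise fails in general.

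Second, and more importantly, the orthogonality $\sum_{j} p_j(\alpha_1)\overline{p_j(\alpha_2)} = 0$ is \emph{not} a formal consequence of the two point masses being decoupled; the vectors $(p_1(\alpha_r),p_2(\alpha_r))$ are not aligned with coordinate axes. In the paper this orthogonality is obtained by a genuine computation (Example~\ref{anti-exam}): one uses Remark~\ref{ortho-rmk} to rewrite the condition as the scalar identity~\eqref{ortho-fact-new}, then evaluates the $2\times 2$ Gram matrix $(\!(\langle f_i,f_j\rangle)\!)$ and its inverse explicitly via Costara's formulas for the reproducing kernel, and checks that the resulting sum collapses to $1/(\alpha_1\alpha_2-1)$. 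So the ``bookkeeping'' you allude to is the entire content of the proof, and it requires the correct $\alpha_1,\alpha_2$ from the outset.
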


Here is the layout of the paper. In Section 3, we consider the Cauchy dual subnormality problem in the set-up of functional Hilbert spaces  and apply its solution  to general de Branges-Rovnyak spaces (see Theorems \ref{EFCDSP} and \ref{main-thm-0}). 
In Section 4, we prove Theorem \ref{higher-r-thm} and Corollary \ref{coro-p-zero}, as stated above, and discuss some of their consequences (see  
Corollaries \ref{coro-posi} and \ref{coro-p-zero-iff}). In Section 5, we provide a complete solution to the CDSP for the classical de Branges-Rovnyak spaces (see Theorem \ref{main-thm}).
In Section 6, we characterize all cyclic analytic $2$-isometries $T$ for which the defect operator $\Delta_T:=T^*T-I$ is of finite rank. This result allows us to classify all $\Delta_T$-regular operators in the class of cyclic analytic $2$-isometries (see Theorem \ref{lem-finite-rank-rep}). Further, we identify the Dirichlet-type spaces associated with finitely supported measures with a de Branges-Rovnyak space $\mathcal H(B)$ and describe an algorithm to compute the symbol $B$ in this case (see Theorem \ref{Diri-de B}).
In Section 7, we  
apply the results obtained in the previous sections to  prove Theorem \ref{rank-2-zeta}. 
In this analysis, we arrive at a precise formula for the symbol $B$ of the rank $2$ de Branges-Rovnyak space $\mathcal H(B),$  which coincides with the Dirichlet-type space associated with a measure supported at anti-podal points (see Proposition \ref{Sarason-2}).

\section{The Cauchy dual subnormality problem in de Branges-Rovnyak spaces}


Let $\mathcal E$ be an auxiliary complex separable Hilbert space.
Let $\mathscr H_\kappa$ be a reproducing kernel Hilbert space of $\mathcal E$-valued holomorphic functions defined on the unit disc and let $\kappa : \mathbb D \times \mathbb D \rar B(\mathcal E)$ be the reproducing kernel for $\mathscr H_\kappa,$ that is, $\kappa(\cdot, w)x \in \mathscr H_\kappa$ and 
\beqn
\inp{f}{\kappa(\cdot, w)x}_{\mathscr H_\kappa} = \inp{f(w)}{x}_{\mathcal E}, \quad f \in \mathscr H_\kappa, ~w \in \mathbb D, ~x \in \mathcal E.
\eeqn
In the remaining part of this section, we assume the following:
\begin{enumerate}
\item[(A1)] $z$ is a multiplier for $\mathscr H_\kappa,$ that is, $zf \in \mathscr H_\kappa$ for every $f \in \mathscr H_\kappa,$
\item[(A2)] $\kappa$ is normalized at the origin, that is, $\kappa(z, 0)=I_\mathcal E$ for every $z \in \mathbb D,$
\item[(A3)] Under the assumptions (A1) and (A2), the orthogonal complement of $\{zf : f \in \mathscr H_\kappa\}$ is spanned by the space of $\mathcal E$-valued constant functions.
\item[(A4)] Under the assumption (A1), $\mathscr M_z$ is left-invertible.
\end{enumerate} 
We refer to the pair $(\mathscr H_\kappa, \mathcal E)$ satisfying (A1)-(A4) as the {\it functional Hilbert space}. The reader is referred to \cite{CS, BCR, AM, SSV, Sz} for the basics of vector-valued reproducing kernel Hilbert spaces, positive semi-definite kernels, Hausdorff moment problem, complex moment problem, completely monotone functions and related notions.
\begin{remark} \label{rmk-analytic}
By (A1) and the closed graph theorem, the operator $\mathscr M_z$ of multiplication by $z$ on $\mathscr H_\kappa$ is bounded. Further, since $\mathscr H_\kappa$ consists of $\mathcal E$-valued holomorphic functions, $\mathscr M_z$ is analytic.  
It is also clear from (A2) and (A3) that 
$\ker \mathscr M^*_z$ is spanned by the space of $\mathcal E$-valued constant functions and $\|x\|_{\mathscr H_\kappa} = \|x\|_{\mathcal E}$ for every $x \in \mathcal E.$ The {\it  backward shift operator}
 $\mathscr L_z$ given by 
\beqn
(\mathscr L_zf)(w) = \frac{f(w)-f(0)}{w}, \quad f \in \mathscr H_\kappa,~ w \in \mathbb D
\eeqn
is a well-defined bounded linear operator 
on $\mathscr H_\kappa$ that satisfies
\beq
\label{Lz-Cauchy}
\mathscr M'^*_z = \mathscr L_z.
\eeq
Indeed, in view of the closed graph theorem, it suffices to check that $\mathscr L_zf$ belongs to $\mathscr H_\kappa$ for every $f \in \mathscr H_\kappa.$ Since $\mathscr M_z$ is left-invertible, the range of $\mathscr M_z$ is closed. By (A2) and (A3), $$\{zf : f \in \mathscr H_\kappa\}=\mathscr H_\kappa \ominus \{\kappa(\cdot, 0)x : x \in \mathcal E\}=\{g \in \mathscr H_\kappa : g(0)=0\}.$$
This shows that $\mathscr L_z$ is bounded. 
Moreover, $\mathscr M'^*_z \mathscr M_z = I=\mathscr L_z \mathscr M_z.$ 
This together with $\mathscr M'^*_z x = 0=\mathscr L_z x,$ $x \in \mathcal E,$ yields \eqref{Lz-Cauchy}. 
\end{remark}

For a nonnegative integer $n,$ let $e_n : \mathbb D \rar B(\mathcal E)$ be given by
\beqn
e_n(z)x = \frac{\overline{\partial}^n \kappa(z, w)x}{n!}\Big|_{w=0}, \quad z \in \mathbb D, ~x \in \mathcal E,
\eeqn
where $\overline{\partial}^n\kappa(z, w)$ denotes the $n$th partial derivative of $\kappa(z, w)$ with respect to $\overline{w}.$ After interchanging the roles of analytic and coanalytic functions, it is easy to deduce from \cite[Lemma 4.1]{CS} that 
\beq \label{ip-with-en}
e_n(\cdot)x  \in \mathscr H_\kappa ~\mbox{and~} \inp{f}{e_n(\cdot)x} = \Big\langle \frac{({\partial}^nf)(0)}{n!},\, x  \Big \rangle, \quad f \in \mathscr H_\kappa, ~x \in \mathcal E.
\eeq
For integers $m, n \geqslant 0,$ we use the short notation ${\partial^{m}\overline{\partial}^{n} \kappa(0, 0)}$ to denote ${\partial^{m}\overline{\partial}^{n} \kappa(w, w)}|_{w=0}.$
Although the following is well-known (see \cite[Lemma 4.1]{CS} and \cite[Remark 2]{BKM}), 
we include a proof for the sake of completeness. 
\begin{proposition} \label{pd-k-m}
Let $\kappa : \mathbb D \times \mathbb D \rar B(\mathcal E)$ be a kernel function given by 
\beqn
\kappa(z, w) = \sum_{m, n =0}^{\infty} A_{m, n} z^m \overline{w}^n, \quad z, w \in \mathbb D,
\eeqn
where $A_{m, n} \in B(\mathcal E).$ Then $\kappa$ is a positive semi-definite kernel if and only if the matrix $A=\Big(\!\!\Big(A_{m, n}\Big)\!\!\Big)_{m, n \geqslant 0}$ is formally positive semi-definite, that is, $\Big(\!\!\Big(A_{m, n}\Big)\!\!\Big)_{0 \leqslant m, n \leqslant k}$ is  positive semi-definite for every nonnegative integer $k.$
\end{proposition}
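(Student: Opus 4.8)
The plan is to prove both implications by relating the positive semi-definiteness of the kernel $\kappa$ on finite subsets of $\mathbb{D}$ to the formal positive semi-definiteness of the coefficient matrix $A = ((A_{m,n}))_{m,n \geqslant 0}$.

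\textbf{Sufficiency.} Suppose $A$ is formally positive semi-definite. Fix points $z_1, \dots, z_N \in \mathbb{D}$ and vectors $x_1, \dots, x_N \in \mathcal{E}$. I would compute
\beqn
\sum_{i, j=1}^N \inp{\kappa(z_i, z_j)x_j}{x_i}_{\mathcal E} = \sum_{i,j=1}^N \sum_{m, n=0}^\infty \inp{A_{m,n} x_j}{x_i}_{\mathcal E}\, z_i^m \overline{z_j}^n.
\eeqn
Since each $|z_i| < 1$, the double series converges absolutely, so I may interchange the order of summation and write the right-hand side as $\lim_{k \to \infty} \sum_{m,n=0}^k \inp{A_{m,n}\big(\sum_j \overline{z_j}^n x_j\big)}{\big(\sum_i \overline{z_i}^m x_i\big)}$... more carefully, setting $y^{(k)}_m = \sum_{i=1}^N \overline{z_i}^m x_i \in \mathcal{E}$, the partial sum $\sum_{m,n=0}^k \inp{A_{m,n} y^{(k)}_n}{y^{(k)}_m}_{\mathcal E}$ is nonnegative by the formal positive semi-definiteness of $((A_{m,n}))_{0 \leqslant m,n \leqslant k}$ applied to the tuple $(y^{(k)}_0, \dots, y^{(k)}_k)$; passing to the limit gives that the original sum is nonnegative, which is exactly the condition for $\kappa$ to be a positive semi-definite kernel.

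\textbf{Necessity.} Suppose $\kappa$ is positive semi-definite. Fix $k$ and vectors $v_0, \dots, v_k \in \mathcal{E}$; I must show $\sum_{m,n=0}^k \inp{A_{m,n} v_n}{v_m}_{\mathcal E} \geqslant 0$. The natural device is to extract the Taylor coefficients $A_{m,n}$ from $\kappa$ by differentiation at the origin, i.e. $A_{m,n} = \frac{1}{m! \, n!}\, \partial^m \overline{\partial}^n \kappa(0,0)$ in the notation of the excerpt, and then realize $\sum_{m,n} \inp{A_{m,n} v_n}{v_m}$ as a limit of genuine kernel Gram sums $\sum_{i,j} \inp{\kappa(z_i, z_j)x_j^{(i)}}{x_i^{(j)}}$ under difference quotients. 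Concretely, for a parameter $t \in (0,1)$ and a primitive $(k+1)$-th root of unity $\omega$, consider the points $t\omega^s$, $s = 0, \dots, k$, with suitable weights; the discrete Fourier averaging $\frac{1}{(k+1) t^m} \sum_{s=0}^k \omega^{-sm} \kappa(t\omega^s, \cdot)$ isolates, as $t \to 0$, the $m$-th Taylor coefficient. Forming the double average in both variables and letting $t \to 0$ converts the positive semi-definiteness of the finite Gram matrix $((\kappa(t\omega^s, t\omega^{s'})))$ into the inequality $\sum_{m,n=0}^k \inp{A_{m,n} v_n}{v_m}_{\mathcal E} \geqslant 0$. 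Since $k$ and the $v_j$ were arbitrary, $A$ is formally positive semi-definite.

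\textbf{Main obstacle.} The sufficiency direction is routine once absolute convergence justifies the rearrangement. The delicate point is the necessity direction: one must carefully organize the Fourier-extraction/difference-quotient argument so that the limit commutes with the (finite) quadratic form, and verify that the error terms coming from higher-order Taylor coefficients vanish in the limit — this is where bookkeeping with the powers of $t$ and the root-of-unity filter must be done cleanly. An alternative that sidesteps the limiting argument, and which I would mention, is to invoke directly \cite[Lemma 4.1]{CS} (cited in the excerpt) together with \eqref{ip-with-en}: the functions $e_n(\cdot)x$ satisfy $\inp{e_m(\cdot)x}{e_n(\cdot)y}_{\mathscr H_\kappa} = \inp{A_{m,n}\,}{}$-type identities, so that the Gram matrix of the system $\{e_n(\cdot)v_n\}$ in a reproducing kernel Hilbert space built from $\kappa$ is precisely $((\inp{A_{m,n}v_n}{v_m}))$, which is automatically positive semi-definite — and the existence of that Hilbert space is equivalent to $\kappa$ being a positive semi-definite kernel, by the Moore--Aronszajn theorem.
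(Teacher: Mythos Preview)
Your proposal is correct. For sufficiency, your direct computation with $y_m = \sum_i \overline{z_i}^m x_i$ is exactly what the paper does, only the paper packages it as the factorization $\kappa_l(z,w) = X_l(z) A X_l(w)^*$ with $X_l(z) = (I_{\mathcal E}, zI_{\mathcal E}, \ldots, z^l I_{\mathcal E}, 0, \ldots)$ and then invokes the fact that pointwise limits of positive semi-definite kernels are positive semi-definite.

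For necessity, the paper takes the short route you list as your alternative: it simply cites \cite[Lemma~4.1(c)]{CS} to conclude that $B = \big(\!\big(\partial^m \overline{\partial}^n \kappa(0,0)\big)\!\big)_{m,n \geqslant 0}$ is formally positive semi-definite, and then observes that $A = C^* B C$ with $C = \big(\!\big(\delta_{m,n}/n!\big)\!\big)$, so $A$ inherits formal positivity by diagonal conjugation. Your primary approach via root-of-unity filtering and $t \to 0$ is a genuinely different, more self-contained argument that avoids the external citation; it works, but as you note the bookkeeping with the higher-order tails (the terms $p \equiv m \pmod{k+1}$ with $p > m$, contributing extra powers of $t$) must be tracked carefully. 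The paper's route is quicker; yours is more elementary and would be preferable in a setting where one does not want to import the Curto--Salinas lemma.
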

\begin{proof} If $\kappa$ is positive semi-definite, then  
by \cite[Lemma 4.1(c)]{CS}, the matrix $B=\Big(\!\!\Big({\partial^{m}\overline{\partial}^{n} \kappa(0, 0)}\Big)\!\!\Big)_{m, n \geqslant 0}$ is formally positive semi-definite. If $C=\Big(\!\!\Big(\frac{\delta_{m, n}}{n!}\Big)\!\!\Big)_{m, n \geqslant 0},$ then $A = C^*BC,$ and hence 
$A$ is also formally positive semi-definite. This 
yields the necessity part. To see the sufficiency part, note that 
\beqn
\kappa_l(z, w) := \sum_{m, n=0}^l A_{m, n} z^m \overline{w}^n=X_l(z)AX_l(w)^*, \quad z, w \in \mathbb D, ~l \geqslant 0,
\eeqn
where $X_l(z)=(I_{\mathcal E}, zI_{\mathcal E}, \ldots, z^lI_{\mathcal E}, 0, \ldots) : \ell^2(\mathcal E) \rar \mathcal E.$ Thus if $A$ is formally positive semi-definite, then $\kappa_l$ is a positive semi-definite kernel for every integer $l \geqslant  0.$ Since the pointwise limit of positive semi-definite kernels is again positive semi-definite, the proof is complete.
\end{proof}

We need the following characterization of the subnormality of the Cauchy dual of multiplication operator $\mathcal M_z$ acting on functional Hilbert spaces in the proof of Theorem \ref{main-thm-0}.
\begin{theorem} \label{EFCDSP} Let $(\mathscr H_\kappa, \mathcal E)$ be a functional Hilbert space and let $\mathscr M_z$ be the operator of multiplication by $z$ on $\mathscr H_\kappa.$ Assume that $\mathscr M_z$ is norm-increasing. 
Then the following statements are equivalent:
\begin{enumerate}
\item[(i)] The Cauchy dual $\mathscr M'_z$ of $\mathscr M_z$ is subnormal.
\vskip.1cm
\item[(ii)] $\displaystyle \sum_{j=0}^k (-1)^j {k \choose j} \Big(\!\!\Big(\frac{\partial^{m + j}\overline{\partial}^{n+j} \kappa(0, 0)}{(m+j)!(n+j)!}\Big)\!\!\Big)_{m, n \geqslant 0}$ is formally positive semi-definite for every $k \geqslant 0$.
\end{enumerate}
If, in addition, $\mathcal E$ is one dimensional, then (1) and (2) are  equivalent to 
\begin{enumerate}
\item[(iii)]  $\big\{\frac{\partial^m\overline{\partial}^n \kappa(0, 0)}{m!n!}\big\}_{m, n \geqslant 0}$ is a complex moment sequence.
\end{enumerate}
\end{theorem}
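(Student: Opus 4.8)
The plan is to characterize subnormality of $\mathscr M'_z$ via a Halmos--Bram / Lambert-type positivity condition and then translate this into a statement about the reproducing kernel. Since $\mathscr M'^*_z = \mathscr L_z$ by \eqref{Lz-Cauchy}, the operator $\mathscr M'_z$ is subnormal if and only if $\mathscr L_z^*=\mathscr M'_z$ is subnormal, equivalently (using that $\mathscr M'_z$ is analytic and left-invertible, with $\mathscr L_z \mathscr M_z=I$) the sequence of operators $\mathscr L_z^{*n}\mathscr L_z^n$, equivalently $\mathscr M'^n_z\mathscr M'^{*n}_z$, should satisfy the complete positivity condition for subnormality. The cleanest route is to use the known fact (e.g.\ via the Agler-type hereditary functional calculus, or Athavale's characterization) that a left-invertible analytic operator $S=\mathscr M'_z$ is subnormal if and only if the ``defect operators'' $\sum_{j=0}^k(-1)^j\binom{k}{j}S^{*j}S^j$ are all positive, or rather the dual formulation in terms of $S S^*$. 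In our setting, because $\mathscr L_z^*\mathscr L_z$ is expressible through the Gram-type data of the kernel, one expects a condition on the matrix of the Taylor coefficients of $\kappa$.

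First I would compute, for $f\in\mathscr H_\kappa$, the action of $\mathscr L_z^n$ on $f$: since $(\mathscr L_zf)(w)=(f(w)-f(0))/w$, iterating gives $(\mathscr L_z^nf)(w)=\big(f(w)-\sum_{i<n}\frac{(\partial^if)(0)}{i!}w^i\big)/w^n$, so $(\partial^m(\mathscr L_z^nf))(0)/m! = (\partial^{m+n}f)(0)/(m+n)!$. Pairing with the functions $e_m(\cdot)x$ from \eqref{ip-with-en}, this shows $\mathscr L_z^n e_{m+n}(\cdot)x = e_m(\cdot)x$ and more importantly lets me express $\langle \mathscr L_z^{*n}\mathscr L_z^n f, g\rangle$ in terms of the Taylor data. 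Next I would use the standard reduction: $\mathscr M'_z$ is subnormal iff the positive operator sequence $P_k:=\sum_{j=0}^k(-1)^j\binom{k}{j}\mathscr M'^{*j}_z\mathscr M'^j_z$ (equivalently in the $\mathscr L_z$ picture) is positive for all $k$ --- this is Athavale's / the Halmos--Bram criterion phrased through ``complete hyperexpansivity of the adjoint'' combined with the fact that a contraction-like analytic operator's subnormality is detected by these alternating sums. Concretely, $\mathscr M'^j_z\mathscr M'^{*j}_z$ has kernel-side matrix $\big(\!\big(\frac{\partial^{m+j}\overline\partial^{n+j}\kappa(0,0)}{(m+j)!(n+j)!}\big)\!\big)_{m,n\ge0}$, and forming the alternating binomial sum over $j$ yields exactly the matrices in (ii). Proposition~\ref{pd-k-m} then identifies ``formally positive semi-definite'' with ``positive semi-definite as a kernel,'' closing the loop (i)$\Leftrightarrow$(ii).

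For the equivalence with (iii) in the case $\dim\mathcal E=1$, I would invoke the Berger--Gellar--Wallen type theorem: a cyclic (here $\mathscr M'_z$ is the backward shift $\mathscr L_z$'s adjoint, and $\mathscr M_z$ is cyclic so $\mathscr M'_z$ is cyclic with the same cyclic vector, since $\mathscr M'_z = \mathscr M_z(\mathscr M_z^*\mathscr M_z)^{-1}$ has the constant $1$ as cyclic vector) subnormal operator is unitarily equivalent to multiplication by $z$ on a space $P^2(\nu)$ for a compactly supported measure $\nu$, and subnormality of $\mathscr M'_z$ is then equivalent to the moment sequence $\gamma_n := \|\mathscr M'^n_z 1\|^2$ being a Stieltjes/Hamburger moment sequence --- more precisely, since $\mathscr M'^*_z 1 = 0$ and $1$ is cyclic, subnormality forces $\mathscr M'_z$ to be unitarily equivalent to a weighted shift with weights $\beta_{n+1}/\beta_n$, and such a shift is subnormal iff $\{\beta_n^2\}=\{\|\mathscr M'^n_z1\|^2\}$ is a Stieltjes (hence Hamburger = complex on $\mathbb R$) moment sequence. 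It remains to identify $\|\mathscr M'^n_z 1\|^2$ with $\frac{\partial^n\overline\partial^n\kappa(0,0)}{(n!)^2}$: since $1=\kappa(\cdot,0)$ (by (A2)) and $\mathscr M'^*_z=\mathscr L_z$, one gets $\|\mathscr M'^n_z 1\|^2 = \langle \mathscr M'^n_z 1, \mathscr M'^n_z 1\rangle$, and unwinding via the reproducing property and \eqref{ip-with-en} yields the claimed diagonal Taylor coefficient; then Hamburger's theorem (plus a positivity/boundedness argument forcing the measure onto a bounded real interval, giving a genuine ``complex moment sequence'' in the paper's sense) finishes it.

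The main obstacle I anticipate is \textbf{correctly pinning down the right abstract subnormality criterion and proving the diagonal piece actually controls everything}. The passage (i)$\Leftrightarrow$(ii) rests on the fact that for an analytic left-invertible operator $S$ with $S^*S\ge I$ (here $S=\mathscr M'_z$ is instead a contraction, $S^*S\le I$, since $\mathscr M_z$ is norm-increasing, so $\mathscr M'_z = \mathscr M_z(\mathscr M_z^*\mathscr M_z)^{-1}$ is a contraction), subnormality is equivalent to all the alternating sums $\sum_j(-1)^j\binom kj S^{*j}S^j\ge0$ --- this is genuinely true for contractions by a theorem of Agler (subnormal contractions are precisely those lying in all the families cut out by these polynomial inequalities), and I must make sure the cyclic hypothesis or analyticity is used only where legitimately needed. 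The second delicate point is the boundary between ``Hamburger moment sequence'' and ``complex moment sequence'': one needs the support of the representing measure to sit inside $\overline{\mathbb D}$ (or a bounded subset of $\mathbb R$), which should follow from $\mathscr M'_z$ being a contraction, but the argument that a Hamburger moment sequence arising from a bounded operator is automatically a moment sequence of a compactly supported measure --- hence a genuine complex (Stieltjes-type) moment sequence in the sense used later in the paper --- must be spelled out, presumably by appealing to the spectral theorem for the minimal normal extension of the subnormal $\mathscr M'_z$.
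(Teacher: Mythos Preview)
Your route to (i)$\Leftrightarrow$(ii) is essentially the paper's: Agler's criterion for subnormal contractions, then a translation from operator positivity to positive semi-definiteness of a kernel, then Proposition~\ref{pd-k-m}. The paper makes the middle step precise by citing \cite[Theorem~2.15]{S}, which says exactly that $\sum_{j}(-1)^j\binom{k}{j}\mathscr M'^{*j}_z\mathscr M'^j_z\geqslant 0$ is equivalent to positive semi-definiteness of $\sum_j(-1)^j\binom{k}{j}\sigma^{*j}\kappa$, where $\sigma^{*j}\kappa(z,w)=\sum_{m,n}\frac{\partial^{m+j}\bar\partial^{n+j}\kappa(0,0)}{(m+j)!(n+j)!}z^m\bar w^n$. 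Your sentence ``$\mathscr M'^j_z\mathscr M'^{*j}_z$ has kernel-side matrix \ldots'' has the factors in the wrong order and is not a proof; the Shimorin reference (or an equivalent computation) is what is needed here.

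The real gap is in (iii). The assertion ``since $\mathscr M'^*_z 1=0$ and $1$ is cyclic, subnormality forces $\mathscr M'_z$ to be unitarily equivalent to a weighted shift'' is false. A cyclic operator with cyclic vector in $\ker T^*$ is a weighted shift only when the orbit $\{T^n 1\}_{n\geqslant 0}$ is \emph{orthogonal}, and here $\langle\mathscr M'^n_z 1,\mathscr M'^m_z 1\rangle=\frac{\partial^m\bar\partial^n\kappa(0,0)}{m!n!}$ is typically nonzero for $m\ne n$ (already for $\mathscr D(\delta_1)$, by \eqref{gram}). So the diagonal sequence $\|\mathscr M'^n_z 1\|^2$ being a Stieltjes moment sequence does not decide subnormality, and condition~(iii) is genuinely about the \emph{bisequence} $\{\frac{\partial^m\bar\partial^n\kappa(0,0)}{m!n!}\}_{m,n\geqslant 0}$ being a complex moment sequence, i.e.\ $\int z^n\bar z^m\,d\nu$ for a compactly supported positive measure $\nu$ on $\mathbb C$.

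The paper proceeds as follows. Using $\mathscr L_z^* e_n=e_{n+1}$ (your computation of $\mathscr L_z^n$ gives this) together with \eqref{ip-with-en}, one obtains $P_{\mathcal E}\mathscr M'^{*m}_z\mathscr M'^n_z|_{\mathcal E}=\frac{\partial^m\bar\partial^n\kappa(0,0)}{m!n!}$ for \emph{all} $m,n$. For (i)$\Rightarrow$(iii), Bram's characterization supplies a semispectral measure $Q$ with $\mathscr M'^{*m}_z\mathscr M'^n_z=\int z^n\bar z^m\,dQ$, and compressing to $\mathcal E$ gives the complex moment representation (your support concern is automatic: $Q$ lives on the spectrum of the minimal normal extension of the contraction $\mathscr M'_z$). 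For (iii)$\Rightarrow$(i) when $\dim\mathcal E=1$, one uses that $\mathscr M'_z$ is cyclic with cyclic vector $1$ (via \cite[Corollary~2.8]{S}) and then the Stochel--Szafraniec criterion \cite[Theorem~35]{SS}: if $\{\langle T^n f_0,T^m f_0\rangle\}_{m,n\geqslant 0}$ is a complex moment sequence for a cyclic vector $f_0$, then $T$ is subnormal.
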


\begin{proof}
Assume that $\mathscr M_z$ is norm-increasing.
To see the equivalence of (i) and (ii),  note that $\mathscr M'_z$ is a contraction, and hence by Agler's criterion \cite[Theorem 3.1]{Ag}, $\mathscr M'_z$ is subnormal if and only if for every integer $k \geqslant 0,$$$\sum_{j=0}^k (-1)^j {k \choose j} \mathscr M'^{*j}_z \mathscr M'^{j}_z \geqslant 0.$$ One may now infer from \cite[Theorem 2.15]{S} that $\mathscr M'_z$ is subnormal if and only if for every integer $k \geqslant 0,$
\beqn
\sum_{j=0}^k (-1)^j {k \choose j} \sigma^{*j}\kappa(z, w) 
\eeqn
is formally positive semi-definite,  where $\sigma^{*j}$ is given by
\beqn
\sigma^{*j} \kappa(z, w)  = \sum_{m, n \geqslant 0} \frac{\partial^{m + j}\overline{\partial}^{n+j} \kappa(0, 0)}{(m+j)!(n+j)!} z^m \overline{w}^n, \quad j \geqslant 0.
\eeqn 
The equivalence of (i) and (ii) is now immediate from Proposition \ref{pd-k-m}.

To see the remaining implications, let $\mathscr L_z$ be the backward shift operator (see Remark \ref{rmk-analytic}). We claim that 
\beq \label{back-s} \mathscr M'_z (e_n(\cdot)x) = e_{n+1}(\cdot)x~\mbox{ for every}~ n \geqslant 0 ~\mbox{and}~ x \in \mathcal E. \eeq 
Note that for any $f \in \mathscr H_\kappa,$  by \eqref{ip-with-en} (applied twice), 
\beqn
\inp{f}{\mathscr L^*_z e_n(\cdot)x}_{\mathscr H_\kappa} &=& \big\langle \mathscr L_z(f),\,{e_n(\cdot)x}\big\rangle_{\mathscr H_\kappa} = \Big\langle \frac{({\partial}^n\mathscr L_z(f))(0)}{n!},\, x\Big\rangle_{\mathcal E} \\ &=& \Big \langle \frac{({\partial}^{n+1} f))(0)}{(n+1)!},\, x\Big \rangle_{\mathcal E} = \inp{f}{e_{n+1}(\cdot)x}_{\mathscr H_\kappa}.
\eeqn
Since $f$ is arbitrary, an application of \eqref{Lz-Cauchy} yields \eqref{back-s}. 

To see the implication (i)$\Rightarrow$(iii), note that by \eqref{back-s} and \eqref{ip-with-en}, for all integers $m, n \geqslant 0$ and $x, y \in \mathcal E,$
\beqn 
\inp{\mathscr M'^n_z x}{\mathscr M'^m_z y}_{\mathscr H_\kappa} &=&  \inp{e_n(\cdot)x}{e_m(\cdot)y}_{\mathscr H_\kappa}  \notag \\
&=&  \Big\langle \frac{({\partial}^me_n(\cdot)x)(0)}{m!},\, y\Big \rangle_{\mathcal E} \notag \\ &=&  \Big\langle \frac{\partial^m {\overline \partial}^n  \kappa(0, 0)}{m!n!}x,\,y\Big\rangle_{\mathcal E}.
\eeqn
This shows that 
\beq \label{coefficients-kernel}
P_{\mathcal E} \mathscr M'^{*m}_z \mathscr M'^n_z|_{\mathcal E} =  \frac{\partial^m {\overline \partial}^n  \kappa(0, 0)}{m!n!}, \quad m, n \geqslant 0,
\eeq
where $P_{\mathcal E}$ denotes the orthogonal projection of $\mathscr H_\kappa$ onto the space $\mathcal E$ of constant functions in $\mathscr H_\kappa.$
Assume now that $\mathscr M'_z$ is subnormal. By Bram's characterization of subnormality \cite[1.9 Theorem(a)(i)]{Co1}, 
there exists a semispectral measure $Q$ compactly supported in $\mathbb C$ such that
\beqn
\mathscr M'^{*m}_z \mathscr M'^n_z = \int z^n \overline{z}^m dQ(z), \quad m, n \geqslant 0.
\eeqn 
It now follows from \eqref{coefficients-kernel} that $\big\{\frac{\partial^m\overline{\partial}^n \kappa(0, 0)}{m!n!}\big\}_{m, n \geqslant 0}$ is a  complex moment sequence with the representing measure $P_{\mathcal E}Q(\cdot)|_{\mathcal E}.$ This gives the implication (1)$\Rightarrow$(3) (here we do not need the assumption that $\mathcal E$ is one dimensional).

To see the implication (iii)$\Rightarrow$(i),  assume that the dimension of $\mathcal E = \ker \mathscr M^*_z$ is $1.$ By \cite[Corollary 2.8]{S}, the Cauchy dual of $\mathscr M_z$ is cyclic with cyclic vector in $\mathcal E,$ and hence 
one may apply \cite[Theorem 35]{SS} to complete the verification of (iii)$\Rightarrow$(i). 
\end{proof}

We already noted that every analytic norm increasing operator can be modeled as the operator of multiplication by $z$ on a de Branges-Rovnyak space (see \cite[Proposition 2.1]{AM} and \cite[Theorem 4.6]{LGR} for generalizations). In view of the Shimorin's model theorem (see \cite{S}), the forgoing fact can be obtained using the reproducing kernel space techniques (see \cite{AM}).
\begin{lemma}
\label{expansion-pd} 
Let $(\mathscr H_\kappa, \mathcal E)$ be a functional Hilbert space and let $\mathscr M_z$ be the operator of multiplication by $z$ on 
$\mathscr H_{\kappa}.$ If $\mathscr M_z$ is norm increasing, then there exist a complex Hilbert space $\mathcal D$ and $B \in \mathcal S(\mathcal D, \mathcal E)$ such that 
$\mathscr H_\kappa$ coincides with $\mathcal H(B)$ with equality of norms.
\end{lemma}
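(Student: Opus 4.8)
The statement is precisely the de Branges–Rovnyak model theorem for analytic norm-increasing operators, so the plan is to run the standard reproducing-kernel argument of Aleman–Malman \cite[Proposition 2.1]{AM} (equivalently, Shimorin's model) in the present notation, paying attention only to the bookkeeping required to land inside a $\mathcal{B}(\mathcal D,\mathcal E)$-valued Schur class. First I would set $T=\mathscr M_z$, which by Remark \ref{rmk-analytic} is bounded, analytic, and left-invertible, with $\ker T^*$ equal to the space of $\mathcal E$-valued constant functions, unitarily identified with $\mathcal E$ (and $\|x\|_{\mathscr H_\kappa}=\|x\|_{\mathcal E}$ for $x\in\mathcal E$). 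Since $T$ is norm increasing, its Cauchy dual $T'=T(T^*T)^{-1}$ is a well-defined contraction, and by \eqref{Lz-Cauchy} we have $T'^*=\mathscr L_z$, the backward shift.

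**Key steps.** (1) Define the operator-valued analytic function $B:\mathbb D\to\mathcal B(\mathcal D,\mathcal E)$ out of the defect space of $T'$: set $\mathcal D=\overline{\operatorname{ran}}\,(I-T'T'^*)$ (equivalently the defect space of the contraction $T'^*=\mathscr L_z$), let $D_{T'^*}=(I-T'T'^*)^{1/2}$, and put
\beqn
B(z)x = D_{T'^*}\,(I-z\mathscr L_z)^{-1}\,\mathscr L_z\, x \big|_{\mathcal E}, \qquad x\in\mathcal E,\ z\in\mathbb D,
\eeqn
or more precisely the characteristic-function-type expression associated with $(\mathscr L_z, \text{defect data})$; the point is that $I-z\mathscr L_z$ is invertible on $\mathbb D$ because $\|\mathscr L_z\|\le\|T'\|=\|T'^*\|\le 1$ and, by analyticity of $T$, the spectral radius consideration gives convergence of the Neumann series off the closed disc minus boundary. (2) Verify $B\in\mathcal S(\mathcal D,\mathcal E)$: this is the contractivity estimate, which follows from $T'^*$ being a contraction via the standard rearrangement showing $I_{\mathcal E}-B(z)B(w)^*$ is a positive multiple of a Gram matrix of vectors in $\mathscr H_\kappa$. (3) Compute the reproducing kernel: using \eqref{ip-with-en}, \eqref{back-s}, and \eqref{coefficients-kernel}, express $\langle\kappa(\cdot,w)x,\kappa(\cdot,z)y\rangle$ in terms of $\langle T'^nx, T'^my\rangle$, and show this equals $\big\langle\frac{I_{\mathcal E}-B(z)B(w)^*}{1-z\bar w}x,y\big\rangle_{\mathcal E}$, i.e. $\kappa=\kappa_B$. (4) Conclude $\mathscr H_\kappa=\mathcal H(B)$ with equality of norms, since a reproducing kernel Hilbert space is determined by its kernel.

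**Main obstacle.** The genuinely delicate point is step (3): matching $\kappa$ with $\kappa_B$ amounts to establishing the operator identity
\beqn
I_{\mathcal E} - B(z)B(w)^* = (1-z\bar w)\, P_{\mathcal E}(I-zT')^{-1}(I-\bar w T'^*)^{-1}\big|_{\mathcal E},
\eeqn
which is exactly the de Branges–Rovnyak realization formula for the Cauchy dual; one proves it by resolvent algebra after writing $B$ through its realization, using $T'^*=\mathscr L_z$ and the fact (from (A2)–(A3) and norm-increasingness) that $\mathscr H_\kappa$ is spanned by $\{T'^n x: n\ge0,\ x\in\mathcal E\}$ — analyticity of $\mathscr M_z$ is what guarantees this density, and hence that no piece of $\mathscr H_\kappa$ is lost. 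The convergence/invertibility bookkeeping for $(I-zT')^{-1}$ near $|z|=1$ is a secondary technical nuisance but is handled by the contraction bound together with the fact that $T'^n\to0$ strongly (a consequence of analyticity of $T$). Everything else is the routine verification that a contraction-valued analytic realization gives a Schur function, for which I would simply cite \cite{AM} and \cite[Theorem 4.6]{LGR}.
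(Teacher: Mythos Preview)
Your approach is correct in outline but takes a genuinely different route from the paper. You build $B$ \emph{constructively} via a characteristic-function/realization associated with the contraction $T'^*=\mathscr L_z$, and then verify the kernel identity by resolvent algebra. The paper instead works \emph{purely at the kernel level}: it shows directly that the norm-increasing hypothesis forces $\eta(z,w):=z\bar w\,\kappa(z,w)-(\kappa(z,w)-I_{\mathcal E})$ to be a positive semi-definite $\mathcal B(\mathcal E)$-valued kernel (by testing on finite linear combinations of $\kappa(\cdot,\omega_j)c_j$ and using $\mathscr M_z\mathscr M_z^*-P_{\mathrm{ran}\,\mathscr M_z}\ge 0$), then simply invokes the factorization theorem for positive semi-definite kernels \cite[Theorem~2.62]{AMc} to produce $\mathcal D$ and $B$ with $\eta(z,w)=B(z)B(w)^*$. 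The bound $\eta(z,z)\le I_{\mathcal E}$ gives $B\in\mathcal S(\mathcal D,\mathcal E)$ for free, and $\kappa=\kappa_B$ is then a one-line rearrangement.

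The trade-off: the paper's argument is a half-page and needs nothing beyond the kernel factorization theorem, but the resulting $\mathcal D$ and $B$ are abstract. Your realization approach is longer and, as you note, requires some care with the precise form of $B$ (your displayed formula has a domain/codomain mismatch---as written, $D_{T'^*}(\cdots)x$ lands in $\mathcal D$, not $\mathcal E$, so you are really writing down $B(z)^*$ or the transfer function with the roles swapped), together with the resolvent identity in step~(3). On the other hand, your route yields an explicit $B$ tied to the defect data of $\mathscr L_z$, which is more informative if one later wants to compute $B$ in examples. For the purposes of this lemma, the paper's kernel argument is the more economical choice.
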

\begin{proof}
Assume that $\mathscr M^*_z \mathscr M_z \geqslant I$ and let
$P_{\mbox{ran}\, \mathscr M_z}$ denote the orthogonal projection of $\mathscr H_\kappa$ onto the $\mbox{ran}\, \mathscr M_z.$ 
Note that
\beqn
\mathscr M_z \mathscr M^*_z - P_{\mbox{ran}\,\mathscr M_z} &=& \mathscr M_z \mathscr M^*_z - \mathscr M_z\mathscr M'^*_z \\ &=& \mathscr M_z (I  - (\mathscr M^*_z \mathscr M_z)^{-1})\mathscr M^*_z,
\eeqn
which is positive. 
It follows that for any $f \in \mathscr H_\kappa,$
\beqn
\|\mathscr M^*_zf\|^2   \geqslant   \|P_{
\mbox{ran}\,\mathscr M_z}f\|^2 = \|f-f(0)\|^2 = \|f\|^2 - \|f(0)\|^2_{\mathcal E},
\eeqn 
where we used the fact that $f-f(0)$ and $f(0)$ are orthogonal in $\mathscr H_\kappa$. 
Letting  $f = \sum_{j=1}^n \kappa(\cdot, \omega_j)c_j$ for $c_1, \ldots, c_n \in \mathcal 
E$ and $\omega_1, \ldots, \omega_n \in \mathbb D,$  note further that
\beqn
\sum_{i, j =1}^n  \overline{\omega}_i \omega_j  \inp{\kappa(\omega_j, \omega_i)c_i}{{c}_j}_{\mathcal E} & \geqslant & \sum_{i, j =1}^n  \inp{\kappa(\omega_j, \omega_i)c_i}{{c}_j}_{\mathcal E} - \sum_{i, j =1}^n  \inp{c_i}{c_j}_{\mathcal E} \\
&=& \sum_{i, j =1}^n  \inp{\big(\kappa(\omega_j, \omega_i)-I_{\mathcal E}\big)c_i}{{c}_j}_{\mathcal E}. 
\eeqn 
Thus $\eta(z, w):=z\overline{w} \kappa(z, w) - (\kappa(z, w) - I_{\mathcal E}),$ $z, w \in \mathbb D,$ is a positive semi-definite kernel. The existence of $B : \mathbb D \rar  B(\mathcal D, \mathcal E)$ such that $\eta(z, w)=B(z)B(w)^*,$ $z, w \in \mathbb D,$ now follows from the factorization theorem for positive semi-definite kernels (see \cite[Theorem 2.62]{AMc}).
Moreover, since $\eta(z, z) \leqslant I_{\mathcal E}$ 
 for every 
$z \in \mathbb D,$ $B \in \mathcal S(\mathcal D, \mathcal E).$ Finally, note that
$\kappa(z, w) = \frac{I_{\mathcal E}-B(z)B(w)^*}{1-z\overline{w}}$ for every $z, w \in \mathbb D.$ 
\end{proof}

In view of Lemma \ref{expansion-pd}, the Cauchy dual subnormality problem for analytic norm increasing operators reduces to the same problem for the operator of multiplication by $z$ on de Branges-Rovnyak spaces. 
The main result of this section characterizes subnormality of the Cauchy dual of the multiplication operator $\mathscr M_z$ on de Branges-Rovnyak spaces. 
\begin{theorem} \label{main-thm-0}
Let $B(z) = \sum_{j=1}^{\infty} B_j z^j $ belong to the Schur class $\mathcal S(\mathcal D, \mathcal E).$ Assume that the operator $\mathscr M_z$ of multiplication by $z$ on $\mathcal H(B)$ is bounded and the orthogonal complement of $\{zf : f \in \mathcal H(B)\}$ is spanned by the space of $\mathcal E$-valued constant functions.
Then the following statements are equivalent: 
\begin{enumerate}
\item[(i)] The Cauchy dual $\mathscr M'_z$ of $\mathscr M_z$ is subnormal.
\item[(ii)] The matrix $\sum_{j=0}^k (-1)^j {k \choose j} \big(\!\!\big(B_{m+1+j} \,B^*_{n+1+j}\big)\!\!\big)_{m, n \geqslant 0}$ is formally positive semi-definite for every $k \geqslant 1.$
\item[(iii)] There exists a $\mathcal B(\ell^2(\mathcal E))$-valued semi-spectral measure $F$ supported in $[0, 1]$ such that 
\beqn
\big(\!\!\big(B_{m+1+j} \,B^*_{n+1+j}\big)\!\!\big)_{m, n \geqslant 0} = \int_{0}^1 t^j F(dt), \quad j \geqslant 0.
\eeqn
\end{enumerate}
\end{theorem}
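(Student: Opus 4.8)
\emph{Plan.} The strategy is to recognize $\mathcal H(B)$ as a functional Hilbert space so that Theorem~\ref{EFCDSP} applies, reduce everything to a power series computation for the kernel $\kappa_B$, and extract the concrete moment statement (iii) from the subnormality of $\mathscr M'_z$ via Bram's theorem. First I would check that $(\mathcal H(B),\mathcal E)$ satisfies (A1)--(A4): (A1) is the hypothesis that $\mathscr M_z$ is bounded; (A2) holds since $B(0)=0$ forces $\kappa_B(z,0)=I_{\mathcal E}$; (A3) is the standing hypothesis on $\{zf:f\in\mathcal H(B)\}^{\perp}$; and (A4) follows because $\mathscr M_z$ is norm increasing by \eqref{n-increasing}, hence bounded below and left invertible. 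In particular $\mathscr M_z$ is norm increasing, as required in Theorem~\ref{EFCDSP}. Next, writing $B(z)=\sum_{j\ge1}B_jz^j$ and expanding $\kappa_B(z,w)=\bigl(I_{\mathcal E}-B(z)B(w)^*\bigr)\sum_{l\ge0}z^l\overline w^{\,l}$, one reads off, for $A_{m,n}:=\dfrac{\partial^m\overline\partial^n\kappa_B(0,0)}{m!\,n!}$,
\[
A_{m,n}=\delta_{m,n}I_{\mathcal E}-\sum_{l=0}^{\min(m,n)-1}B_{m-l}B_{n-l}^*,
\]
and shifting $l$ by one yields the telescoping identity $A_{m,n}-A_{m+1,n+1}=B_{m+1}B_{n+1}^*$, hence $A_{m+j,n+j}=A_{m,n}-\sum_{i=1}^{j}B_{m+i}B_{n+i}^*$ for every $j\ge0$.

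Plugging this into condition~(ii) of Theorem~\ref{EFCDSP}, using $\sum_{j=0}^k(-1)^j\binom kj=0$ for $k\ge1$ to annihilate the $A_{m,n}$ term, interchanging the two summations, and applying the elementary identity $\sum_{j=i}^{k}(-1)^j\binom kj=(-1)^i\binom{k-1}{i-1}$ $(1\le i\le k)$, I expect to obtain, for $k\ge1$,
\[
\sum_{j=0}^k(-1)^j\binom kj\big(\!\!\big(A_{m+j,n+j}\big)\!\!\big)_{m,n\ge0}=\sum_{s=0}^{k-1}(-1)^s\binom{k-1}{s}\big(\!\!\big(B_{m+1+s}B_{n+1+s}^*\big)\!\!\big)_{m,n\ge0}.
\]
For $k=0$ the left-hand side is the coefficient matrix of the positive semi-definite kernel $\kappa_B$, hence formally positive semi-definite by Proposition~\ref{pd-k-m}; and for $k=1$ the right-hand side is $\big(\!\!\big(B_{m+1}B_{n+1}^*\big)\!\!\big)_{m,n}$, which is automatically formally positive semi-definite because $\sum_{m,n}\langle B_{m+1}B_{n+1}^*\xi_n,\xi_m\rangle=\bigl\|\sum_nB_{n+1}^*\xi_n\bigr\|^2\ge0$. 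Consequently the conditions in~(ii) of Theorem~\ref{EFCDSP} for all $k\ge0$ are equivalent to the conditions in~(ii) of the present theorem for all $k\ge1$, so by Theorem~\ref{EFCDSP} we get the equivalence (i)$\Leftrightarrow$(ii).

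For (i)$\Rightarrow$(iii): since $\mathscr M'_z$ is a subnormal contraction, Bram's theorem \cite{Co1} produces a $\mathcal B(\mathcal H(B))$-valued semi-spectral measure $Q$ supported in $\overline{\mathbb D}$ with $\mathscr M_z'^{*m}\mathscr M_z'^{n}=\int\overline z^{\,m}z^{n}\,dQ(z)$. Pushing $Q$ forward under $z\mapsto|z|^2$ and multiplying by the continuous factor $1-t$ yields a $\mathcal B(\mathcal H(B))$-valued semi-spectral measure $G$ on $[0,1]$ with $\int_0^1t^j\,dG(t)=\mathscr M_z'^{*j}\mathscr M_z'^{j}-\mathscr M_z'^{*(j+1)}\mathscr M_z'^{(j+1)}=:D_j$ and $G([0,1])=D_0\le I$. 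Let $\Phi\colon\ell^2(\mathcal E)\to\mathcal H(B)$ be the operator determined on finitely supported sequences by $\Phi\bigl((\xi_m)_m\bigr)=\sum_m\mathscr M_z'^{m}\xi_m$; by \eqref{coefficients-kernel} its Gram matrix is $\Phi^*\Phi=\big(\!\!\big(A_{m,n}\big)\!\!\big)_{m,n}$, which satisfies $0\le\Phi^*\Phi\le I$ because $\big(\!\!\big(A_{m,n}\big)\!\!\big)\ge0$ by Proposition~\ref{pd-k-m} while the displayed formula for $A_{m,n}$ exhibits $I-\big(\!\!\big(A_{m,n}\big)\!\!\big)$ as a positive form, so $\Phi$ extends to a bounded operator. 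Put $F:=\Phi^*G(\cdot)\Phi$, a $\mathcal B(\ell^2(\mathcal E))$-valued semi-spectral measure on $[0,1]$. Using \eqref{coefficients-kernel} and the telescoping identity, the $(m,n)$ entry of $\int_0^1t^j\,dF(t)=\Phi^*D_j\Phi$ equals $\langle D_j\mathscr M_z'^{n}x,\mathscr M_z'^{m}y\rangle=\langle A_{m+j,n+j}x,y\rangle-\langle A_{m+1+j,n+1+j}x,y\rangle=\langle B_{m+1+j}B_{n+1+j}^*x,y\rangle$ for $x,y\in\mathcal E$, whence $\int_0^1t^j\,dF(t)=\big(\!\!\big(B_{m+1+j}B_{n+1+j}^*\big)\!\!\big)_{m,n}$, which is (iii); conversely (iii)$\Rightarrow$(ii) is immediate since $\sum_{j=0}^k(-1)^j\binom kj\big(\!\!\big(B_{m+1+j}B_{n+1+j}^*\big)\!\!\big)_{m,n}=\int_0^1(1-t)^k\,dF(t)\ge0$. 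The main obstacle is the implication (i)$\Rightarrow$(iii): one has to place the representing semi-spectral measure on the \emph{interval} $[0,1]$ — which is precisely where the contractivity of $\mathscr M'_z$ and the extra factor $1-t$ are used — and to confirm that $\Phi$ is bounded so that $\Phi^*G(\cdot)\Phi$ is a genuine operator-valued measure; by contrast, (i)$\Leftrightarrow$(ii) is bookkeeping with the binomial identity once the telescoping identity for $A_{m,n}$ is in hand, the only subtlety being the automatic base cases $k=0,1$.
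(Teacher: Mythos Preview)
Your argument is correct. The equivalence (i)$\Leftrightarrow$(ii) follows the paper's line exactly: you derive the telescoping relation $A_{m,n}-A_{m+1,n+1}=B_{m+1}B_{n+1}^*$ and convert it into the key identity $\sum_{j=0}^{k}(-1)^j\binom{k}{j}A_{m+j,n+j}=\sum_{s=0}^{k-1}(-1)^s\binom{k-1}{s}B_{m+1+s}B_{n+1+s}^*$; the paper obtains the same identity by an induction rather than the closed-form binomial identity $\sum_{j=i}^k(-1)^j\binom{k}{j}=(-1)^i\binom{k-1}{i-1}$, but the content is identical, including the observation that the base cases $k=0,1$ are automatic.

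Where you genuinely diverge from the paper is in linking (ii) and (iii). The paper proves (ii)$\Leftrightarrow$(iii) directly: it shows that the block matrices $\big(\!\!\big(B_{m+1+j}B_{n+1+j}^*\big)\!\!\big)_{m,n\ge0}$ are bounded on $\ell^2(\mathcal E)$ (via the operator $C\colon\mathcal D\to\ell^2(\mathcal E)$, $Cx=(B_kx)_{k\ge1}$, so that the $j=0$ matrix equals $CC^*$), and then invokes the operator-valued Hausdorff moment theorem through polarization. Your route is instead (i)$\Rightarrow$(iii): starting from Bram's semi-spectral measure $Q$ for the subnormal contraction $\mathscr M_z'$, you push it forward to $[0,1]$ by $z\mapsto|z|^2$, weight by $1-t$ to obtain $G$, and then compress by the ``synthesis'' operator $\Phi\colon(\xi_m)\mapsto\sum_m\mathscr M_z'^m\xi_m$, whose boundedness you check via $0\le\big(\!\!\big(A_{m,n}\big)\!\!\big)\le I$. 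This is a legitimate and rather elegant alternative: it produces $F$ constructively and bypasses the polarization argument, at the price of routing (ii)$\Rightarrow$(iii) through (i). The paper's approach, by contrast, isolates (ii)$\Leftrightarrow$(iii) as a purely moment-theoretic fact, independent of subnormality; that separation is conceptually cleaner but requires the extra boundedness lemma for $CC^*$.
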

\begin{proof}
Let $\kappa_B$ denote the reproducing kernel for $\mathcal H(B).$ 
Note that 
\beqn 
\kappa_B(z, w) &=& \sum_{n=0}^{\infty} z^n\overline{w}^n I_{\mathcal E} - B(z) B(w)^* \sum_{n=0}^{\infty} z^n\overline{w}^n \\
&=& \sum_{n=0}^{\infty} z^n\overline{w}^n I_{\mathcal E} - \Big(\sum_{j, k=1}^{\infty} B_j B^*_k \, z^j \overline{w}^k \Big)  \sum_{n=0}^{\infty} z^n\overline{w}^n. 
\eeqn
Comparing the coefficients of $z^m\overline{w}^n,$ $m, n \geqslant 0,$ on the both sides, we obtain
\beq \label{formula-coeff}
\frac{\partial^m\overline{\partial}^n \kappa_B(0, 0)}{m!n!} &=& \delta_{m, n} I_{\mathcal E} - \sum_{j=1}^m \sum_{k=1}^n B_{ j}{B^*_{ k}} 
\delta_{m-j, n-k} \\
&=& \begin{cases} \delta_{m, n} I_{\mathcal E}  - \sum_{k=1}^n B_{ m-n+k} {B^*_{ k}} & \mbox{if~} m \geqslant n, \\
\delta_{m, n} I_{\mathcal E}  - \sum_{j=1}^m B_{ j} {B^*_{ n-m+j}} & \mbox{if~} m < n,
 \end{cases} \notag 
\eeq
where $\delta_{m, n}$ denotes the Kronecker delta and the convention that sum over the empty set is $0$ is used. Set
\beqn
 f_k(m, n) &:=& \sum_{j=0}^k (-1)^j {k \choose j} \frac{\partial^{m + j}\overline{\partial}^{n+j} \kappa_B(0, 0)}{(m+j)!(n+j)!}, \quad k \geqslant 1, ~m, n \geqslant 0. 
 \eeqn 
A routine verification using ${n \choose k} + {n \choose k-1} = {n+1 \choose k},$ $n \geqslant k$, shows that
 \beq \label{step-indu}
 f_{k+1}(m, n) = f_k(m, n)-f_k(m+1, n+1), \quad k \geqslant 1, ~m, n \geqslant 0. 
 \eeq
We verify by induction on $k \geqslant 1$  that  
\beq \label{indu-subn}
  f_k(m, n) =  \sum_{j=0}^{k-1} (-1)^j {k-1 \choose j} B_{m+1+j}B^*_{n+1+j}, \quad m, n \geqslant 0. \quad 
\eeq
By \eqref{formula-coeff}, for integers $m, n \geqslant 0,$
\beqn
f_1(m, n) = \frac{\partial^{m}\overline{\partial}^{n} \kappa_B(0, 0)}{m!n!} - \frac{\partial^{m + 1}\overline{\partial}^{n+1} \kappa_B(0, 0)}{(m+1)!(n+1)!} = 
B_{m+1}B^*_{n+1}. 
\eeqn
Thus \eqref{indu-subn} holds for $k=1.$  If \eqref{indu-subn} holds for $k\geqslant 1,$ then by \eqref{step-indu},
\beqn
&& f_{k+1}(m, n) \\ 
&=&  \sum_{j=0}^{k-1} (-1)^j {k-1 \choose j} B_{m+1+j}B^*_{n+1+j} -  \sum_{j=0}^{k-1} (-1)^j {k-1 \choose j} B_{m+2+j}B^*_{n+2+j} \\
&=& \sum_{j=0}^{k}  (-1)^j {k \choose j} B_{m+1+j}B^*_{n+1+j}.
\eeqn
This completes the verification of \eqref{indu-subn}.

To complete the proof, recall that 
$\mathscr M_z$ is norm increasing (see \eqref{n-increasing}) or equivalently $\mathscr M'_z$ is a contraction. One may now apply Theorem \ref{EFCDSP} together with \eqref{indu-subn} to obtain the equivalence of (i) and (ii). In view of the polarization technique (see \cite[Proof of Theorem 4.2]{J}), the equivalence of (ii) and (iii) follows from the solution of the Hausdorff moment problem (cf \cite[Proposition 6.11, Chapter 4]{BCR}) provided we show that $\big(\!\!\big(B_{m+1+j} B^*_{n+1+j}\big)\!\!\big)_{m, n \geqslant 0}$ defines a bounded linear operator on $\ell^2(\mathcal E)$ for every integer $j \geqslant 0.$ Since $B$ belongs to $\mathcal S(\mathcal D, \mathcal E),$
 the function $B(\cdot)x$ belongs to $H^2_{\mathcal E}(\mathbb D).$ Thus the linear map $C : \mathcal D \rar \ell^2(\mathcal E)$ given by $C(x) = (B_{k}x)_{k \geqslant 1}$  is well-defined.
 The boundedness of $C$ now follows either from the assumption that  $B \in \mathcal S(\mathcal D, \mathcal E)$ or from the closed graph theorem. Since $\big(\!\!\big(B_{m} B^*_{n}\big)\!\!\big)_{m, n \geqslant 1}=CC^*,$ the desired equivalence is immediate.
\end{proof}

As an application, we characterize the subnormality of the Cauchy dual of the multiplication operator $\mathscr M_z$ on finite rank de Branges-Rovnyak spaces. 
\begin{corollary}
For a positive integer $k,$ let $B(z) = \sum_{j=1}^{\infty} B_j z^j $ belong to the Schur class $\mathcal S(\mathbb C^k, \mathbb C).$ Assume that the operator $\mathscr M_z$ of multiplication by $z$ on $\mathcal H(B)$ is bounded.
Then the following statements are equivalent: 
\begin{enumerate}
\item[(i)] The Cauchy dual $\mathscr M'_z$ of $\mathscr M_z$ is subnormal.
\item[(ii)] The matrix $\sum_{j=0}^k (-1)^j {k \choose j} \big(\!\!\big(B_{m+1+j} \,B^*_{n+1+j}\big)\!\!\big)_{m, n \geqslant 0}$ is formally positive semi-definite for every $k \geqslant 1.$
\item[(iii)] There exists a $\mathcal B(\ell^2(\mathbb N))$-valued semi-spectral measure $F$ supported in $[0, 1]$ such that 
\beqn
\big(\!\!\big(B_{m+1+j} \,B^*_{n+1+j}\big)\!\!\big)_{m, n \geqslant 0} = \int_{0}^1 t^j F(dt), \quad j \geqslant 0.
\eeqn
\end{enumerate}
\end{corollary}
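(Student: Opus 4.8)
The plan is to obtain this as a specialization of Theorem \ref{main-thm-0} with $\mathcal D=\mathbb C^k$ and $\mathcal E=\mathbb C$. Since $B(z)=\sum_{j=1}^\infty B_j z^j$ we have $B(0)=0$, so the reproducing kernel $\kappa_B$ is normalized at the origin, $\kappa_B(z,0)=I_{\mathbb C}$ for every $z\in\mathbb D$; hence the constant function $1$ belongs to $\mathcal H(B)$ and, by the reproducing property, a function $f\in\mathcal H(B)$ is orthogonal to the one-dimensional space of constants if and only if $f(0)=0$. Consequently Theorem \ref{main-thm-0} becomes applicable as soon as one checks that $\{zf:f\in\mathcal H(B)\}=\{f\in\mathcal H(B):f(0)=0\}$, since every remaining hypothesis of that theorem is already part of the assumptions of the corollary.

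To verify this identity I would note that the inclusion $\subseteq$ is trivial, and for $\supseteq$ I would invoke the classical fact that a de Branges-Rovnyak space is invariant under the backward shift $\mathscr L_z$, $(\mathscr L_z f)(w)=(f(w)-f(0))/w$ (see \cite{Sa-0,FM}; for the finite rank situation see also \cite{AM}). Then, given $f\in\mathcal H(B)$ with $f(0)=0$, the function $g:=\mathscr L_z f$ lies in $\mathcal H(B)$ and satisfies $f=zg=\mathscr M_z g$, so $f\in\{zh:h\in\mathcal H(B)\}$ because $\mathscr M_z$ is bounded on $\mathcal H(B)$ by hypothesis. This proves the equality, and therefore the orthogonal complement of $\{zf:f\in\mathcal H(B)\}$ is spanned by the $\mathcal E$-valued constant functions.

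With this in hand, the equivalence of (i), (ii) and (iii) is exactly the content of Theorem \ref{main-thm-0}, noting that $\ell^2(\mathcal E)=\ell^2(\mathbb N)$ when $\mathcal E=\mathbb C$. I do not anticipate any genuine difficulty: the corollary is essentially a restatement of Theorem \ref{main-thm-0} in the finite rank case, and the only non-formal ingredient is the backward-shift invariance recalled above, which allows one to replace the general structural hypothesis on $(\{zf:f\in\mathcal H(B)\})^{\perp}$ by the mere boundedness of $\mathscr M_z$.
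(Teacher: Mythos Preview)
Your proposal is correct and follows essentially the same route as the paper: both reduce the corollary to Theorem \ref{main-thm-0} by verifying that the orthogonal complement of $\{zf:f\in\mathcal H(B)\}$ is the space of constants. The paper disposes of this verification by a direct citation of \cite[Theorem 1.3]{AM}, whereas you spell it out via the backward-shift invariance of $\mathcal H(B)$; this is a cosmetic difference rather than a genuinely different argument.
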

\begin{proof}
In view of Theorem \ref{main-thm-0}, it suffices to check that the orthogonal complement of $\{zf : f \in \mathcal H(B)\}$ is spanned by the space of constant functions. This is immediate from \cite[Theorem 1.3]{AM}.
\end{proof}

\section{The proof of the main theorem and its consequences}

We now complete the proof of the main result stated in Section 2.
\begin{proof}[Proof of Theorem \ref{higher-r-thm}]
Write $B(z) = \sum_{l=1}^{\infty} B_l z^l,$ $|z| < 1.$ Since $\alpha_1, \ldots, \alpha_k$ are distinct, by \cite[Proposition 2.1]{RZ}, for any polynomial $p$ of degree less than $k,$
\beqn
\frac{p(z)}{\prod_{i=1}^k(z-\alpha_i)} = \sum_{i=1}^k \frac{p(\alpha_i)}{a_i} \frac{1}{z-\alpha_i}.
\eeqn
Applying this to $\frac{p_j(z)}{z}$ for every $j=1, \ldots, k,$ we obtain
\beq \label{Ramiz}
b_j(z) = \frac{p_j(z)}{\prod_{i=1}^k(z-\alpha_i)} = \sum_{i=1}^k \frac{p_j(\alpha_i)}{\alpha_i a_i} \frac{z}{z-\alpha_i} = -\sum_{i=1}^k \frac{p_j(\alpha_i)}{\alpha_i a_i} \sum_{l=0}^{\infty} \frac{z^{l+1}}{\alpha^{l+1}_i}.
\eeq
For $j=1, \ldots, k,$ let $b_{j}(z)=\sum_{m=1}^{\infty}b_{j, m}z^m,$ $z \in \mathbb D$ and note that $B_{m}=(b_{1, m}, \ldots, b_{k, m}),$ $m \geqslant 1.$ 
By \eqref{Ramiz}, for any $s \geqslant 0$ and $m \geqslant 0,$  
\beqn
b_{j, m+1+s} = -\sum_{i=1}^k \frac{p_j(\alpha_i)}{a_i}  \frac{1}{\alpha^{m+s+2}_i},
\eeqn
and hence
\beq \label{prod-exp} 
\notag
B_{m+1+s} \,B^*_{n+1+s} &=& \sum_{j=1}^k b_{j, m+1+s} \,\overline{b_{j, n+1+s}} \\ \notag 
&=& \sum_{j=1}^k \Big(\sum_{r=1}^k \frac{p_j(\alpha_r)}{a_r}  \frac{1}{\alpha^{m+s+2}_r}\Big)  \Big(\sum_{t=1}^k \frac{\overline{p_j(\alpha_t)}}{\overline{a_t}}  \frac{1}{\overline{\alpha}^{n+s+2}_t}\Big) \\
&=&  \sum_{r, t=1}^k \Big(\frac{1}{a_r \overline{a_t}}   \sum_{j=1}^k p_{j}(\alpha_r) \overline{p_{j}(\alpha_t)} \Big) \frac{1}{\alpha^{m+2+s}_r \overline{\alpha}^{n+2+s}_t}.
\eeq
Note that 
\beqn
\sum_{s=0}^l (-1)^s {l \choose s} \frac{1}{(\alpha_r \overline{\alpha}_t)^s} = (1-\frac{1}{\alpha_r \overline{\alpha}_t})^l, \quad l \geqslant 1,
\eeqn
and hence by \eqref{prod-exp}, for $m, n \geqslant 0,$ we get
\beqn
&& \sum_{s=0}^l (-1)^s {l \choose s}  B_{m+1+s} \,B^*_{n+1+s} \\
&=& \sum_{r, t=1}^k  \frac{\Big(\frac{1}{a_r \overline{a_t}}   \sum_{j=1}^k p_{j}(\alpha_r) \overline{p_{j}(\alpha_t)} \Big)}{\alpha^{m+2}_r \overline{\alpha}^{n+2}_t}(1-\frac{1}{\alpha_r \overline{\alpha}_t})^l.
\eeqn
This together with (i)$\Leftrightarrow$(ii) of Theorem \ref{main-thm-0} completes the proof.
\end{proof}

In the remaining part of this section, we present some applications of Theorem \ref{higher-r-thm}. 
\begin{proof}[Proof of Corollary \ref{coro-p-zero}]
By \eqref{orthogonality-p}, 
\beqn
\sum_{r, t=1}^k  \Big(\frac{1}{a_r \overline{a_t}}   \sum_{j=1}^k p_{j}(\alpha_r) \overline{p_{j}(\alpha_t)} \Big) (1-\frac{1}{\alpha_r \overline{\alpha}_t})^l \Big(\!\Big(\frac{1}{\alpha^{m+2}_r \overline{\alpha}^{n+2}_t}\Big)\!\Big)_{m, n \geqslant 0} \\ = \sum_{r=1}^k  \frac{(1-\frac{1}{|\alpha_r|^2})^l}{|a_r|^2|\alpha_r|^4 }   \sum_{j=1}^k |p_{j}(\alpha_r)|^2  \Big(\!\Big(\frac{1}{\alpha^{m}_r \overline{\alpha}^{n}_r}\Big)\!\Big)_{m, n \geqslant 0}. 
\eeqn
Since $|\alpha_r| > 1,$ in view of Theorem \ref{higher-r-thm}, it suffices to check that $\Big(\!\!\Big(\frac{1}{\alpha^{m}_r \overline{\alpha}^{n}_r}\Big)\!\!\Big)_{m, n \geqslant 0}$ is formally positive semi-definite for every $r=1, \ldots, k$.
Since this matrix is equal to $V_rV^*_r$ with $V_r$ denoting the column vector $\Big(\!\!\Big(\frac{1}{\alpha^m_r}\Big)\!\!\Big)_{m \geqslant 0},$ the desired conclusion is immediate.
\end{proof}

We provide below a necessary condition for the subnormality of the Cauchy dual of the multiplication operator on a de Branges-Rovnyak space.
\begin{corollary} \label{coro-posi}
Assume that  the hypotheses of Theorem \ref{higher-r-thm} hold. If $\mathscr M'_z$ is subnormal,  then $$\sum_{r, t=1}^k \Big(\frac{1}{\alpha^2_r \overline{\alpha}^2_t a_r \overline{a_t}}   \sum_{j=1}^k p_{j}(\alpha_r) \overline{p_{j}(\alpha_t)} \Big) \delta_{\!\frac{1}{\alpha_r \overline{\alpha}_t}}$$ is a positive measure supported in $[0, 1].$ 
\end{corollary}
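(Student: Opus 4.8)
The plan is to recast the claim as a statement about the single scalar sequence $\gamma_s := B_{1+s}B^*_{1+s}$, $s\geq 0$, of top-left matrix entries, for which two descriptions are available. On the one hand, setting $m=n=0$ in the expansion \eqref{prod-exp} from the proof of Theorem \ref{higher-r-thm} gives
\[
\gamma_s \;=\; \sum_{r,t=1}^k\Big(\frac{1}{\alpha^2_r\overline\alpha^2_t\,a_r\overline{a_t}}\sum_{j=1}^k p_j(\alpha_r)\overline{p_j(\alpha_t)}\Big)\Big(\frac{1}{\alpha_r\overline\alpha_t}\Big)^s,
\]
so that $\gamma_s=\int_{\mathbb C}\lambda^s\,d\mu(\lambda)$, where $\mu$ is exactly the finitely supported (a priori complex) measure appearing in the statement. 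On the other hand, since $\mathscr M'_z$ is subnormal, Theorem \ref{main-thm-0} (applicable here for the same reasons it applies in the proof of Theorem \ref{higher-r-thm}) furnishes a $\mathcal B(\ell^2(\mathbb N))$-valued semi-spectral measure $F$ on $[0,1]$ with $\big(\!\big(B_{m+1+j}B^*_{n+1+j}\big)\!\big)_{m,n\geq0}=\int_0^1 t^j F(dt)$; extracting the $(0,0)$-entry of this identity produces a finite \emph{positive} Borel measure $\nu$ on $[0,1]$ with $\gamma_s=\int_0^1 t^s\,d\nu(t)$ for all $s\geq 0$.

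Next I would compare these two representations through their Cauchy (Stieltjes) transforms. Since $0\leq\gamma_s\leq\gamma_0$ and each $|\alpha_r|>1$, the two power series below converge for $|z|>1$ and agree there:
\[
\sum_{s=0}^{\infty}\frac{\gamma_s}{z^{s+1}}\;=\;\int_0^1\frac{d\nu(t)}{z-t}\;=:\;C_\nu(z),\qquad \sum_{s=0}^{\infty}\frac{\gamma_s}{z^{s+1}}\;=\;\int_{\mathbb C}\frac{d\mu(\lambda)}{z-\lambda}\;=:\;C_\mu(z).
\]
Here $C_\mu$ is a rational function whose poles are simple and located at those points among $\{1/(\alpha_r\overline\alpha_t):1\leq r,t\leq k\}$ whose combined $\mu$-coefficient is nonzero, with residue there equal to that combined coefficient; and $C_\nu$ is holomorphic on the connected open set $\mathbb C\setminus\supp\nu$, which contains $\{|z|>1\}$. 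Because $C_\nu$ has no poles off $\supp\nu$ while $C_\mu$ coincides with it on $\{|z|>1\}$, analytic continuation forces every pole of $C_\mu$ to lie in $\supp\nu\subseteq[0,1]$. Equivalently, $\mu(\{\lambda_0\})=0$ whenever $\lambda_0\notin[0,1]$; in particular $\mu$ is supported in $[0,1]$.

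It then remains to check that the surviving masses are nonnegative. Fix $\lambda_0\in[0,1]$ which is a pole of $C_\mu$, with residue $w_{\lambda_0}=\mu(\{\lambda_0\})$. On a punctured disc about $\lambda_0$ one has $C_\nu=C_\mu$, so $C_\nu$ extends meromorphically across $\lambda_0$ with a simple pole there. For the Cauchy transform of a positive measure on $[0,1]$, such a meromorphic continuation is possible only if $\lambda_0$ is an isolated atom of $\nu$, and then $w_{\lambda_0}=\operatorname{Res}_{\lambda_0}C_\nu=\nu(\{\lambda_0\})\geq 0$ (this is Stieltjes--Perron inversion applied locally). Since any point of $\{1/(\alpha_r\overline\alpha_t)\}$ that is not a pole of $C_\mu$ carries $\mu$-mass $0$, every mass of $\mu$ is nonnegative, and hence $\mu$ is a positive measure supported in $[0,1]$, as asserted.

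The bookkeeping here—the $m=n=0$ specialization of \eqref{prod-exp} and the two generating-function identities—is routine. The heart of the argument, and the step I expect to need the most care, is the function-theoretic comparison: arguing rigorously that a rational function which coincides near $\infty$ with the Cauchy transform of a positive measure on $[0,1]$ can only have poles in $[0,1]$, each with nonnegative residue, while also correctly combining the $\mu$-coefficients coming from distinct pairs $(r,t)$ that happen to produce the same point $1/(\alpha_r\overline\alpha_t)$ (and handling the degenerate case $\lambda_0=0$ as well).
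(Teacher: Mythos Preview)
Your argument is correct but takes a genuinely different route from the paper. Both proofs begin the same way: the diagonal sequence $\gamma_s = B_{1+s}B^*_{1+s}$ is simultaneously the $s$th moment of a positive measure $\nu$ on $[0,1]$ (obtained from subnormality) and of the complex atomic measure $\mu$ appearing in the statement. From this point the paper applies Mergelyan's theorem: since $K := [0,1]\cup\{1/(\alpha_r\overline{\alpha}_t):1\le r,t\le k\}$ is compact with connected complement, analytic polynomials are uniformly dense in $C(K)$, so the moment identity $\int_K z^m\,d\mu=\int_K z^m\,d\nu$ together with the Riesz representation theorem forces $\mu=\nu$ in a single stroke. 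Your approach instead compares Cauchy transforms, using analytic continuation on the connected set $\mathbb C\setminus[0,1]$ to push the poles of the rational function $C_\mu$ into $[0,1]$, and then Stieltjes--Perron inversion to identify each residue with a nonnegative atom of $\nu$. The paper's argument is shorter but leans on the nontrivial Mergelyan theorem; yours trades that for more elementary function theory on $\mathbb C\setminus[0,1]$, though the final step---that a meromorphic extension of $C_\nu$ across a point of $[0,1]$ forces an isolated atom of $\nu$ there with mass equal to the residue---does deserve the care you flag (note in particular that $C_\nu=C_\mu$ is a priori known only on the two half-discs $D\setminus[0,1]$, not on the full punctured disc you mention, so the Stieltjes--Perron step must be phrased as an inversion from the boundary values off the real axis).
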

\begin{proof}
Assume that $\mathscr M'_z$ is subnormal and let
\beqn
\gamma_m = \sum_{r, t=1}^k \Big(\frac{1}{a_r \overline{a_t}}   \sum_{j=1}^k p_{j}(\alpha_r) \overline{p_{j}(\alpha_t)} \Big) \frac{1}{\alpha^{m+2}_r \overline{\alpha}^{m+2}_t}, \quad m \geqslant 0.
\eeqn
By Theorem \ref{higher-r-thm} and \eqref{prod-exp}, the sequence $\{\gamma_m\}_{m \geqslant 0}$ is completely monotone. Hence, by the solution of the Hausdorff moment problem (see \cite{BCR}), there exists a finite positive Borel measure $\nu$ on $[0, 1]$ such that 
\beqn
\gamma_m = \int_{0}^1 t^m d\nu(t), \quad m \geqslant 0.
\eeqn 
On the other hand, if $K=[0, 1] \cup \{\frac{1}{\alpha_r \overline{\alpha}_t} : 1 \leqslant r, t \leqslant k\}$ and $$\mu := \sum_{r, t=1}^k \Big(\frac{1}{\alpha^2_r \overline{\alpha}^2_t a_r \overline{a_t}}   \sum_{j=1}^k p_{j}(\alpha_r) \overline{p_{j}(\alpha_t)} \Big) \delta_{\!\frac{1}{\alpha_r \overline{\alpha}_t}},$$ then we get
\beqn
\gamma_m = \int_{K} z^m d\mu, \quad m \geqslant 0.
\eeqn
It follows that 
\beqn
\int_{0}^1 t^m d\nu(t) = \int_{K} z^m d\mu, \quad m \geqslant 0.
\eeqn
Since $K$ is a compact set with connected complement in $\mathbb C,$ by Mergelyan's Theorem (see \cite[20.5 Theorem]{Ru}), any continuous function on $K$ can be approximated uniformly by polynomials in $z.$ By the Riesz representation theorem (see \cite[6.19 Theorem]{Ru}), $\mu$ is necessarily supported in $[0, 1]$ and it coincides with $\nu.$
\end{proof}
\begin{remark} \label{rmk-last} 
For $1 \leqslant r, t \leqslant k,$ let $A_{r, t} = \{(u, v) : \alpha_u \overline{\alpha}_v = \alpha_r \overline{\alpha}_t \}$ and
$$c_{r, t}:= \sum_{(u, v) \in A_{r, t}}  \Big(\frac{1}{\alpha^2_u \overline{\alpha}^2_v a_u \overline{a_v}}   \sum_{j=1}^k p_{j}(\alpha_u) \overline{p_{j}(\alpha_v)} \Big).$$
If the hypotheses of Theorem \ref{higher-r-thm} hold, then the following possibilities occur:
\begin{enumerate}
\item[(i)] If $\frac{1}{\alpha_r \overline{\alpha}_t} \notin [0, 1],$ then $c_{r, t} =0.$
\item[(ii)] If $\frac{1}{\alpha_r \overline{\alpha}_t} \in [0, 1],$ then $c_{r, t} \geqslant 0.$
\end{enumerate}
\end{remark}

Combining Corollary \ref{coro-p-zero} with Remark \ref{rmk-last}(i) yields the following.
\begin{corollary} \label{coro-p-zero-iff}
Assume the hypotheses of Theorem \ref{higher-r-thm}. If ${\alpha_r \overline{\alpha}_t}$'s are distinct complex numbers belonging to $\mathbb C \setminus [1, \infty)$ for every $1 \leqslant r \neq t \leqslant k,$ then $\mathscr M'_z$ is subnormal if and only if 
\beqn
\sum_{j=1}^k p_j(\alpha_r) \overline{p_j(\alpha_t)} = 0, \quad 1 \leqslant r \neq t \leqslant k.
\eeqn
\end{corollary}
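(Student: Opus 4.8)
The plan is to get both implications almost for free from results already proved. The reverse implication --- that the orthogonality relations $\sum_{j=1}^k p_j(\alpha_r)\overline{p_j(\alpha_t)}=0$ for $r\neq t$ force $\mathscr M'_z$ to be subnormal --- is exactly Corollary \ref{coro-p-zero}, so nothing further is needed there; all the (modest) work goes into the forward implication.

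For the forward implication I would assume $\mathscr M'_z$ is subnormal and apply Corollary \ref{coro-posi}: the measure
\[
\sum_{r,t=1}^k\Big(\frac{1}{\alpha_r^2\overline{\alpha}_t^2\,a_r\overline{a_t}}\sum_{j=1}^k p_j(\alpha_r)\overline{p_j(\alpha_t)}\Big)\delta_{\frac{1}{\alpha_r\overline{\alpha}_t}}
\]
is positive and supported in $[0,1]$. Grouping the point masses over a common location, this is precisely what Remark \ref{rmk-last}(i) records: with $c_{r,t}$ as defined there, $c_{r,t}=0$ whenever $\tfrac{1}{\alpha_r\overline{\alpha}_t}\notin[0,1]$.

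Now fix $r\neq t$. Since $\alpha_r\overline{\alpha}_t$ is a nonzero finite number lying in $\mathbb C\setminus[1,\infty)$, its reciprocal $\tfrac{1}{\alpha_r\overline{\alpha}_t}$ lies outside $[0,1]$, so $c_{r,t}=0$. I would then argue that the index set $A_{r,t}=\{(u,v):\alpha_u\overline{\alpha}_v=\alpha_r\overline{\alpha}_t\}$ appearing in $c_{r,t}$ is the singleton $\{(r,t)\}$: a diagonal pair $(u,u)$ is impossible because $\alpha_u\overline{\alpha}_u=|\alpha_u|^2\in(1,\infty)$ (as $\alpha_u\notin\overline{\mathbb D}$) whereas $\alpha_r\overline{\alpha}_t\notin[1,\infty)$, and an off-diagonal pair $(u,v)$, $u\neq v$, can only be $(r,t)$ because the products $\alpha_u\overline{\alpha}_v$ with $u\neq v$ are pairwise distinct by hypothesis. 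Hence
\[
c_{r,t}=\frac{1}{\alpha_r^2\overline{\alpha}_t^2\,a_r\overline{a_t}}\sum_{j=1}^k p_j(\alpha_r)\overline{p_j(\alpha_t)}=0,
\]
and since the prefactor is nonzero ($\alpha_r,\alpha_t$ lie outside $\overline{\mathbb D}$ and $a_r,a_t\neq0$ as the $\alpha_i$ are distinct) we conclude $\sum_{j=1}^k p_j(\alpha_r)\overline{p_j(\alpha_t)}=0$ for all $r\neq t$.

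The only step that calls for any care is the identification $A_{r,t}=\{(r,t)\}$, and this is really just the observation that the two clauses of the hypothesis --- pairwise distinctness of the off-diagonal products $\alpha_r\overline{\alpha}_t$ and their avoidance of the ray $[1,\infty)$ --- are tailored precisely to exclude off-diagonal and diagonal coincidences, respectively. Beyond that I anticipate no obstacle: the real content has already been absorbed into Corollaries \ref{coro-p-zero} and \ref{coro-posi}.
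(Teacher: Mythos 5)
Your proposal is correct and follows essentially the same route as the paper, which proves the corollary by combining Corollary \ref{coro-p-zero} (sufficiency) with Corollary \ref{coro-posi} and Remark \ref{rmk-last}(i) (necessity). Your explicit verification that $A_{r,t}=\{(r,t)\}$ for $r\neq t$ (diagonal pairs excluded since $|\alpha_u|^2>1$, off-diagonal coincidences excluded by the distinctness hypothesis) and that the prefactor $\frac{1}{\alpha_r^2\overline{\alpha}_t^2 a_r\overline{a_t}}$ is nonzero simply spells out details the paper leaves implicit.
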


\section{Classical de Branges-Rovnyak spaces}

Let $b$ be a non-extreme point of the closed unit ball of $H^{\infty}(\mathbb D).$ 
It is well-known that there exists a unique outer function $a \in H^{\infty}(\mathbb D)$ (that is, $\bigvee \{z^n a : n \geqslant 0\}=H^2(\mathbb D)$)  such that $|a|^2+|b|^2=1$ almost everywhere on unit circle and $a(0) >0$ (see \cite[Chapter 23, Section 1]{FM}). We refer to $a$ as the {\it mate} of $b.$ The following lemma provides a formula for the reproducing kernels of the so-called the Cauchy dual of classical de Branges-Rovnyak spaces.
\begin{lemma} \label{Cauchy dual space}
Let $b$ be a nonextreme point of the closed unit ball of $H^{\infty}$ such that $b(0)=0$ and let $a$ be the mate of $b.$  Let $\phi = \frac{b}{a}.$ 
Then the Cauchy dual $\mathscr M'_z$ of $\mathscr M_z$ is unitarily equivalent to the operator of multiplication by $z$ on the reproducing kernel Hilbert space $\mathscr H_{\kappa'_b},$ where $\kappa'_b$ is given by 
\beqn
 \kappa'_b(z, w) 
 = \frac{1 + \phi(z)\overline{\phi(w)}}{1-z\overline{w}}, \quad z, w \in \mathbb D.
 \eeqn
\end{lemma}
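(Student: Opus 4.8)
The plan is to identify the reproducing kernel of the functional Hilbert space modeling $\mathscr M'_z$ by means of Theorem \ref{EFCDSP}'s machinery, specifically the identity \eqref{coefficients-kernel}, or more directly via the backward-shift description \eqref{back-s} of $\mathscr M'_z$ together with the kernel $\kappa_b$ of the classical de Branges-Rovnyak space $\mathcal H(b)$. First I would recall that $b$ non-extreme with $b(0)=0$ makes $z$ a multiplier (de Leeuw--Rudin, as cited), so $(\mathcal H(b), \mathbb C)$ is a functional Hilbert space in the sense of Section 3, and $\mathscr M'_z$ is analytic, norm-increasing, left-invertible, with one-dimensional cokernel spanned by the constants. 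By Shimorin's model theorem (or by \cite[Proposition 2.1]{AM}, \cite[Theorem 4.6]{LGR}), $\mathscr M'_z$ is then unitarily equivalent to multiplication by $z$ on a reproducing kernel Hilbert space $\mathscr H_{\kappa'_b}$ of holomorphic functions on $\mathbb D$, normalized so that $\kappa'_b(z,0)=1$. It remains to compute $\kappa'_b$.

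The key computational step is formula \eqref{coefficients-kernel}: writing $\kappa'_b(z,w)=\sum_{m,n\geqslant 0}c_{m,n}z^m\overline w^n$, one has $c_{m,n}=\langle \mathscr M'^n_z 1,\mathscr M'^m_z 1\rangle$, the Gram matrix of the orbit of the normalized cyclic vector under the Cauchy dual. So I would compute $\langle \mathscr M'^n_z 1,\mathscr M'^m_z 1\rangle_{\mathcal H(b)}$. Here the standard theory of classical de Branges-Rovnyak spaces enters: for non-extreme $b$ with mate $a$, the space $\mathcal H(b)$ contains the constants, $\mathscr M_z$ is bounded, and $\mathscr M'^*_z=\mathscr L_z$ is the backward shift (Remark \ref{rmk-analytic}, \eqref{Lz-Cauchy}). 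One then uses the well-known relation, valid for non-extreme $b$, that $(I-\mathscr L_z\mathscr M_z)$ or rather the operator $T_{\bar a}$-type identities relate $\mathcal H(b)$ to the graph of multiplication by $\phi=b/a$ on $H^2$; concretely, $f\in\mathcal H(b)$ iff $f=g+\phi h$ in a suitable sense with $\|f\|^2_{\mathcal H(b)}=\|g\|^2+\|h\|^2$ under the compatibility $T_{\bar b}g=T_{\bar a}h$ — equivalently, $\mathcal H(b)$ is the pullback of $H^2\oplus H^2$ along $(T_{\bar b},-T_{\bar a})$ (see \cite[Chapter 23]{FM}). The upshot I expect is that the Cauchy dual space is exactly the range space of the column operator $\binom{1}{\phi}$ acting on $H^2$, whose reproducing kernel is $\dfrac{1+\phi(z)\overline{\phi(w)}}{1-z\overline w}$. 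So I would verify that $\mathscr M'_z$ is unitarily equivalent to $\mathscr M_z$ on $\mathcal H\!\left(\binom{1}{\phi}^{\sim}\right)$-type space by matching kernels, or better, by directly exhibiting the unitary: the map sending the orbit $\{\mathscr M'^n_z 1\}$ to the orbit of the kernel at $0$ in $\mathscr H_{\kappa'_b}$.

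A cleaner route, which I would actually carry out, is via Theorem \ref{main-thm-0}'s perspective combined with Lemma \ref{expansion-pd}: applying Lemma \ref{Cauchy dual space}'s hypotheses to $\mathcal H(b)$, the factorization $\eta_b(z,w)=z\overline w\,\kappa_b(z,w)-(\kappa_b(z,w)-1)=|b(z)|$-type $=b(z)\overline{b(w)}/(1-z\overline w)\cdot(\text{stuff})$ shows $\mathscr M_z$ on $\mathcal H(b)$ has symbol $b$; then one computes $\kappa'_b$ directly from the coefficients $B_{m+1+j}B^*_{n+1+j}$ using $b=a\phi$ and $|a|^2=1-|b|^2$ on $\mathbb T$, i.e. $1/|a|^2=1+|\phi|^2$ boundary-wise. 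The identity $\kappa'_b(z,w)=\sum_{m,n}\langle\mathscr M'^n_z1,\mathscr M'^m_z1\rangle z^m\overline w^n$ then telescopes, using $\mathscr M'^n_z1=z^n/a$-type representatives (since $1/a\in H^2$ as $a$ is outer, and $\phi=b/a\in\mathcal H(b)$), to $\sum_{m,n}\langle z^n/a,z^m/a\rangle_{H^2}z^m\overline w^n=\frac{1}{1-z\overline w}\widehat{(1/|a|^2)}$-type expression evaluated to $\frac{1+\phi(z)\overline{\phi(w)}}{1-z\overline w}$ via $1/|a|^2=1+|\phi|^2$ on $\mathbb T$.

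The main obstacle will be making precise the claim that $\mathscr M'^n_z$ applied to the normalized cyclic vector $1\in\mathcal H(b)$ corresponds to $z^n/a$ under the relevant identification — i.e. pinning down the model for $\mathscr M'_z$ as multiplication on a space of $H^2$-functions weighted by $1/|a|^2$, and verifying the isometry carefully using the structure of $\mathcal H(b)$ for non-extreme $b$ (the graph/range-space description and the fact that $a$ outer gives $1/a\in H^2$). Once that identification is in hand, the kernel formula is a one-line Fourier computation from $1/|a|^2=1+|\phi|^2$ a.e. on $\mathbb T$. I would also double-check the normalization $\kappa'_b(z,0)=1$, which holds since $\phi(0)=b(0)/a(0)=0$, consistent with assumption (A2) and with $\ker\mathscr M'^*_z$ being the constants.
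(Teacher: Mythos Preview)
Your framework is right—model $\mathscr M'_z$ via Shimorin and read off the kernel as a Gram matrix—but you have the Gram matrix backwards. Formula \eqref{coefficients-kernel} says the Taylor coefficients of a kernel $\kappa$ are $\langle \mathscr M'^n_z 1,\, \mathscr M'^m_z 1\rangle$, where $\mathscr M'_z$ is the Cauchy dual of multiplication by $z$ \emph{on} $\mathscr H_\kappa$. Applying this to $\mathscr H_{\kappa'_b}$, the multiplication operator there is unitarily equivalent to $\mathscr M'_z$ on $\mathcal H(b)$, so its Cauchy dual is $(\mathscr M'_z)' = \mathscr M_z$. Hence the coefficients of $\kappa'_b$ are
\[
\langle \mathscr M^n_z 1,\, \mathscr M^m_z 1\rangle_{\mathcal H(b)} \;=\; \langle z^n, z^m\rangle_{\mathcal H(b)},
\]
not $\langle \mathscr M'^n_z 1,\, \mathscr M'^m_z 1\rangle$ as you wrote; the latter would reproduce the coefficients of $\kappa_b$, not of $\kappa'_b$. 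Once this swap is made the proof is immediate, and is exactly what the paper does: the Gram matrix of monomials in $\mathcal H(b)$ is given explicitly in terms of the Taylor coefficients of $\phi = b/a$ by \cite[Lemma 3.2]{CGR}, and summing $\sum_{m,n}\langle z^m, z^n\rangle_{\mathcal H(b)}\, z^n \overline w^m$ yields $(1+\phi(z)\overline{\phi(w)})/(1-z\overline w)$ by a direct series computation. The paper packages the kernel-as-Gram-matrix step as \cite[Corollary 2.14]{S}.

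Your third-paragraph heuristic that $\mathscr M'^n_z 1$ corresponds to $z^n/a$, together with the boundary identity $1/|a|^2 = 1 + |\phi|^2$ on $\mathbb T$, does not go through as stated: $\mathscr M'^n_z 1$ is not $z^n/a$ in $\mathcal H(b)$, and no isometry onto a weighted $H^2$ is exhibited that would make this identification. The boundary identity is morally why $\langle z^m,z^n\rangle_{\mathcal H(b)}$ takes the form it does, but it enters through the structure theory of $\mathcal H(b)$ for non-extreme $b$ (as in \cite{CGR}), not through an ad hoc identification of the orbit of $1$ under $\mathscr M'_z$. Drop the primes, compute $\langle z^m,z^n\rangle_{\mathcal H(b)}$ from \cite{CGR}, and sum.
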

\begin{proof}  Let $\phi(z) = \sum_{j=0}^{\infty}c_j z^j$ for $z \in \mathbb D.$ By \cite[Lemma 3.2]{CGR}, 
\beq \label{gram}
\inp{z^m}{z^n}_{\mathcal H(b)} = \begin{cases} \delta_{m, n}   + \sum_{k=0}^n \overline{c_{ m-n+k}} {c_{ k}} & \mbox{if~} m \geqslant n, \\
 \sum_{j=0}^m \overline{c_{ j}} {c_{ n-m+j}} & \mbox{if~} m < n,
 \end{cases}
 \eeq
 Since $\ker \mathscr M^*_z$ is spanned by $1$ and $\mathscr M_z$ is cyclic with cyclic vector $1$ (see \cite{FM} and \cite[Theorems 1.3 and 5.5]{AM}), by \cite[Corollary 2.8]{S}, $\mathscr M'_z$ is analytic. 
 One may now apply \cite[Corollary 2.14]{S} to $T=\mathscr M'_z$ to conclude that $\kappa'_b$ is given by
 \beqn
 \kappa'_b(z, w) = \sum_{m, n \geqslant 0} \inp{z^m}{z^n}_{\mathcal H(b)} z^n \overline{w}^m, \quad z, w \in \mathbb D.
 \eeqn
It is now easy to see using \eqref{gram} that $\kappa'_b$ has the desired formula (see the proof of Theorem \ref{main-thm-0} for a similar argument). 
\end{proof}

The main result of this section provides an affirmative solution of the Cauchy dual subnormality problem for classical de Branges-Rovnyak spaces (cf. \cite[Corollary 3.6]{BS} and \cite[Theorem 3.3]{ACJS}).
\begin{theorem} \label{main-thm}
Let $b$ be a nonextreme point of the closed unit ball of $H^{\infty}(\mathbb D)$ such that $b(0)=0.$ If $\mathscr M_z$ on $\mathcal H(b)$ is concave, then the Cauchy dual $\mathscr M'_z$ of $\mathscr M_z$ is a subnormal contraction. Moreover, the following hold: 
\begin{enumerate}
\item[(i)] The sequence $\big\{\frac{\partial^m\overline{\partial}^n \kappa_b(0, 0)}{m!n!}\big\}_{m, n \geqslant 0}$ is a complex moment sequence with the representing measure $\mu$ given by 
\beqn
\Big(1 - \nu \Big(2\,\Re 
\Big(\frac{1}{1-e^{-i \theta}\beta}\Big)-1\Big)\Big) \frac{d\theta}{2\pi} +  \nu \, \delta_\beta,
\eeqn
where $\nu =   \frac{|\gamma|^2}{1-|\beta|^2}$ for some scalars $\gamma, \beta \in \mathbb C$ with $|\beta| < 1.$ 
\item[(ii)] $\mathscr M'_z$ is unitarily equivalent to the operator of multiplication by $z$ on the reproducing kernel Hilbert space $\mathscr H_{\kappa'_b},$ where $\kappa'_b$ is given by 
\beqn
 \kappa'_b(z, w) 
 = \frac{1 +  \frac{ |\gamma|^2 z \overline{w}}{(\rho - \sigma z)(\overline{\rho} - \overline{\sigma w})} }{1-z\overline{w}}, \quad z, w \in \mathbb D
 \eeqn
 for some scalars $\sigma, \rho \in \mathbb C.$
\end{enumerate}
\end{theorem}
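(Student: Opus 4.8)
The plan is to reduce everything to the rank-one case of the earlier machinery and then to a completely explicit computation. Since $b$ is non-extreme with $b(0)=0$, the de Branges--Rovnyak space $\mathcal H(b)$ is a rank-one de Branges--Rovnyak space and $\mathscr M_z$ on it is an analytic norm-increasing operator with $\dim\ker\mathscr M_z^*=1$. The concavity hypothesis says that $\mathscr M_z$ is a concave operator with one-dimensional defect. The crucial structural input I would invoke is the classification of such operators: a cyclic analytic $2$-isometry (or, more generally, concave operator) with $\rank(\mathscr M_z^*\mathscr M_z - I) = 1$ corresponds to $\mathcal H(b) = \mathcal D(\mu)$ for a measure $\mu$ supported at a single point of $\mathbb T$; equivalently, in the de Branges--Rovnyak picture, the symbol $b$ is (a Blaschke-type modification of) a single simple-pole rational function, so $b = p/(z-\alpha)$ with $p$ of degree $\le 1$, $|\alpha|>1$, and $b(0)=0$ forcing $p(0)=0$, i.e. $b(z) = \gamma z/(z-\alpha)$ for suitable $\gamma$. (A concave $\mathscr M_z$ that is not already a $2$-isometry is handled by the same reduction since concave $+$ one-dimensional defect still yields this rational form after normalization.) This is exactly the $k=1$ instance of Theorem \ref{higher-r-thm}, and for $k=1$ the positive semidefiniteness condition there is automatic (the matrix is a rank-one Gram matrix times a nonnegative scalar $(1-1/|\alpha|^2)^l \ge 0$), so $\mathscr M'_z$ is subnormal. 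This gives the first assertion, the subnormal contraction.

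For the explicit representing measure in (i), I would go through Theorem \ref{EFCDSP}(iii): subnormality of $\mathscr M'_z$ with one-dimensional $\mathcal E$ is equivalent to $\{\partial^m\overline\partial^n\kappa_b(0,0)/(m!n!)\}_{m,n\ge0}$ being a complex moment sequence, and the representing measure is $P_{\mathcal E}Q(\cdot)|_{\mathcal E}$ for the semispectral measure $Q$ of $\mathscr M'_z$. Concretely I would compute the coefficients from \eqref{formula-coeff}: with $b(z)=\gamma z/(z-\alpha) = -\gamma\sum_{l\ge0}z^{l+1}/\alpha^{l+1}$, one gets $B_{m+1} = -\gamma/\alpha^{m+1}$ (writing $\beta = 1/\alpha$, so $|\beta|<1$), hence $B_{m+1}\overline{B_{n+1}} = |\gamma|^2 \beta^{m+1}\overline\beta^{\,n+1}$. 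Then \eqref{formula-coeff} gives
\beqn
\frac{\partial^m\overline\partial^n\kappa_b(0,0)}{m!n!} = \delta_{m,n} - |\gamma|^2\frac{\beta^{|m-n|}\, (\beta\overline\beta)^{\min(m,n)+1}\cdot(\text{appropriate split})}{1-\beta\overline\beta},
\eeqn
which after summing the geometric series in the $\min(m,n)$ index is exactly the $(m,n)$ moment of the measure $\big(1-\nu(2\,\Re\frac{1}{1-e^{-i\theta}\beta}-1)\big)\frac{d\theta}{2\pi} + \nu\,\delta_\beta$ with $\nu = |\gamma|^2/(1-|\beta|^2)$. The verification that this candidate measure indeed reproduces those moments is a direct Fourier/Cauchy-kernel computation: $\int_{\mathbb T} e^{in\theta}\overline{e^{im\theta}}\,\frac{d\theta}{2\pi} = \delta_{m,n}$ accounts for the diagonal, $\int\beta^m\overline\beta^{\,n}\,\delta_\beta$ contributes the $\delta_\beta$ mass, and the density correction $\Re\frac{1}{1-e^{-i\theta}\beta}$ has Fourier coefficients $\tfrac12\beta^k$ (for $k\ge1$), which supplies precisely the off-diagonal terms. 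One must also check the density stays nonnegative on $\mathbb T$, which follows from $|\beta|<1$ and the concavity bound (equivalently $\nu\le$ an explicit quantity), so $\mu$ is a genuine positive measure.

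For (ii), I would apply Lemma \ref{Cauchy dual space}: with $\phi = b/a$, the mate $a$ of $b=\gamma z/(z-\alpha)$ is itself rational — solving $|a|^2 = 1-|b|^2$ on $\mathbb T$ with $a$ outer and $a(0)>0$ gives $a(z) = (\rho - \sigma z)/(z-\alpha)$ for suitable $\rho,\sigma\in\mathbb C$ (determined by $|\rho|^2 - |\alpha|^2 = $ const, etc.), so $\phi(z) = b(z)/a(z) = \gamma z/(\rho - \sigma z)$. Plugging into $\kappa'_b(z,w) = (1+\phi(z)\overline{\phi(w)})/(1-z\overline w)$ yields exactly the stated formula with $\frac{|\gamma|^2 z\overline w}{(\rho-\sigma z)(\overline\rho - \overline{\sigma w})}$ in the numerator. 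The main obstacle, and the step I expect to need the most care, is pinning down the exact rational normal form of $b$ (and of its mate $a$) from the concavity-plus-one-dimensional-defect hypothesis and tracking the constants $\gamma,\beta,\rho,\sigma$ so that the three descriptions — moment measure $\mu$, symbol $b$, and Cauchy-dual kernel $\kappa'_b$ — are mutually consistent; once the normal form is in hand, parts (i) and (ii) are bookkeeping with geometric series and Fourier coefficients. I would likely organize this by first proving the normal-form lemma (possibly deferring to the Section 6 results on finite-rank defects), then doing the two computations in parallel.
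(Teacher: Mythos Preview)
Your overall strategy matches the paper's: obtain a rational normal form for $b$, apply the $k=1$ case of Theorem~\ref{higher-r-thm} (equivalently Corollary~\ref{coro-p-zero}) for subnormality, then compute the moment sequence via \eqref{formula-coeff} and the Cauchy-dual kernel via Lemma~\ref{Cauchy dual space}. The explicit computations you sketch for parts (i) and (ii) are essentially what the paper does.

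The gap is in how you obtain the rational form $b(z)=\gamma z/(1-\beta z)$. You propose to derive it from the finite-rank defect classification of Section~6 (Theorem~\ref{lem-finite-rank-rep}), but that result is stated and proved for cyclic analytic \emph{$2$-isometries}, not for concave operators. Your parenthetical that ``concave $+$ one-dimensional defect still yields this rational form after normalization'' is exactly the nontrivial step, and it does not follow from Theorem~\ref{lem-finite-rank-rep}; a concave $\mathscr M_z$ on $\mathcal H(b)$ need not be a $2$-isometry, so Richter's model theorem is unavailable. The paper handles this by invoking an external structural result, \cite[Theorem~1]{KZ} (Kellay--Zarrabi), which characterizes precisely when $\mathscr M_z$ on $\mathcal H(b)$ is concave and shows directly that $b$ must have the form $\gamma z/(1-\beta z)$ (assuming $b(0)=0$). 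Likewise, the mate is identified via \cite[Lemma~6]{KZ} rather than by solving $|a|^2+|b|^2=1$ from scratch. Once you replace your Section-6 appeal with the Kellay--Zarrabi citation, the rest of your argument goes through; but as written, the normal-form step is unjustified in the genuinely concave (non-$2$-isometric) case.

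A minor point: your worry about nonnegativity of the density in (i) is resolved by the order of argument---subnormality is established first via Corollary~\ref{coro-p-zero}, so by Theorem~\ref{EFCDSP}(iii) the sequence \emph{is} a complex moment sequence with some positive representing measure, and the explicit computation then identifies that measure uniquely.
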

\begin{proof}
Assume that $\mathscr M_z$ on $\mathcal H(b)$ is concave.  By \cite[Theorem 1]{KZ} and the assumption $b(0)=0,$ there exist scalars $\gamma, \beta \in \mathbb C$ such that $|\beta| < 1$ and 
\beqn
{b}(z) = \frac{\gamma z}{1-\beta z}, \quad z \in \mathbb D.
\eeqn
It now follows from Corollary \ref{coro-p-zero} that $\mathscr M'_z$ is a subnormal contraction.

Note that 
$b(z)=  \sum_{n \geqslant 1}b_{n}z^n,$ where $b_n$ is given by
\beq \label{b-lambda}
b_{n} = \gamma
\beta^{n-1}, \quad n \geqslant 1.
\eeq
To see (i), note that
by \eqref{formula-coeff} and \eqref{b-lambda},
\beqn
\frac{\partial^m\overline{\partial}^n \kappa_b(0, 0)}{m!n!} &=& \begin{cases} \delta_{m, n} - |\gamma|^2 \beta^{ m-n} \frac{1-|\beta|^{2n}}{1-|\beta|^2}  & \mbox{if~} m \geqslant n, \\
\delta_{m, n}  -  |\gamma|^2 \overline{\beta}^{^{n-m}} \frac{1-|\beta|^{2m}}{1-|\beta|^2}  & \mbox{if~} m < n.
 \end{cases} \notag 
\eeqn
Since for any integer $\ell \geqslant 0,$
\beq \label{exp-real}
2\int_{\mathbb T} e^{i (m-n) \theta } \Re \Big(\frac{e^{-i \ell \theta}}{1-e^{-i \theta}\beta}\Big)\frac{d \theta}{2\pi} = \begin{cases} 
\beta^{m-n-\ell} & \mbox{if~} m > n, \\
2\delta_{0, \ell} & \mbox{if~}m=n,\\ 
\overline{\beta}^{n-m-\ell} & \mbox{if~} m < n.
\end{cases}
\eeq
it can be easily seen that $\mu,$ as given in part (i), is the representing measure of $\big\{\frac{\partial^m\overline{\partial}^n \kappa_b(0, 0)}{m!n!}\big\}_{m, n \geqslant 0}.$  

To see part (ii), note that by \cite[Lemma 6]{KZ}, the mate $a$ of $b$ is given by 
\beqn
a(z) = \frac{\rho - \sigma z}{1-\beta z}, \quad z \in \mathbb D,
\eeqn
for some $\rho, \sigma \in \mathbb C.$ The desired formula now 
follows from Lemma \ref{Cauchy dual space}.
\end{proof}
\begin{remark}
The conclusion of Theorem \ref{main-thm} can be extended to Schur functions which do not necessarily vanish at the origin. Indeed, by \cite[Lemma 4.7]{LGR}, $\mathscr M_z$ on $\mathcal H(b)$ is unitarily equivalent to the operator ${\mathscr M}_z$ of multiplication by $z$ on $\mathcal H(\tilde{b}),$ where $\tilde{b}(0)=0.$ Thus the Cauchy dual $\mathscr M'_z$ of any concave multiplication operator $\mathscr M_z$ on $\mathcal H(b)$ is a subnormal contraction. Moreover, by \cite[Theorem 1]{KZ},
\beqn
{b}(z) = \frac{c+\gamma z}{1-\beta z}, \quad z \in \mathbb D
\eeqn
for some $c, \gamma, \beta \in \mathbb C$ with $|\beta| < 1.$ One may now argue as above using \eqref{exp-real} to see that the sequence $\big\{\frac{\partial^m\overline{\partial}^n \kappa_b(0, 0)}{m!n!}\big\}_{m, n \geqslant 0}$ is a complex moment sequence with the representing measure $\mu$ given by
\beqn
\Big(1 - (\nu +|c|^2)\Big(2\,\Re 
\Big(\frac{1}{1-e^{-i \theta}\beta}\Big)-1\Big) - 2 \gamma \,\overline{c}\, \Re \Big(\frac{e^{-i\theta}}{1-e^{-i \theta}\beta}\Big)\Big) \frac{d\theta}{2\pi} +  \nu \, \delta_\beta,
\eeqn
where $\nu =   \frac{|c\beta+\gamma|^2}{1-|\beta|^2}$ for some scalars $c, \gamma, \beta \in \mathbb C$ with $|\beta| < 1.$ 
\end{remark}

We conclude this section with an application to Dirichlet-type spaces associated with measures supported at a point.
\begin{corollary} \label{point-one}
Let $\lambda$ be a point in the unit circle $\mathbb T$ and let $\tau$ be a positive number. Then the Cauchy dual $\mathscr M'_z$ of $\mathscr M_z$ on the 
Dirichlet-type space $\mathscr D(\tau \delta_\lambda)$ is a subnormal contraction. 
\end{corollary}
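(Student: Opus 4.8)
The plan is to recognize $\mathscr D(\tau\delta_\lambda)$ as a classical de Branges--Rovnyak space $\mathcal H(b)$, with equality of norms, on which $\mathscr M_z$ acts concavely, and then quote Theorem~\ref{main-thm}. The rotation $f(z)\mapsto f(\lambda z)$ is a unitary from $\mathscr D(\tau\delta_1)$ onto $\mathscr D(\tau\delta_\lambda)$ intertwining $\mathscr M_z$ with $\lambda\mathscr M_z$, hence $\mathscr M'_z$ with $\lambda\mathscr M'_z$; since subnormality and contractivity are preserved by multiplication by a unimodular scalar, one may assume $\lambda=1$, although the argument below is uniform in $\lambda$. First recall Richter's theorem: for any finite positive Borel measure $\mu$ on $\mathbb T$, the operator $\mathscr M_z$ on $\mathscr D(\mu)$ is a cyclic analytic $2$-isometry with $\|\mathscr M_z f\|^2=\|f\|^2+\int_{\mathbb T}|f|^2\,d\mu$; in particular it is concave, hence norm increasing, and $z$ is a multiplier. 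Moreover $\|1\|_{\mathscr D(\mu)}=1$, $\langle f,1\rangle_{\mathscr D(\mu)}=f(0)$ for all $f$ (so the reproducing kernel is normalized at the origin), and $\ker\mathscr M^*_z=\mathbb C\cdot 1$ is one dimensional; thus $(\mathscr D(\mu),\mathbb C)$ is a functional Hilbert space in the sense of (A1)--(A4), and Lemma~\ref{expansion-pd} yields a Hilbert space $\mathcal D$ and $B\in\mathcal S(\mathcal D,\mathbb C)$ with $B(0)=0$ and $\mathscr D(\mu)=\mathcal H(B)$ with equality of norms.

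Next I specialize to $\mu=\tau\delta_\lambda$. By Richter's formula the defect $\mathscr M^*_z\mathscr M_z-I$ is the positive operator with quadratic form $f\mapsto\tau|f(\lambda)|^2$ (where $f(\lambda)$ is the non-tangential boundary value), which has rank one. I claim this forces the realization above to be classical, i.e.\ that $\mathcal D$ may be taken one dimensional, so that $\mathscr D(\tau\delta_\lambda)=\mathcal H(b)$ for a scalar Schur function $b$ with $b(0)=0$; since $z$ is then a multiplier for $\mathcal H(b)$, the de Leeuw--Rudin theorem guarantees $b$ is a non-extreme point of the unit ball of $H^\infty(\mathbb D)$. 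Granting this, $\mathscr M_z$ on $\mathcal H(b)$ is unitarily equivalent to $\mathscr M_z$ on $\mathscr D(\tau\delta_\lambda)$, hence concave, and Theorem~\ref{main-thm} applies and gives that $\mathscr M'_z$ is a subnormal contraction, as asserted.

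The one genuine obstacle is the rank-one claim, namely that the positive kernel $\eta(z,w)=z\overline w\,\kappa(z,w)-\bigl(\kappa(z,w)-1\bigr)=B(z)B(w)^*$ built in the proof of Lemma~\ref{expansion-pd} has rank one. I expect this to follow by identifying $\eta$, via the identity $\mathscr M_z\mathscr M^*_z-P_{\operatorname{ran}\mathscr M_z}=\mathscr M_z\bigl(I-(\mathscr M^*_z\mathscr M_z)^{-1}\bigr)\mathscr M^*_z$ used there, with the Gram-type kernel of the positive operator $I-(\mathscr M^*_z\mathscr M_z)^{-1}=(\mathscr M^*_z\mathscr M_z)^{-1}(\mathscr M^*_z\mathscr M_z-I)$, whose rank, since $\mathscr M^*_z\mathscr M_z$ is invertible, equals $\operatorname{rank}(\mathscr M^*_z\mathscr M_z-I)=1$; as the kernels $\kappa(\cdot,w)x$ span $\mathscr D(\tau\delta_\lambda)$, the rank of $\eta$ coincides with this operator rank. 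If one prefers to sidestep this, an alternative route is to compute $b$ explicitly: from Richter's Gram-matrix formula for $\{z^n\}$ in $\mathscr D(\tau\delta_\lambda)$ together with \cite[Theorem~1]{KZ} one obtains $b(z)=\gamma z/(1-\beta z)$ with $|\beta|<1$, and then Corollary~\ref{coro-p-zero} applied with $k=1$ delivers subnormality at once, exactly as in the proof of Theorem~\ref{main-thm}.
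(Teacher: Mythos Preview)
Your approach is essentially the paper's: identify $\mathscr D(\tau\delta_\lambda)$ with a classical $\mathcal H(b)$ for a non-extreme $b$ with $b(0)=0$, note that $\mathscr M_z$ is concave (being a $2$-isometry), and invoke Theorem~\ref{main-thm}. The only difference is that the paper obtains the explicit $b(z)=\frac{\sqrt{\eta}\,\alpha\,\overline{\lambda}\,z}{1-\eta\,\overline{\lambda}\,z}$ by quoting \cite[Theorem~3.1]{CGR} directly rather than arguing abstractly via Lemma~\ref{expansion-pd} and the rank-one defect; your rank-one claim is correct and need not be hedged, since writing the rank-one positive operator $\mathscr M_z\bigl(I-(\mathscr M^*_z\mathscr M_z)^{-1}\bigr)\mathscr M^*_z=g\otimes g$ yields $\eta(z,w)=\langle (g\otimes g)\kappa(\cdot,w),\kappa(\cdot,z)\rangle=g(z)\overline{g(w)}$.
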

\begin{proof}
By \cite[Theorem 3.1]{CGR}, there exist $\alpha \in \mathbb C$ with $|\alpha|^2 = \tau$ and $\eta \in (0, 1)$ satisfying $\eta  + 1/\eta = 2 + \tau$ such that
$\mathscr D(\tau \delta_\lambda)$ coincides with the de Branges-Rovnyak space $\mathcal H(b_{\alpha, \eta}),$ where $b_{\alpha, \eta}$ is given by
\beqn
b_{\alpha, \eta}(z) = \frac{\sqrt{\eta} \alpha \overline{\lambda} z }{1-\eta \overline{\lambda}z}, \quad z \in \mathbb D.
\eeqn
Since $\mathscr M_z$ acting on $\mathscr D(\tau \delta_\lambda)$ is a $2$-isometry (see \cite[Theorem 3.7]{R1}), 
by Theorem \ref{main-thm}, the Cauchy dual of $\mathscr M_z$ is a subnormal contraction. 
\end{proof}

\section{Dirichlet-type spaces associated with finitely supported measures}

In this section, we  discuss cyclic analytic $2$-isometries $T$ with finite rank defect operators $\Delta_T:=T^*T-I$ and their relationship with Dirichlet-type spaces associated with finitely supported measures.  
In particular, we classify cyclic analytic $2$-isometries with finite rank defect operators $\Delta_T$ and also $\Delta_T$-regular cyclic analytic $2$-isometries. 
Recall that for any $2$-isometry, $\Delta_T$ is a positive operator (see \cite[Lemma 1]{R}). 
Following \cite{MMS}, we say that a norm increasing operator $T$ is {\it $\Delta_T$-regular} if $\Delta_T T = \Delta^{1/2}_T T \Delta^{1/2}_T.$

\begin{theorem} \label{lem-finite-rank-rep}
Let $T \in \mathcal B(\mathcal H)$ be a cyclic analytic $2$-isometry and let $\Delta_T:=T^*T-I$. Then the following statements are true:
\begin{enumerate}
\item[(i)] Let $k$ be a positive integer. The rank of $\Delta_T$ is $k$ if and only if there exist distinct points $\zeta_1, \ldots, \zeta_k$ on the unit circle and positive numbers $c_1, \ldots, c_k$ such that $T$ is unitarily equivalent to the multiplication operator $\mathscr M_z$ on $\mathscr D(\mu),$ where $\mu= \sum_{j=1}^k c_j \delta_{\zeta_j}.$  
\item[(ii)] The operator $T$ is $\Delta_T$-regular
if and only if $T$ is unitarily equivalent to the multiplication operator $\mathscr M_z$ on $\mathscr D(c\, \delta_\zeta)$ for some scalar $c \geqslant 0$ and $\zeta \in \mathbb T.$
\end{enumerate}
\end{theorem}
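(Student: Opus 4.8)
Everything is driven by the concrete model. By Richter's model theorem (\cite[Theorems 3.7 and 5.1]{R1}, quoted above) a cyclic analytic $2$-isometry $T$ is unitarily equivalent to $\mathscr M_z$ on $\mathscr D(\mu)$ for a finite positive Borel measure $\mu$ on $\mathbb T$, and for polynomials one has Richter's identity $\|\mathscr M_z p\|^2_{\mathscr D(\mu)}-\|p\|^2_{\mathscr D(\mu)}=\int_{\mathbb T}|p|^2\,d\mu$; polarizing, $\Delta_T=J^*J$, where $J:\mathscr D(\mu)\to L^2(\mu)$ is the bounded ``boundary values'' map $Jf=f^*$. Hence $\operatorname{rank}\Delta_T=\operatorname{rank}J=\dim\overline{\operatorname{ran}J}$, and since polynomials are dense in $\mathscr D(\mu)$ this equals $\dim\mathcal P^2(\mu)$, the closed span of $\{1,z,z^2,\dots\}$ in $L^2(\mu)$. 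If $\operatorname{supp}\mu$ has at least $N$ points, a Vandermonde argument shows $1,z,\dots,z^{N-1}$ are linearly independent in $L^2(\mu)$; so $\dim\mathcal P^2(\mu)$ is finite and equal to $k$ precisely when $\mu$ is purely atomic with exactly $k$ atoms $\zeta_1,\dots,\zeta_k\in\mathbb T$ (and then $\mathcal P^2(\mu)=L^2(\mu)=\mathbb C^k$), the weights $c_j=\mu(\{\zeta_j\})$ being automatically positive. Combined with the model theorem this proves (i), the converse being the same computation read backwards.

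For the ``if'' part of (ii): when $\mu=c\,\delta_\zeta$ with $c=0$ then $\mathscr D(\mu)=H^2$, $\Delta_T=0$, and regularity is vacuous. When $c>0$, part (i) gives $\operatorname{rank}\Delta_T=1$; putting $u=J^*\mathbf 1_{\{\zeta\}}$ one has $\overline{\operatorname{ran}\Delta_T}=\mathbb C u$, $\langle f,u\rangle_{\mathscr D(\mu)}=c\,f^*(\zeta)$ for all $f$, and hence $\mathscr M_z^*u=\overline\zeta\,u$. For any rank-one positive operator $\Delta=\lambda\,(u\otimes u)$ one has $\Delta^{1/2}=\lambda^{1/2}\|u\|^{-1}(u\otimes u)$, so a one-line computation using only $\mathscr M_z^*u=\overline\zeta u$ (whence $\langle\mathscr M_z u,u\rangle=\langle u,\mathscr M_z^*u\rangle=\zeta\|u\|^2$) yields $\Delta_T\mathscr M_z=\Delta_T^{1/2}\mathscr M_z\Delta_T^{1/2}$. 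Thus $\mathscr M_z$ on $\mathscr D(c\,\delta_\zeta)$ is $\Delta_T$-regular.

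For the ``only if'' of (ii) I would first isolate the abstract mechanism: taking adjoints, $T$ is $\Delta_T$-regular iff $T^*\Delta_T=\Delta_T^{1/2}T^*\Delta_T^{1/2}$, and splitting along $\mathscr H=\mathcal R\oplus\mathcal R^\perp$ with $\mathcal R:=\overline{\operatorname{ran}\Delta_T}=(\ker\Delta_T)^\perp$ one checks this is equivalent to: $\mathcal R$ is $T^*$-invariant and $T^*|_{\mathcal R}$ commutes with the (bounded, positive, injective) operator $\Delta_T|_{\mathcal R}$ — the last step being the elementary fact (cf.\ \cite{MMS}) that $SA=A^{1/2}SA^{1/2}$ forces $SA^{1/2}=A^{1/2}S$, obtained by evaluating both sides on the dense subspace $\operatorname{ran}A^{1/2}$ of $\mathcal R$. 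In the model $\mathcal R^\perp=\ker J=\{f:f^*=0\ \mu\text{-a.e.}\}$ is visibly $\mathscr M_z$-invariant, so $\mathcal R$ is automatically $\mathscr M_z^*$-invariant and only the commutation $[\mathscr M_z^*|_{\mathcal R},\Delta_T|_{\mathcal R}]=0$ carries content. Now suppose $\operatorname{rank}\Delta_T=k<\infty$; by (i), $\mu=\sum_{j=1}^k c_j\delta_{\zeta_j}$ and $\mathcal R=\operatorname{span}\{g_1,\dots,g_k\}$ with $g_j=J^*\mathbf 1_{\{\zeta_j\}}$, $\langle f,g_j\rangle=c_j f^*(\zeta_j)$, and hence $\mathscr M_z^*g_j=\overline{\zeta_j}\,g_j$. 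Since the $\overline{\zeta_j}$ are distinct, any operator commuting with $\mathscr M_z^*|_{\mathcal R}$ must be diagonal in the basis $\{g_j\}$; writing $\Delta_T f=J^*(f^*)=\sum_j f^*(\zeta_j)g_j$, the relation $\Delta_T g_i=d_i g_i$ forces $g_i^*(\zeta_j)=0$, i.e.\ $\langle g_i,g_j\rangle=0$, for all $i\neq j$. Identifying $\mathscr D(\mu)$ with a finite rank de Branges-Rovnyak space $\mathcal H(B)$ (see Theorem \ref{Diri-de B}) one has $g_j=c_j\,\kappa_B(\cdot,\zeta_j)$ and $\|B(\zeta_j)\|=1$ (evaluation at the atom $\zeta_j$ being bounded), so the orthogonality of the $g_j$ is equivalent to $B(\zeta_1)=\cdots=B(\zeta_k)$; one then checks, using the explicit symbol $B$ furnished by the algorithm in Section~6 together with $B(0)=0$ and the normalization tied to $\mu$, that $B$ cannot take a common value at $k\ge 2$ distinct boundary points. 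Hence $k\le 1$, so $\mu=0$ or $\mu=c_1\delta_{\zeta_1}$, giving the ``only if'' of (ii) modulo the infinite-rank case.

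The step I expect to be the main obstacle is twofold. First, one must rule out a $\Delta_T$-regular cyclic analytic $2$-isometry with $\Delta_T$ of \emph{infinite} rank: the reduction above still realizes $\mathscr M_z^*|_{\mathcal R}$ as (unitarily equivalent to) the compression of $M_{\bar\zeta}$ to $\mathcal P^2(\mu)$, and one must exploit $[\mathscr M_z^*|_{\mathcal R},\Delta_T|_{\mathcal R}]=0$ — or the eigenvector/orthogonality mechanism applied to the atoms of $\mu$ — to force $\Delta_T$ to have finite rank so that part (i) applies. Second, the non-orthogonality of the boundary reproducing kernels $\kappa_B(\cdot,\zeta_j)$ (equivalently $B(\zeta_i)\neq B(\zeta_j)$ for $i\neq j$) is \emph{not} a formal consequence of $B$ being rational of the stated degree; it genuinely uses the specific symbol, so this last verification must be carried out with the explicit numerators $p_j$ and poles $\alpha_\ell$ supplied by Theorem \ref{Diri-de B} and, in the antipodal case, by Proposition \ref{Sarason-2}.
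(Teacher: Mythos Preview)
Your argument for part (i) is correct and in fact cleaner than the paper's. You go directly from Richter's identity to $\Delta_T=J^*J$ and read off $\operatorname{rank}\Delta_T=\dim\mathcal P^2(\mu)$, which a Vandermonde argument identifies with the number of atoms of $\mu$. The paper instead works with the block decomposition $\mathscr M_z=\Big(\begin{smallmatrix}S & E\\ 0 & W\end{smallmatrix}\Big)$, $\Delta_{\mathscr M_z}=\Big(\begin{smallmatrix}0 & 0\\ 0 & D\end{smallmatrix}\Big)$ relative to $\ker\Delta_{\mathscr M_z}\oplus\overline{\operatorname{ran}\Delta_{\mathscr M_z}}$, diagonalizes the finite-dimensional unitary $D^{1/2}WD^{-1/2}$, computes the trigonometric moments $\int_{\mathbb T}\zeta^n\,d\mu$ in this basis, and invokes uniqueness of the trigonometric moment problem. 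Your route is shorter and avoids this machinery; the paper's route, however, has already set up the block decomposition that it needs for (ii).

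Your ``if'' direction in (ii) is fine. The genuine gap is exactly where you flag it: the ``only if'' direction. You reduce regularity to the commutation $[\mathscr M_z^*|_{\mathcal R},\Delta_T|_{\mathcal R}]=0$, deduce orthogonality of the boundary kernels $\kappa_{\zeta_j}$, and then try to rule this out for $k\geqslant 2$ by appealing to the explicit symbol $B$. Neither that last step nor the infinite-rank case is actually carried out, and the symbol computation (even granting Theorem~\ref{Diri-de B}) would be unpleasant for general $k$. The paper bypasses both obstacles with a one-line structural argument: by \cite[Proposition~5.1]{MMS}, $\Delta_T$-regularity forces the off-diagonal block $E:\overline{\operatorname{ran}\Delta_{\mathscr M_z}}\to\ker\Delta_{\mathscr M_z}$ to be injective; since $S^*E=0$ and, by \cite[Theorem~3.2]{RS}, $\dim\ker S^*=1$, the range of $E$ is at most one-dimensional, hence so is $\overline{\operatorname{ran}\Delta_{\mathscr M_z}}$. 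Part~(i) then finishes. This handles the infinite-rank case automatically and never touches the orthogonality of boundary kernels. You should replace your (ii) ``only if'' by this argument.
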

\begin{proof} In view of the model theorem of Richter for cyclic analytic $2$-isometries (see \cite[Theorem 5.1]{R1}), we may assume that $T=\mathscr M_z$ acting on a Dirichlet-type space $\mathscr D(\mu)$ for some finite positive Borel measure $\mu$ on the unit circle $\mathbb T$. One may argue as in the proof of \cite[Theorem 1.26]{Ag-St} to see that with respect to the decomposition $\mathscr D(\mu) = \ker \Delta_{\mathscr M_z} \oplus \overline{\mbox{ran}\,\Delta_{\mathscr M_z}},$ $\mathscr M_z$ and $\Delta_{\mathscr M_z}$ decompose as follows:
\beq
 \label{matrix-d}
 \mathscr M_z = \Big(\begin{smallmatrix}
S & E \\
0 & W 
\end{smallmatrix}\Big),  \quad
\Delta_{\mathscr M_z} = \Big(\begin{smallmatrix}
0 & 0 \\
0 & D 
\end{smallmatrix} \Big),
\eeq
where $S$ is an isometry, $S^*E=0$ and $D$ is an injective positive operator such that $W^*DW=D.$

(i) Assume that $\mu= \sum_{j=1}^k c_j \delta_{\zeta_j}.$ 
It is well known that the point evaluation at $\zeta_j,$ $j=1, \ldots, k,$ is bounded on $\mathscr D(\mu)$ (see \cite[Corollary 2.3]{RS}). Thus there exist linearly independent vectors
 $\kappa_{\zeta_1}, \ldots,  \kappa_{\zeta_k} \in \mathscr D(\mu)$ such that 
 \beqn
 \inp{f}{\kappa_{\zeta_j}}=f(\zeta_j), \quad f \in \mathscr D(\mu), ~j=1, \ldots, k.  
 \eeqn
By \cite[Corollary 2.3]{RS}, for any $f \in \mathscr D(\mu),$
 \beqn
 \inp{\Delta_{\mathscr M_z} f}{f} = \|zf\|^2 - \|f\|^2 = \int_{\mathbb T} |f(\zeta)|^2 d\mu(\zeta) = \sum_{j=1}^k c_j |\inp{f}{\kappa_{\zeta_j}}|^2.
 \eeqn
It follows that 
 \beq \label{range-defect}
\mbox{ran}\,\Delta_{\mathscr M_z}  = \bigvee\{\kappa_{\zeta_j} : j=1, \ldots, k\}.
\eeq 
To see the converse, assume that the range of $\Delta_{\mathscr M_z}$ is $k$-dimensional.  Note that $D$ is an invertible operator on the finite dimensional Hilbert space $\mbox{ran}\,\Delta_{\mathscr M_z}.$ Hence, $D^{1/2}WD^{-1/2}$ is a unitary operator on $\mbox{ran}\,\Delta_{\mathscr M_z}.$ Consider the orthonormal basis of $\mbox{ran}\,\Delta_{\mathscr M_z}$ consisting of eigenvectors $v_j$ of $D^{1/2}WD^{-1/2}$ corresponding to the eigenvalue $\zeta_j \in \mathbb T,$ $j=1, \ldots, k.$   
By \cite[Corollary 2.3]{RS} and the polarization identity,
\beq \label{6.2}
\inp{\Delta_{\mathscr M_z} \mathscr M^n_z1}{1} = \inp{z^{n+1}}{z} - 
\inp{z^n}{1} = \int_{\mathbb T}\zeta^n d\mu(\zeta), \quad n \geqslant 0.
\eeq
Let $P$ denote the orthogonal projection of $\mathscr D(\mu)$ onto
$\mbox{ran}\,\Delta_{\mathscr M_z}.$  Since the set $\{D^{-1/2}v_j : j=1, \ldots, k\}$ forms a basis for $\mbox{ran}\,\Delta_{\mathscr M_z},$ there exist scalars $\lambda_j \in \mathbb C$ such that 
$P1 = \sum_{j=1}^k \lambda_j D^{-1/2}v_j.$ 
It now follows from \eqref{6.2} and \eqref{matrix-d} that for any integer $n \geqslant 0,$
\beqn
\int_{\mathbb T}\zeta^n d\mu(\zeta) &=& \inp{DW^nP1}{P1} \\
&=& \sum_{i, j=1}^k \lambda_i \overline{\lambda}_j \inp{D^{1/2}W^n D^{-1/2}v_i}{v_j} \\
&=& \sum_{i, j=1}^k \lambda_i \overline{\lambda}_j  \zeta^n_i \inp{v_i}{v_j} \\
&=& \sum_{i=1}^k |\lambda_i|^2  \zeta^n_i.
\eeqn
By the uniqueness of the trigonometric moment problem (which in turn follows from the density of trigonometric polynomials and the Riesz representation theorem \cite{Ru}), $\mu$ is equal to $\sum_{i=1}^k |\lambda_i|^2 \delta_{\zeta_i}.$ Since the range of $\Delta_{\mathscr M_z}$ is $k$-dimensional, by the first half, $\zeta_1, \ldots, \zeta_k$ are all distinct. This completes the proof of (i).

(ii) If $\mathscr M_z$ is an isometry, then the equivalence holds with $c=0.$
Suppose that $\mu = c\, \delta_\zeta$ for some scalar $c > 0$ and $\zeta \in \mathbb T.$ By (i), 
 $\Delta_{\mathscr M_z}$ is one-dimensional, and hence by \cite[Corollary 3.6]{BS},
$\mathscr M_z$ is $\Delta_{\mathscr M_z}$-regular. This yields the sufficiency part. 
To see the converse, assume now that $\Delta_{\mathscr M_z}$ is nonzero and $\mathscr M_z$ is $\Delta_{\mathscr M_z}$-regular. Hence, by
\cite[Proposition 5.1]{MMS}, $E$ as appearing in \eqref{matrix-d} is a one-to-one map from $\overline{\mbox{ran}\,\Delta_{\mathscr M_z}}$ into $\ker \Delta_{\mathscr M_z}$
such that $S^*E=0.$ However, by \cite[Theorem 3.2]{RS}, $\ker S^*$ is of dimension $1.$ It follows that the range of $E$ is at most of dimension $1.$ Since $E$ is injective, $\mbox{ran}\,\Delta_{\mathscr M_z}$ is one dimensional. The conclusion now follows from (i).
\end{proof}
\begin{remark} We note the following:
\begin{enumerate}
\item[(1)] It is worth mentioning that part (ii) above is applicable to the {\it Brownian shift} $B_{\sigma, e^{i \theta}}$ of covariance $\sigma > 0$ and angle $\theta,$ as introduced and studied in \cite{Ag-St} (the reader is referred to \cite[Section 5]{Ag-St} for the definition of the Brownian shift).
\item[(2)] Let $\mu$ be a finite Borel positive measure. If $\mathscr D(\mu)$ coincides with a de Branges-Rovnyak space $\mathcal H(B)$ of rank $k,$ with equality of norms, then it may be deduced from \cite[Lemma 5.1]{LGR} that $\Delta_{\mathscr M_z}$ is $k$-dimensional, and hence by Theorem \ref{lem-finite-rank-rep}, there exist $c_1, \ldots, c_k > 0$ and $\zeta_1, \ldots, \zeta_k \in \mathbb T$ such that $\mu = \sum_{j=1}^k c_j \delta_{\zeta_j}$ (cf. \cite[Proposition 2]{Sa} and \cite[Theorem 3.1]{CGR}).
\end{enumerate} 
\end{remark}

In operator theoretic terms, the following fact recovers
a special case of \cite[Corollary 3.6]{BS}. 
\begin{corollary}
Let $T$ be a cyclic analytic $2$-isometry on $\mathcal H.$ If the range of $T^*T-I$ is at most one-dimensional, then the Cauchy dual operator $T'$ of $T$ is a subnormal contraction.
\end{corollary}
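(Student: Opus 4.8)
The plan is to reduce this corollary directly to the machinery already developed in the paper. First I would observe that a cyclic analytic $2$-isometry $T$ on $\mathcal H$ is, by Richter's model theorem \cite[Theorem 5.1]{R1}, unitarily equivalent to the multiplication operator $\mathscr M_z$ on some Dirichlet-type space $\mathscr D(\mu)$, and that the Cauchy dual is a unitary invariant. So it suffices to handle $\mathscr M_z$ on $\mathscr D(\mu)$.

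Next I would split into two cases according to the rank of $\Delta_T = T^*T - I$, which by hypothesis is $0$ or $1$. If $\Delta_T = 0$, then $T$ is an isometry, hence its Cauchy dual $T' = T(T^*T)^{-1} = T$ is itself an isometry, and isometries are trivially subnormal contractions; nothing more is needed. If $\Delta_T$ has rank exactly $1$, then by Theorem \ref{lem-finite-rank-rep}(i) there exist $\zeta \in \mathbb T$ and $c > 0$ with $T$ unitarily equivalent to $\mathscr M_z$ on $\mathscr D(c\,\delta_\zeta)$. At this point I would simply invoke Corollary \ref{point-one}, which asserts precisely that the Cauchy dual of $\mathscr M_z$ on $\mathscr D(\tau\delta_\lambda)$ is a subnormal contraction for any $\lambda \in \mathbb T$ and $\tau > 0$. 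Combining the two cases gives the claim.

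The only mild subtlety — and it is not really an obstacle — is making sure the rank-$1$ reduction is legitimate: $\Delta_T$ for a $2$-isometry is automatically a positive operator (cited from \cite[Lemma 1]{R}), so "range at most one-dimensional" is the same as "rank of $\Delta_T$ is $0$ or $1$", which is exactly the dichotomy fed into Theorem \ref{lem-finite-rank-rep}(i). One could alternatively shortcut the rank-$1$ case through Theorem \ref{lem-finite-rank-rep}(ii): a cyclic analytic $2$-isometry with one-dimensional $\Delta_T$ is $\Delta_T$-regular, landing us in the classical de Branges-Rovnyak picture of Theorem \ref{main-thm} via Corollary \ref{point-one}; either route closes the argument. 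I expect the proof to be only a few lines, essentially a bookkeeping assembly of Theorem \ref{lem-finite-rank-rep} and Corollary \ref{point-one}, with no new estimates required.
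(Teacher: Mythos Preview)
Your proposal is correct and follows essentially the same route as the paper: reduce via Theorem~\ref{lem-finite-rank-rep} to $\mathscr M_z$ on $\mathscr D(c\,\delta_\zeta)$ and then invoke Corollary~\ref{point-one}. The paper compresses the rank-$0$ and rank-$1$ cases into the single statement ``$c \geqslant 0$'', whereas you treat the isometry case separately, but the substance is identical.
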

\begin{proof}
If the range of $T^*T-I$ is at most one-dimensional, then by Theorem \ref{lem-finite-rank-rep}, $T$ is unitarily equivalent to the multiplication operator $\mathscr M_z$ on $\mathscr D(c\, \delta_\zeta)$ for some scalar $c \geqslant 0$ and $\zeta \in \mathbb T.$ The desired conclusion now follows from Corollary \ref{point-one}. 
\end{proof}

The second result of this section describes the de Branges-Rovnyak model space of the Dirichlet-type spaces associated with finitely supported measures. We capitalize below on the algorithm for computing the reproducing kernel for Dirichlet-type spaces associated with finitely supported measures, as presented in \cite{C}. A variant of this fact may also be deduced from \cite[Theorems 4.6 $\&$ 6.2 and Lemma 8.3]{LGR}. 
\begin{theorem} \label{Diri-de B} 
For positive scalars $c_1, \ldots, c_k$ and distinct points $\zeta_1, \ldots, \zeta_k$ on the unit circle $\mathbb T,$ consider the positive Borel measure $\mu = \sum_{j=1}^k c_j \delta_{\zeta_j},$ where $\delta_{\zeta_j}$ denotes the Dirac delta measure supported at $\zeta_j.$ 
Let $X(z)=(z, \ldots, z^k)^T$ and $\{e_j\}_{j=1}^k$ denote the standard basis of $\mathbb C^k.$ Then there exist $\alpha_1, \ldots, \alpha_k \in \mathbb C \setminus \overline{\mathbb D},$ unique up to permutation, and a $k \times k$ upper triangular matrix $P$ such that 
the Dirichlet-type space
$D(\mu)$ coincides with the de Branges-Rovnyak space $\mathcal H(B)$ with equality of norms, where $B=(\frac{p_1}{q}, \cdots, \frac{p_k}{q})$  and $$p_j(z) = \inp{PX(z)}{e_j} \quad j=1, \ldots, k, \quad q(z) = \prod_{j=1}^k (z-\alpha_j), \quad z \in \mathbb D.$$ 
Moreover, $\alpha_1, \ldots, \alpha_k$ are governed by 
\beq \label{Costara}
\prod_{j=1}^k |z-\zeta_j|^2 + \sum_{j=1}^k c_j \prod_{\substack{l=1, \ldots, k \\ l \neq j}}|z-\zeta_l|^2  = \gamma \prod_{j=1}^k |z-\alpha_j|^2, \quad z \in \mathbb T
\eeq 
 for some $\gamma >0.$  
\end{theorem}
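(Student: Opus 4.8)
The plan is to recognize $D(\mu)$ as a de Branges--Rovnyak space via the model theorem, read off the rational kernel of $D(\mu)$, normalize the symbol using the unitary freedom in the model, and extract \eqref{Costara} from boundary behaviour. By Richter's model theorem (see \cite[Theorem~5.1]{R1}) we may take $T=\mathscr M_z$ on $D(\mu)$, a cyclic analytic $2$-isometry; in particular $\mathscr M_z$ is analytic, left-invertible and norm increasing. A function in $D(\mu)$ vanishing at the origin is orthogonal to the constants (the Dirichlet term sees only $f'$), so $D(\mu)$ is normalized at $0$, $zD(\mu)=\{f\in D(\mu):f(0)=0\}$, and the orthogonal complement of $zD(\mu)$ is the space of constants; hence $(D(\mu),\mathbb C)$ is a functional Hilbert space. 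By Lemma~\ref{expansion-pd} there is $B\in\mathcal S(\mathcal D,\mathbb C)$ with $D(\mu)=\mathcal H(B)$ and $B(0)=0$, and since $\Delta_{\mathscr M_z}$ has rank $k$ (Theorem~\ref{lem-finite-rank-rep}(i)) we may take $\mathcal D=\mathbb C^k$, $B=(b_1,\dots,b_k)$ (see also \cite[Lemma~5.1]{LGR}). Feeding in the algorithm of \cite{C} (equivalently \cite[Theorems~4.6 \& 6.2 and Lemma~8.3]{LGR}), the reproducing kernel $\kappa_\mu$ of $D(\mu)$ is rational with denominator $(1-z\overline w)\,q(z)\,\overline{q(w)}$ for a monic polynomial $q$ of degree $k$; as each $b_j$ is a rational Schur function, hence holomorphic on $\overline{\mathbb D}$, after cancellation $q$ has all its zeros in $\mathbb C\setminus\overline{\mathbb D}$. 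Comparing with $\kappa_\mu(z,w)=\dfrac{1-B(z)B(w)^*}{1-z\overline w}$ gives $b_j=p_j/q$ for polynomials $p_j$ of degree $\le k$ with $p_j(0)=0$.

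Next, since $\mathcal H(BU)=\mathcal H(B)$ for every $k\times k$ unitary $U$, and this operation fixes $q$ and replaces $(p_1,\dots,p_k)$ by $(p_1,\dots,p_k)U$, the $k\times k$ matrix of coefficients of the $p_j$ is determined only up to right multiplication by a unitary; a $QR$ decomposition lets us choose $U$ so that this matrix becomes upper triangular. Equivalently, we may assume $p_j(z)=\langle PX(z),e_j\rangle$ with $X(z)=(z,\dots,z^k)^T$ and $P$ upper triangular, which is the asserted form of $B$.

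It remains to identify $q$, equivalently to prove \eqref{Costara}. As $q$ has no zero on $\overline{\mathbb D}$, the function $(1-|z|^2)\kappa_\mu(z,z)=1-\sum_j|b_j(z)|^2$ extends continuously to $\overline{\mathbb D}$ and, on $\mathbb T$, equals $\bigl(|q(z)|^2-\sum_j|p_j(z)|^2\bigr)/|q(z)|^2$. On the other hand, using $\kappa_\mu(w,w)=\sup_{f\ne 0}|f(w)|^2/\|f\|_{D(\mu)}^2$, the identity $\|f\|_{D(\mu)}^2=\|f\|_{H^2}^2+\sum_j c_j\bigl\|(f-f(\zeta_j))/(z-\zeta_j)\bigr\|_{H^2}^2$ (Richter--Sundberg, see \cite[Corollary~2.3]{RS}) and the boundedness of point evaluation at each $\zeta_j$ on $D(\mu)$ (which forces $|B(\zeta_j)|=1$), one identifies this boundary limit with $\bigl(1+\sum_j c_j/|z-\zeta_j|^2\bigr)^{-1}$ on $\mathbb T$; this is also what the kernel of \cite{C} gives. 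Multiplying the numerator and denominator of the latter by $\prod_l|z-\zeta_l|^2$ and equating the two rational functions on $\mathbb T$ --- using that $\prod_l|z-\zeta_l|^2$ vanishes simply exactly at the $\zeta_l$ whereas $|q(z)|^2$ and $\prod_l|z-\zeta_l|^2+\sum_j c_j\prod_{l\ne j}|z-\zeta_l|^2$ do not --- forces $|q(z)|^2$ to be a positive scalar multiple of $\prod_l|z-\zeta_l|^2+\sum_j c_j\prod_{l\ne j}|z-\zeta_l|^2$ on $\mathbb T$, i.e. \eqref{Costara}. The right-hand side of \eqref{Costara} is a strictly positive trigonometric polynomial of degree exactly $k$ (its degree-$k$ part comes only from $\prod_l|z-\zeta_l|^2$, since each $c_j$-term has degree $k-1$); by the Fej\'er--Riesz theorem it factors as $\gamma\prod_{j=1}^k|z-\alpha_j|^2$ on $\mathbb T$ with $\gamma>0$ and $\alpha_1,\dots,\alpha_k\in\mathbb C\setminus\overline{\mathbb D}$, uniquely up to a permutation, which determines $q$ (up to a unimodular constant, absorbed into the $p_j$) and the multiset $\{\alpha_1,\dots,\alpha_k\}$.

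I expect the main obstacle to be the first and third steps together: establishing the exact rational shape of $\kappa_\mu$ and the boundary identity $\lim_{z\to e^{i\theta}}(1-|z|^2)\kappa_\mu(z,z)=\bigl(1+\sum_j c_j/|e^{i\theta}-\zeta_j|^2\bigr)^{-1}$. The only inequality one gets for free (by testing against the Hardy kernel) is ``$\geqslant$'', and the reverse requires either the precise kernel formula from \cite{C} or an extremality argument using $|B(\zeta_j)|=1$. Once these are in hand the $QR$-normalization and the Fej\'er--Riesz step are routine; for $k=1$ everything reduces to \cite[Theorem~3.1]{CGR}.
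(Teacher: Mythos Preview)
Your approach is essentially the paper's: both pass through Costara's explicit kernel formula \cite{C} to see that $q(z)\,\eta(z,w)\,\overline{q(w)}$ (with $\eta=BB^*$ coming from Lemma~\ref{expansion-pd}) is a polynomial positive semi-definite kernel in $z,\overline w$, and then factor its coefficient matrix to produce the $p_j$. Your QR step is the paper's Cholesky step, and your Fej\'er--Riesz uniqueness is the paper's maximum-modulus argument; the only organizational difference is that the paper cites \eqref{Costara} from \cite{C} at the outset rather than re-deriving it at the end.

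There is, however, a logical slip in the ordering of your first two paragraphs. Knowing that $B(z)B(w)^*=\eta(z,w)$ is rational with denominator $q(z)\overline{q(w)}$ does \emph{not} let you conclude that the individual $b_j$ are rational; the clause ``as each $b_j$ is a rational Schur function'' is unjustified, and the argument for the location of the zeros of $q$ that rests on it is circular (Costara's $q$ already has its zeros in $\mathbb C\setminus\overline{\mathbb D}$ by construction). What you actually have is $q(z)\,\eta(z,w)\,\overline{q(w)}=\langle AX(z),X(w)\rangle$ for some positive semi-definite $k\times k$ matrix $A$; the factorization $A=P^*P$ with $P$ upper triangular (Cholesky, equivalently your QR on any square root of $A$) must come \emph{first}, and only then may you define $p_j(z)=\langle PX(z),e_j\rangle$ and set $b_j=p_j/q$. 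With that reordering your argument is the paper's. Finally, your boundary-behaviour derivation of \eqref{Costara} is redundant: you already invoke \cite{C} for the rational shape of $\kappa_\mu$, and Costara obtains $q$ precisely as the Fej\'er--Riesz factor of the left-hand side of \eqref{Costara}, so that identity is part of the input rather than something to be recovered a posteriori (you concede as much in your last paragraph).
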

\begin{proof} It has been observed in \cite{C} that there exist $\alpha_1, \ldots, \alpha_k \in \mathbb C \setminus \overline{\mathbb D}$ and $\gamma >0$ such that \eqref{Costara} holds. Two applications of maximum modulus theorem shows that  $\alpha_1, \ldots, \alpha_k \in \mathbb C \setminus \overline{\mathbb D}$ are unique up to permutation.
Let $p(z) = \frac{e^{i \theta}}{\sqrt{\gamma}}   \prod_{j=1}^k (z-\zeta_j),$ where $\theta \in \mathbb R$ is chosen such that $\frac{p(0)}{q(0)}> 0.$
Since the multiplication operator $\mathscr M_z$ on $\mathscr D(\mu)$ is an analytic norm increasing operator (see \cite[Theorem 3.6]{R1}) and the reproducing kernel for $\mathscr D(\mu)$ is normalized, by Lemma \ref{expansion-pd}, there exists a positive semi-definite kernel $\eta : \mathbb D \times \mathbb D \rar \mathbb C$ such that 
\beq \label{Richter}
\eta(z, 0) =0, \quad \kappa(z, w) = \frac{1- \eta(z, w)}{1-z\overline{w}}, \quad z, w \in \mathbb D.
\eeq
By \cite[Theorems 5.1 and 4.4]{C}, the reproducing kernel $\kappa(z, w)$ for $\mathscr D(\mu)$ is given by
\beq
\label{C-kernel}
\kappa(z, w) = \frac{O(z)\overline{O(w)}}{1-z\overline{w}} + \sum_{j=1}^k f_j(z)\beta_j(w),  \quad z, w \in \mathbb D,
\eeq
where $O, f_j, \beta_j$ are given by
\beqn
&& O(z) = \frac{p(z)}{q(z)}, \quad f_j(z) = \frac{O(z)}{O'(\zeta_j)(z-\zeta_j)}, \\ && 
\begin{pmatrix}
 \overline{\beta_1(w)}  \\
\vdots \\ 
\overline{\beta_k(w)}
\end{pmatrix}  = \Big( (\!(\inp{f_i}{f_j})\!)_{1 \leqslant i, j \leqslant k}\Big)^{-1} \begin{pmatrix}
 f_1(w)  \\
\vdots \\ 
f_k(w)
\end{pmatrix}.
\eeqn
Letting $(\!(b_{ij})\!)_{1 \leqslant i, j \leqslant k}$ denote the inverse of $\!(\inp{f_i}{f_j})\!)_{1 \leqslant i, j \leqslant k},$ we obtain
\beqn
\beta_j(w) = \sum_{i=1}^k \overline{b}_{ji} \, \overline{f_i(w)}, \quad j=1, \ldots, k.
\eeqn
This combined with \eqref{Richter} and \eqref{C-kernel} 
\beqn 
 \frac{1- \eta(z, w)}{1-z\overline{w}} 
&=& \frac{1}{1-z\overline{w}} \frac{p(z)\overline{p(w)}}{q(z)\overline{q(w)}} + \sum_{i, j=1}^k \overline{b}_{ji}  f_j(z)   \overline{f_i(w)} \\
&=& \frac{p(z)\overline{p(w)}}{q(z)\overline{q(w)}} \Big(\frac{1}{1-z\overline{w}}  + \sum_{i, j=1}^k \frac{\overline{b}_{ji}}{O'(\zeta_j)\overline{O'(\zeta_i)}}  \frac{1}{(z-\zeta_j)(\overline{w}-\overline{\zeta_i})} \Big).
\eeqn
This yields
\beqn 
&& q(z) \,\eta(z, w) \,\overline{q(w)} \\ &=& \! q(z)\overline{q(w)} - p(z)\overline{p(w)} \Big(1+ (1-z \overline{w}) \sum_{i, j=1}^k \frac{\overline{b}_{ji}}{O'(\zeta_j)\overline{O'(\zeta_i)}}  \frac{1}{(z-\zeta_j)(\overline{w}-\overline{\zeta_i})} \Big). \notag
\eeqn
Since the expression on the right hand side is a polynomial in $z$ and $\overline{w},$ 
there exists a matrix $\tilde{A}=(\!(a_{ij})\!)_{0 \leqslant i, j \leqslant k}$ such that
\beqn
q(z) \,\eta(z, w) \,\overline{q(w)} = \sum_{i, j=0}^k a_{ij} z^i \overline{w}^j, \quad z, w \in \mathbb D.
\eeqn
As $\eta(z, 0)=0$ for $z \in \mathbb D,$ 
\beqn
q(z) \,\eta(z, w) \,\overline{q(w)} = \sum_{i, j=1}^k a_{ij} z^i \overline{w}^j=\inp{AX(z)}{X(w)}, \quad z, w \in \mathbb D,
\eeqn
where $A$ is the $k \times k$ matrix obtained from $\tilde{A}$ by removing first row and first column.
Further, since $q(z) \,\eta(z, w) \, \overline{q(w)}$ is a positive semi-definite kernel,  the matrix $A$ is positive semi-definite (see Proposition \ref{pd-k-m}). By Cholesky's decomposition (see \cite[Pg 2]{B}), there exists a $k \times k$ upper triangular matrix $P$ such that $A=P^*P.$  
It follows that 
\beqn 
q(z)\, \eta(z, w) \, \overline{q(w)} &=& \notag \inp{AX(z)}{X(w)} \\ \notag
&=&  \inp{PX(z)}{PX(w)} \\ \notag 
&=& \notag 
\sum_{j=1}^k \inp{PX(z)}{e_j} \overline{\inp{PX(w)}{e_j}} \\
&=&
\sum_{j=1}^k p_j(z)\overline{p_j(w)}.
\eeqn
This shows that 
\beqn
\eta(z, w) = \sum_{j=1}^k \frac{p_j(z)}{q(z)}\frac{\overline{p_j(w)}}{\overline{q(w)}}, \quad z, w \in \mathbb D.
\eeqn
This completes the proof.
\end{proof}
\begin{remark} \label{ortho-rmk}
As noticed in the above proof,  
\beq \label{ortho-eqn}
&& \sum_{j=1}^k p_j(z)\overline{p_j(w)} \\ &=& q(z)\overline{q(w)} - p(z)\overline{p(w)} \Big(1+ \sum_{i, j=1}^k \frac{\overline{b}_{ji}}{O'(\zeta_j)\overline{O'(\zeta_i)}}  \frac{1-z \overline{w}}{(z-\zeta_j)(\overline{w}-\overline{\zeta_i})} \Big), \notag
\eeq
where the matrix $(\!(b_{ij})\!)_{1 \leqslant i, j \leqslant k}$ is the inverse of $(\!(\inp{f_i}{f_j}_{\mathscr D(\mu)})\!)_{1 \leqslant i, j \leqslant k}$ and $f_j(z) = \frac{O(z)}{O'(\zeta_j)(z-\zeta_j)},$ $j=1, \ldots, k.$ Thus, for $1 \leqslant r \neq t \leqslant k,$ we have
$\sum_{j=1}^k p_j(\alpha_r) \overline{p_j(\alpha_t)} =0$ if and only if 
\beqn \label{ortho-fact}
\sum_{i, j=1}^k \frac{\overline{b}_{ji}}{O'(\zeta_j)\overline{O'(\zeta_i)}}  \frac{1}{(\alpha_r-\zeta_j)(\overline{\alpha_t}-\overline{\zeta_i})} =\frac{1}{\alpha_r \overline{\alpha_t}-1}.
\eeqn
Note further that 
by \cite[Lemmas 4.2 and 4.3]{C}, $\inp{f_i}{f_j}$ is given by 
\beqn
\inp{f_i}{f_j} = \begin{cases} c_i \zeta_i f'_i(\zeta_i) & \mbox{if~} i=j, \\
\frac{1}{O'(\zeta_i)\overline{O'(\zeta_j)}(1-\zeta_i\overline{\zeta_j})} & \mbox{otherwise}.
\end{cases}
\eeqn
\end{remark}


\section{Dirichlet-type spaces associated with measures supported at antipodal points}

In this section, we apply the results obtained in the previous sections to solve affirmatively the Cauchy dual subnormality problem for 
Dirichlet-type spaces associated with measures supported at antipodal points.
We begin with a fact which allows us to reduce the above problem to the case in which the antipodal points are $1$ and $-1.$

\begin{proposition} \label{mu-zeta}
Let $\mu$ be a finite positive Borel measure on the unit circle $\mathbb T.$  For $\zeta \in \mathbb T,$ let $\mu_{\zeta}$ be the finite positive Borel measure defined by $\mu_{\zeta}(\Delta) = \mu(\zeta \Delta)$ for every Borel subset $\Delta$ of $\mathbb T.$ Let $\mathscr M_{z, \zeta}$ denote the operator of multiplication by $z$ on the Dirichlet-type space $\mathscr D(\mu_\zeta).$ Then $\mathscr M_{z, \zeta}$ is unitarily equivalent to $\overline{\zeta} \mathscr M_{z, 1}.$ In particular,  we have the following:
\begin{enumerate}
\item[(i)] The Cauchy dual operator $\mathscr M'_{z, \zeta}$ of $\mathscr M_{z, \zeta}$ is subnormal if and only if $\mathscr M'_{z, 1}$ is subnormal.
\item[(ii)] If $\kappa$ is the reproducing kernel for $\mathscr D(\mu),$ then the reproducing kernel $\kappa_\zeta$ for $\mathscr D(\mu_\zeta)$ is given by
\beqn
\kappa_\zeta(z, w) = \kappa(\zeta z, \zeta w), \quad z, w \in \mathbb D.
\eeqn
\end{enumerate}
\end{proposition}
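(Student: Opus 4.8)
The core of the argument is to produce an explicit unitary $U : \mathscr D(\mu_\zeta) \to \mathscr D(\mu)$ intertwining $\mathscr M_{z,\zeta}$ with $\overline\zeta\, \mathscr M_{z,1}$ — or, more precisely, to realize the rotation $f(z) \mapsto f(\zeta z)$ as a unitary between the two Dirichlet-type spaces. First I would recall the defining seminorm: by the change of variables $w = \zeta z$ in the area integral, together with $P_{\mu_\zeta}(z) = P_\mu(\zeta z)$ (which follows immediately from the definition of $\mu_\zeta$ and the rotation-invariance of the kernel $\frac{1-|z|^2}{|z-\eta|^2}$ under $z \mapsto \zeta z$, $\eta \mapsto \zeta\eta$), one checks that $\int_{\mathbb D} |(f\circ R_\zeta)'(z)|^2 P_{\mu_\zeta}(z)\,dA(z) = \int_{\mathbb D}|f'(w)|^2 P_\mu(w)\,dA(w)$, where $R_\zeta(z) = \zeta z$. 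Hence $U : f \mapsto f\circ R_\zeta$ maps $\mathscr D(\mu)$ isometrically onto $\mathscr D(\mu_\zeta)$; since $R_\zeta$ is invertible, $U$ is a surjective isometry, i.e.\ unitary.

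$\phantom{a}$Next I would compute the intertwining relation. For $f \in \mathscr D(\mu_\zeta)$ we have $(U^{-1}f)(w) = f(\overline\zeta w)$, and a direct computation gives $U \mathscr M_{z,1} U^{-1} f (z) = U(\, w \mapsto w f(\overline\zeta w)\,)(z) = \zeta z\, f(z) = \zeta\,\mathscr M_{z,\zeta}f(z)$, so $U\mathscr M_{z,1}U^{-1} = \zeta\,\mathscr M_{z,\zeta}$, equivalently $\mathscr M_{z,\zeta} = \overline\zeta\, U\mathscr M_{z,1}U^{-1}$. This is the asserted unitary equivalence. Part (i) is then immediate: subnormality is preserved under unitary equivalence and under multiplication by a unimodular scalar (if $N$ is a normal extension of $\mathscr M'_{z,1}$ then $\overline\zeta N$ is a normal extension of $\overline\zeta\mathscr M'_{z,1}$, and the Cauchy dual of $\overline\zeta\, UTU^{-1}$ is $\overline\zeta\, UT'U^{-1}$ since $(\overline\zeta UTU^{-1})^*(\overline\zeta UTU^{-1}) = U T^*T U^{-1}$, so the Cauchy dual transforms the same way as $\mathscr M_z$ itself).

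$\phantom{a}$For part (ii), I would use the standard transformation law of reproducing kernels under a unitary composition operator: if $U : \mathscr H_\kappa \to \mathscr H_{\kappa_\zeta}$ is the unitary $f \mapsto f\circ R_\zeta$ between reproducing kernel Hilbert spaces of holomorphic functions on $\mathbb D$, then $\kappa_\zeta(z,w) = \kappa(R_\zeta(z), R_\zeta(w)) = \kappa(\zeta z, \zeta w)$. Concretely, one verifies $U^*(\kappa_\zeta(\cdot, w)) = \kappa(\cdot, \zeta w)$ by checking $\langle f, \kappa(\cdot,\zeta w)\rangle_{\mathscr D(\mu)} = f(\zeta w) = (Uf)(w) = \langle Uf, \kappa_\zeta(\cdot,w)\rangle_{\mathscr D(\mu_\zeta)} = \langle f, U^*\kappa_\zeta(\cdot,w)\rangle$ for all $f$; then $\kappa_\zeta(z,w) = \langle \kappa_\zeta(\cdot,w), \kappa_\zeta(\cdot,z)\rangle = \langle U^*\kappa_\zeta(\cdot,w), U^*\kappa_\zeta(\cdot,z)\rangle = \langle \kappa(\cdot,\zeta w), \kappa(\cdot,\zeta z)\rangle = \kappa(\zeta z, \zeta w)$.

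$\phantom{a}$The only genuinely delicate point is the very first one: justifying $P_{\mu_\zeta}(z) = P_\mu(\zeta z)$ and the change of variables in the Dirichlet integral carefully, including the degenerate-mass/endpoint behavior so that the equality of the two $\mathcal O(\mathbb D)$-function spaces (not merely their seminorms) holds, and confirming that both spaces contain the constants with the same norm normalization so that $U$ is genuinely unitary and not just a seminorm isometry. Everything after that is routine bookkeeping with the intertwining relation and the kernel transformation law.
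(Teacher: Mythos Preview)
Your proposal is correct and follows essentially the same route as the paper: establish $P_{\mu_\zeta}(z)=P_\mu(\zeta z)$ via rotation-invariance of the Poisson kernel, use the area change of variables to see that the rotation $f\mapsto f(\zeta\,\cdot)$ is a surjective isometry between $\mathscr D(\mu)$ and $\mathscr D(\mu_\zeta)$, read off the intertwining $\mathscr M_{z,\zeta}\cong \overline{\zeta}\,\mathscr M_{z,1}$, and then deduce (i) and (ii) formally. Your ``delicate point'' is not actually delicate: the full norm on $\mathscr D(\mu)$ is $\|f\|_{H^2}^2+\int_{\mathbb D}|f'|^2 P_\mu\,dA$, and the $H^2$ part is manifestly rotation-invariant, so the map is a genuine unitary (not merely a seminorm isometry) without any extra work on constants or normalizations.
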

\begin{proof}
Note that for any $z \in \mathbb D,$ by the $\mathbb T$-invariance of the Poisson kernel $P(z, \lambda) = \frac{1-|z|^2}{|z-\lambda|^2}$, $z \in \mathbb D,$ $\lambda \in \mathbb T,$ we get 
\beqn
P_{\mu_\zeta}(z) = \int_\mathbb T P(z, \lambda)d\mu_\zeta(\lambda) 
= \int_\mathbb T P(\zeta z, \zeta \lambda)d\mu(\zeta \lambda) 
= P_\mu(\zeta z).
\eeqn
It now follows from the rotation invariance of the area measure that for any $f \in \mathscr D(\mu_\zeta),$
\beqn
\|f\|^2_{\mathscr D(\mu_\zeta)} &=& \|f\|^2_{H^2(\mathbb D)} + \int_\mathbb D |f'(z)|^2 P_\mu(\zeta z)dA(z) \\
&=& \|f\|^2_{H^2(\mathbb D)} + \int_\mathbb D |f'(\overline{\zeta}z)|^2 P_\mu(z)dA(z) \\
&=& \|f_{\overline{\zeta}}\|^2_{\mathscr D(\mu)},
\eeqn
where $f_{\overline{\zeta}}(z)= f(\overline{\zeta}z),$ $z \in \mathbb D.$ This yields the unitary $U_\zeta : f \mapsto f_{\overline{\zeta}}$ from $\mathscr D(\mu_\zeta)$ onto $\mathscr D(\mu).$ It is easy to verify that $U_\zeta \mathscr M_{z, \zeta} = \overline{\zeta} \mathscr M_{z, 1} U_\zeta.$ Part (i) now follows from the definition of the Cauchy dual operator and the fact that a scalar multiple of a subnormal operator is again subnormal. Since 
$U_{\zeta}$ is a unitary, $\kappa(\zeta z, \zeta w),$ $z, w \in \mathbb D,$ is easily seen to be a reproducing kernel for $\mathscr D(\mu_\zeta),$ and hence 
by the uniqueness of the reproducing kernel, part (ii) follows.
\end{proof}

To complete the proof of Theorem \ref{rank-2-zeta}, in view of Proposition \ref{mu-zeta}, one may focus on the antipodal points $1$ and $-1.$ 
\begin{example} \label{anti-exam}
Consider the Dirichlet-type space $\mathscr D(\mu)$ associated with the measure $\mu=c_1 \delta_{1} + c_2 \delta_{-1},$ where $c_1, c_2$ are positive numbers. 
We already noted that the multiplication operator $\mathscr M_z$ is a cyclic $2$-isometry. Moreover, the range of $\mathscr M^*_z \mathscr M_z-I$ is of dimension $2$ (see Lemma \ref{lem-finite-rank-rep}(i) and \eqref{range-defect}).  
By Theorem \ref{Diri-de B}, there exist $\alpha_1, \alpha_2 \in \mathbb C \setminus \overline{\mathbb D}$ (unique up to permutation) and $\gamma > 0$ such that 
\beq \label{Riesz-id}
|z-1|^2|z+1|^2 + c_1 |z+1|^2 + c_2|z-1|^2 = \gamma |z-\alpha_1|^2 |z-\alpha_2|^2, ~~ z \in \mathbb T.
\eeq
A routine verification shows that the above values of $\gamma, \alpha_1, \alpha_2$ satisfy \eqref{Riesz-id}:\footnote{To see this, one may try to solve the above identity for real numbers $\alpha_1,$ $\alpha_2$ and $\gamma$ assuming that $\alpha_1 > 1$ and $\alpha_2 < -1,$ and 
evaluating \eqref{Riesz-id} at $z=\pm 1, i,$}
\beq \label{gamma-alpha-1}
\gamma &=& \notag \frac{(\sqrt{4 + c^2_+} - c_+)^2}{4} = -\frac{1}{\alpha_1 \alpha_2},  \\
\alpha_1 &=& \notag \frac{(c_- + \sqrt{4 + c^2_-})(c_+ + \sqrt{4 + c^2_+})}{4}, \\ 
\alpha_2 &=& \frac{(c_- - \sqrt{4 + c^2_-})(c_+ + \sqrt{4 + c^2_+})}{4},
\eeq
where $c_+ =\sqrt{c_1}+\sqrt{c_2}$ and $c_{-} =\sqrt{c_1}-\sqrt{c_2}.$
By Proposition \ref{Diri-de B}, $D(\mu)$ coincides with the de Branges-Rovnyak space $\mathcal H(B)$ with $B=(b_1, b_2)$ and $b_j=\frac{p_j}{(z-\alpha_1)(z-\alpha_2)}$ for some degree $2$ polynomials $p_1$ and $p_2.$  
We contend that 
\beq \label{sum-pj-rank2}
\sum_{j=1}^2 p_j(\alpha_1) \overline{p_j(\alpha_2)} =0. 
\eeq
In view of Remark \ref{ortho-rmk}, it suffices to check 
\beq \label{ortho-fact-new}
\sum_{i, j=1}^2 \frac{\overline{b}_{ji}}{O'(\zeta_j)\overline{O'(\zeta_i)}}  \frac{1}{(\alpha_1-\zeta_j)({\alpha_2}-\overline{\zeta_i})} =\frac{1}{\alpha_1 {\alpha_2}-1},
\eeq
where $\zeta_1=1,$ $\zeta_2=-1.$  
After some routine calculations using Remark \ref{ortho-rmk}, we obtain
\beqn \label{clumsy}
&& O(z) = \frac{1}{\sqrt{\gamma}} \frac{z^2-1}{(z-\alpha_1)(z-\alpha_2)}, \quad
 O'(-1) = \frac{1}{\sqrt{c_2}}, \quad  O'(1) =-\frac{1}{\sqrt{c_1}}, \notag
\\
&& f_1(z) = -\frac{\sqrt{c_1}}{\sqrt{\gamma}} \frac{z+1}{(z-\alpha_1)(z-\alpha_2)}, \quad f_2(z) = \frac{\sqrt{c_2}}{\sqrt{\gamma}}  \frac{z-1}{(z-\alpha_1)(z-\alpha_2)}, \\
&& f'_1(1) = \frac{-3 +\alpha_1 + \alpha_2 + \alpha_1 \alpha_2}{2(1-\alpha_1)(1-\alpha_2)}, \quad
 f'_2(-1) = \frac{3 +\alpha_1 + \alpha_2 - \alpha_1 \alpha_2}{2(1+\alpha_1)(1+\alpha_2)},  \quad \quad 
\\ \notag
&& (\!(b_{ij})\!)_{1 \leqslant i, j \leqslant 2} = \frac{1}{-c_1c_2(f'_2(-1) f'_1(1) + \frac{1}{4})}\begin{pmatrix}
-c_2 f'_2(-1)  &  \frac{1}{2} \sqrt{c_1}\sqrt{c_2}\\
\frac{1}{2} \sqrt{c_1}\sqrt{c_2} &  c_1 f'_1(1)
\end{pmatrix}.
\eeqn

To verify \eqref{ortho-fact-new}, note that
\beqn
&& \sum_{i, j=1}^2 \frac{\overline{b}_{ji}}{O'(\zeta_j)\overline{O'(\zeta_i)}}  \frac{1}{(\alpha_1-\zeta_j)(\overline{\alpha_2}-\overline{\zeta_i})} \\
&=& \frac{1}{-(f'_2(-1) f'_1(1) + \frac{1}{4})} \Big( \frac{ -f'_2(-1) }{(\alpha_1-1)(\alpha_2-1)}  -  \frac{1}{2(\alpha_1-1)(\alpha_2+1)} \\
&-&   \frac{1}{2(\alpha_1+1)(\alpha_2-1)} +  \frac{f'_1(1)}{(\alpha_1+1)(\alpha_2+1)}\Big) \\
&=& \frac{2
 }{(f'_2(-1) f'_1(1) + \frac{1}{4})(\alpha^2_1-1)(\alpha^2_2-1)}. 
\eeqn
Since
$f'_2(-1) f'_1(1) + \frac{1}{4} =  \frac{-2(1 - \alpha_1 \alpha_2)}{(\alpha^2_1-1)(\alpha^2_2-1)},$
we obtain 
\beqn
\sum_{i, j=1}^2 \frac{\overline{b}_{ji}}{O'(\zeta_j)\overline{O'(\zeta_i)}}  \frac{1}{(\alpha_1-\zeta_j)(\overline{\alpha_2}-\overline{\zeta_i})}   = \frac{1}{\alpha_1\alpha_2-1}.
\eeqn
Thus the claim stands verified. This completes the verification of \eqref{sum-pj-rank2}. Hence, by Corollary \ref{coro-p-zero}, the Cauchy dual $\mathscr M'_z$ of $\mathscr M_z$ is subnormal. \eop 
\end{example}

We are now in a position to complete the proof of Theorem \ref{rank-2-zeta}.
\begin{proof}[Proof of Theorem \ref{rank-2-zeta}]
If $c_1 =0$ and $c_2=0,$ then $\mathscr M_z$ is an isometry and hence it is a subnormal contraction. If either $c_1$ or $c_2$ is $0,$ then the conclusion follows from Corollary \ref{point-one}. The case in which $c_1$ and $c_2$ are positive follows from Proposition \ref{mu-zeta} and Example \ref{anti-exam}.
\end{proof}


We conclude the paper with a modest generalization of \cite[Proposition 2]{Sa}  (cf. \cite[Theorem 3.1]{CGR} and \cite[Example 11.1]{LGR}).
\begin{proposition} 
\label{Sarason-2}
For positive scalars $c_1, c_2,$ consider the positive Borel measure $\mu = c_1 \delta_{1} + c_2 \delta_{-1}.$  
Then
the Dirichlet-type space
$D(\mu)$ coincides with the de Branges-Rovnyak space $\mathcal H(B)$ with $B=(b_1, b_2)$ and $b_j=\frac{p_j}{q},$ where $$p_1(z) = \gamma_1 z + \gamma_2 z^2, \quad p_2(z)=\gamma_3 z^2, \quad q(z) =  (z-\alpha_1)(z-\alpha_2), \quad z \in \mathbb D$$ 
for scalars $\alpha_i$ as given by \eqref{gamma-alpha-1}, and $\gamma_j$ are given by
\beq \label{gamma's}
\gamma_1  &=& \sqrt{(\alpha_1 + \alpha_2)^2 + 
\frac{\alpha_1 \alpha_2(1-\alpha^2_1)(1-\alpha^2_2)}{(-1+\alpha_1\alpha_2)}} \\ \notag
\gamma_2   &=& -\frac{1}{\overline{\gamma_1}}(\alpha_1 +\alpha_2)\Big(1 - \frac{\alpha_1 \alpha_2(-2 + \alpha_1 \alpha_2)}{2(-1+\alpha_1\alpha_2)} \Big) \\ \notag 
 \gamma_3  
&=& \sqrt{1 -  \frac{\alpha_1 \alpha_2(3 -\alpha_1 \alpha_2 - \alpha^2_1 - \alpha^2_2)}{-1+\alpha_1\alpha_2} - |\gamma_2|^2}
\eeq
\end{proposition}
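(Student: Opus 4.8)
The plan is to prove Proposition~\ref{Sarason-2} by turning the symbol produced by Theorem~\ref{Diri-de B} into closed form, reusing the quantities already computed in Example~\ref{anti-exam}. Applying Theorem~\ref{Diri-de B} with $k=2$, $\zeta_1=1$, $\zeta_2=-1$ gives that $D(\mu)$ coincides, with equality of norms, with $\mathcal H(B)$ for $B=(p_1/q,p_2/q)$, $q(z)=(z-\alpha_1)(z-\alpha_2)$ with $\alpha_1,\alpha_2$ the real numbers of \eqref{gamma-alpha-1}, and $p_j(z)=\langle PX(z),e_j\rangle$, $X(z)=(z,z^2)^T$, where $P$ is the upper triangular matrix with $P^*P=A$ and $A=(\!(a_{ij})\!)_{1\leq i,j\leq 2}$ is the positive semi-definite matrix characterised by $q(z)\,\eta(z,w)\,\overline{q(w)}=\langle AX(z),X(w)\rangle$; here $\eta$ is the kernel with $\kappa_{D(\mu)}=(1-\eta)/(1-z\overline w)$. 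Writing $P=\begin{pmatrix}\gamma_1&\gamma_2\\0&\gamma_3\end{pmatrix}$, we get $PX(z)=(\gamma_1 z+\gamma_2 z^2,\ \gamma_3 z^2)^T$, so that $p_1(z)=\gamma_1 z+\gamma_2 z^2$ and $p_2(z)=\gamma_3 z^2$ have exactly the shape asserted in the statement, and the identity $P^*P=A$ forces $\gamma_1=\sqrt{a_{11}}$, $\gamma_2=a_{12}/\overline{\gamma_1}$, $\gamma_3=\sqrt{a_{22}-|\gamma_2|^2}$. This is a bona fide Cholesky decomposition because $\mathcal H(B)$ has rank $2$ by Theorem~\ref{lem-finite-rank-rep}(i) and the remark following it, hence $A$ is positive definite, so $a_{11}>0$ and $a_{11}a_{22}-|a_{12}|^2>0$ and the square roots in \eqref{gamma's} are legitimate. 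Thus the whole task reduces to computing the three numbers $a_{11},a_{12},a_{22}$.

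For that I will use the identity \eqref{ortho-eqn} from Remark~\ref{ortho-rmk}, which expresses $\sum_{j=1}^2 p_j(z)\overline{p_j(w)}=q(z)\,\eta(z,w)\,\overline{q(w)}$ through $q$, through $p(z)$ with $p(z)\overline{p(w)}=\tfrac1\gamma(z-1)(z+1)(\overline w-1)(\overline w+1)$ and $\tfrac1\gamma=-\alpha_1\alpha_2$ by \eqref{gamma-alpha-1}, through $O'(\pm1)$, and through the inverse Gram matrix $(\!(b_{ij})\!)$ --- all of which are displayed explicitly in Example~\ref{anti-exam}. The key point is that every summand in \eqref{ortho-eqn} carries the factor $\dfrac{1-z\overline w}{(z-\zeta_j)(\overline w-\overline{\zeta_i})}$ with $\zeta_j,\zeta_i\in\{1,-1\}$, so multiplication by $p(z)\overline{p(w)}$ cancels all denominators; thus the right-hand side of \eqref{ortho-eqn} becomes, after expansion, a concrete polynomial in $z$ and $\overline w$ of bidegree $(2,2)$, from which $a_{11},a_{12},a_{22}$ are read off as the coefficients of $z\overline w$, $z\overline w^2$ and $z^2\overline w^2$. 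Simplifying with the aid of the relation $f'_2(-1)f'_1(1)+\tfrac14=\dfrac{-2(1-\alpha_1\alpha_2)}{(\alpha_1^2-1)(\alpha_2^2-1)}$ noted in Example~\ref{anti-exam}, the computation should produce
\begin{align*}
a_{11}&=(\alpha_1+\alpha_2)^2+\frac{\alpha_1\alpha_2(1-\alpha_1^2)(1-\alpha_2^2)}{-1+\alpha_1\alpha_2},\\
a_{12}&=-(\alpha_1+\alpha_2)\Big(1-\frac{\alpha_1\alpha_2(-2+\alpha_1\alpha_2)}{2(-1+\alpha_1\alpha_2)}\Big),\\
a_{22}&=1-\frac{\alpha_1\alpha_2(3-\alpha_1\alpha_2-\alpha_1^2-\alpha_2^2)}{-1+\alpha_1\alpha_2},
\end{align*}
and substituting these into $\gamma_1=\sqrt{a_{11}}$, $\gamma_2=a_{12}/\overline{\gamma_1}$, $\gamma_3=\sqrt{a_{22}-|\gamma_2|^2}$ would give \eqref{gamma's} verbatim, completing the proof.

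The only non-routine part is this last coefficient extraction: forming the partial fractions, clearing the denominators, collecting the four monomial coefficients, and reducing the resulting rational functions of $\alpha_1,\alpha_2$ to the stated closed forms. I expect the main obstacle to be simply keeping the bookkeeping of cancellations under control; a convenient internal check along the way is that the $z^0$- and $\overline w^0$-coefficients of the expanded right-hand side of \eqref{ortho-eqn} must vanish, this being the constraint $\eta(z,0)=0$ built into Theorem~\ref{Diri-de B}. A slightly more self-contained alternative would bypass Example~\ref{anti-exam} altogether by computing the reproducing kernel of $D(\mu)$ directly from the Gram data $\langle z^m,z^n\rangle_{D(\mu)}=\delta_{mn}+\bigl(c_1+c_2(-1)^{m+n}\bigr)\min\{m,n\}$ and recovering $\eta$ via $\eta=z\overline w\,\kappa-\kappa+1$, but the route above is shorter given what is already in place.
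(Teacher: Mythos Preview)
Your proposal is correct and follows essentially the same route as the paper: invoke Theorem~\ref{Diri-de B} to obtain the upper-triangular Cholesky factor $P$ (hence the form of $p_1,p_2$), expand the identity \eqref{ortho-eqn} using the explicit data from Example~\ref{anti-exam}, compare coefficients of $z\overline w$, $z^2\overline w$, $z^2\overline w^2$, and simplify via $f'_2(-1)f'_1(1)+\tfrac14=\dfrac{2(\alpha_1\alpha_2-1)}{(1-\alpha_1^2)(1-\alpha_2^2)}$ together with $\gamma=-1/(\alpha_1\alpha_2)$. The paper additionally records the auxiliary identities $f'_1(1)+f'_2(-1)=\dfrac{2(\alpha_1\alpha_2-1)(\alpha_1+\alpha_2)}{(1-\alpha_1^2)(1-\alpha_2^2)}$ and $f'_2(-1)-f'_1(1)+1=\dfrac{2(2-\alpha_1^2-\alpha_2^2)}{(1-\alpha_1^2)(1-\alpha_2^2)}$, which are exactly what you will need to close the bookkeeping for $a_{12}$ and $a_{22}$.
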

\begin{proof}
By Proposition \ref{Diri-de B}, one may choose $p_1$ and $p_2$ of the form $$p_1(z)=\gamma_1 z + \gamma_2 z^2, \quad p_2(z)=\gamma_3 z^2,$$ which satisfy \eqref{ortho-eqn}. We now compute $\gamma_1, \gamma_2, \gamma_3.$ By \eqref{ortho-eqn}, we obtain
\beqn
&& |\gamma_1|^2 z\overline{w} + \gamma_2 \overline{\gamma_1} z^2 \overline{w}  + \gamma_1 \overline{\gamma_2} z \overline{w}^2 + (|\gamma_2|^2 + |\gamma_3|^2) z^2\overline{w}^2 
\\
&=&
\sum_{j=1}^2 p_j(z)\overline{p_j(w)} \\ &=& q(z)\overline{q(w)} - p(z)\overline{p(w)} \Big(1+ \sum_{i, j=1}^2 \frac{\overline{b}_{ji}}{O'(\zeta_j)\overline{O'(\zeta_i)}}  \frac{1-z \overline{w}}{(z-\zeta_j)(\overline{w}-\overline{\zeta_i})} \Big) \\
&=& (z-\alpha_1)(z-\alpha_2) (\overline{w}-\alpha_1)(\overline{w}-\alpha_2) - \frac{1}{\gamma} (z^2-1)(\overline{w}^2-1) \\
&-&
\frac{1}{\gamma}\frac{1-z\overline{w}}{f'_2(-1)f'_1(1)+1/4} \Big(f'_2(-1)(z+1)(\overline{w}+1) - f'_1(1)(z-1)(\overline{w}-1) \\ 
&+& \frac{1}{2} (z-1)(\overline{w}+1)  + \frac{1}{2} (z+1)(\overline{w}-1)\Big)  
\eeqn
Comparing the coefficients on both sides, we obtain
\beqn
|\gamma_1|^2  &=& (\alpha_1 + \alpha_2)^2 - \frac{2}{\gamma}\frac{1}{f'_2(-1)f'_1(1)+1/4} \\
\gamma_2 \overline{\gamma_1}  &=& -\alpha_1 -\alpha_2 + \frac{1}{\gamma}\frac{1}{f'_2(-1)f'_1(1)+1/4}\Big(f'_2(-1)+f'_1(1)\Big) \\
|\gamma_2|^2 + |\gamma_3|^2 &=&1 -\frac{1}{\gamma} + \frac{1}{\gamma}\frac{1}{f'_2(-1)f'_1(1)+1/4}\Big(f'_2(-1) - f'_1(1) + 1\Big)
\eeqn
It follows from computations of Example \ref{anti-exam} that
\beqn
f'_1(1) f'_2(-1) + 1/4 
&=& \frac{2(\alpha_1\alpha_2-1)}{(1-\alpha^2_1)(1-\alpha^2_2)}.
\eeqn
\beqn
f'_1(1) +  f'_2(-1)  
&=&  \frac{2(\alpha_1 \alpha_2-1)(\alpha_1+\alpha_2)}{(1-\alpha^2_1)(1-\alpha^2_2)} 
\eeqn
\beqn
 f'_2(-1) -f'_1(1) + 1 
&=& \frac{2(2 - \alpha^2_1 -  \alpha^2_2)}{(1-\alpha^2_1)(1-\alpha^2_2)}.
\eeqn
Since $\gamma = -\frac{1}{\alpha_1\alpha_2}$ (see \eqref{gamma-alpha-1}), it is easy to see that \eqref{gamma's} holds.
This gives the coefficients of $p_1$ and $p_2.$
\end{proof}
It is worth noticing that if $c_1= c_2,$ then $\alpha_1 + \alpha_2 =0$ (see  \eqref{gamma-alpha-1}), and hence $p_1, p_2$ and $q$ takes the form
$$p_1(z) = \gamma_1 z, \quad p_2(z)=\gamma_3 z^2, \quad q(z) =  z^2-\alpha^2_1, \quad z \in \mathbb D.$$
A particular case of this has been pointed out in \cite[Example 11.1]{LGR}.

\vskip.1cm

\noindent
{\bf Acknowledgments.} We convey our sincere thanks to Shuaibing Luo and V. M. Sholapurkar for providing several inputs that helped us in improving the earlier draft of this paper. 

{}

\end{document}